\documentclass[twoside]{amsart}

\addtolength{\hoffset}{-1cm} 	
\addtolength{\textwidth}{2cm} 	
\usepackage{amsmath}
\usepackage[english]{babel}
\usepackage{amsfonts}
\usepackage{amssymb}
\usepackage{bbm}

\usepackage{hyperref}
\usepackage{xcolor}
\hypersetup{
    colorlinks,
    linkcolor={red!50!black},
    citecolor={blue!50!black},
    urlcolor={blue!80!black}
}

\usepackage{graphicx}
\usepackage[usenames,dvipsnames]{pstricks}
\usepackage{epsfig}
\usepackage{pst-grad} 
\usepackage{pst-plot} 
\usepackage{pst-tree}
\usepackage{pst-math}

\usepackage{mathtools} 
\mathtoolsset{showonlyrefs,showmanualtags} 


\usepackage{color}



\newcommand{\distr}{\mathcal{D}}				
\newcommand{\metr}{\langle\cdot|\cdot\rangle}	
\newcommand{\la}{\lambda}						
\newcommand{\al}{\alpha}						
\newcommand{\g}{\gamma}							
\newcommand{\I}{\mathbbm{1}}					
\newcommand{\R}{\mathbb{R}}						
\newcommand{\N}{\mathbb{N}}						
\newcommand{\so}{\mathfrak{so}}					
\newcommand{\Z}{\mathbb{Z}}						
\newcommand{\U}{\mathrm{U}}						
\newcommand{\Sp}{\mathrm{Sp}}					

\renewcommand{\epsilon}{\varepsilon}

\newcommand{\be}{\begin{equation}}
\newcommand{\ee}{\end{equation}}


\newtheorem{theorem}{Theorem}
\newtheorem{lemma}[theorem]{Lemma}

\newtheorem{proposition}[theorem]{Proposition}

\theoremstyle{remark} 
\newtheorem{remark}{Remark}
\newtheorem{example}{Example}

\theoremstyle{definition} 
\newtheorem{definition}[theorem]{Definition}


\DeclareMathOperator{\diag}{\mathrm{diag}}

\DeclareMathOperator{\spn}{\mathrm{span}}
\DeclareMathOperator{\rank}{\mathrm{rank}}
\DeclareMathOperator{\ISO}{\mathrm{ISO}}
\DeclareMathOperator*{\cart}{\times}



\title{How many geodesics join two points on a contact sub-Riemannian manifold?}

\author{A. Lerario}
\address{Scuola Internazionale Superiore di Studi Avanzati, via Bonomea 265, 34136 Trieste, Italy} 
\email{lerario@sissa.it}

\author{L. Rizzi}
\address{Univ. Grenoble Alpes, CNRS, Institut Fourier, F-38000 Grenoble, France \newline \emph{Former institution:} CNRS, CMAP \'Ecole Polytechnique and \'Equipe INRIA GECO Saclay \^Ile-de-France, Paris, France}
\email{luca.rizzi@univ-grenoble-alpes.fr}

\begin{document}


\begin{abstract}
We investigate the structure and the topology of the set of geodesics (critical points for the \emph{energy functional}) between two points on a contact Carnot group $G$ (or, more generally, corank-one Carnot groups). Denoting by $(x,z)\in \R^{2n}\times \R$ exponential coordinates on $G$, we find constants $C_1, C_2>0$ and $R_1, R_2$ such that the number $\hat{\nu}(p)$ of geodesics joining the origin with a generic point $p=(x,z)$ satisfies:
\begin{equation}\label{eq:boundabstract}
C_1\frac{|z|\phantom{^2}}{\|x\|^2}+R_1\leq \hat{\nu}(p)\leq C_2\frac{|z|\phantom{^2}}{\|x\|^2}+R_2.
\end{equation}
We give conditions for $p$ to be joined by a unique geodesic and we specialize our computations to standard Heisenberg groups, where $C_1=C_2=\frac{8}{\pi}$.

The set of geodesics joining the origin with $p\neq p_0$, parametrized with their initial covector, is a topological space $\Gamma(p)$, that naturally splits as the disjoint union
\begin{equation}
\Gamma(p) = \Gamma_0(p) \cup \Gamma_\infty(p),
\end{equation}
where $\Gamma_0(p)$ is a finite set of isolated geodesics, while $\Gamma_\infty(p)$ contains continuous families of non-isolated geodesics (critical \emph{manifolds} for the energy). We prove an estimate similar to~\eqref{eq:boundabstract} for the ``topology'' (i.e. the total Betti number) of $\Gamma(p)$, with no restriction on $p$.

When $G$ is the Heisenberg group, families appear if and only if $p$ is a \emph{vertical} nonzero point and each family is generated by the action of isometries on a given geodesic. Surprisingly, in more general cases, families of \emph{non-isometrically equivalent} geodesics do appear.

If the Carnot group $G$ is the \emph{nilpotent approximation} of a contact sub-Riemannian manifold $M$ at a point $p_0$, we prove that the number $\nu(p)$ of geodesics in $M$ joining $p_0$ with $p$ can be estimated from below with $\hat{\nu}(p)$. The number $\nu(p)$ estimates indeed geodesics whose image is contained in a coordinate chart around $p_0$ (we call these ``local'' geodesics).

As a corollary we prove the existence of a sequence $\{p_n\}_{n\in \mathbb{N}}$ in $M$ such that:
\begin{equation}
\lim_{n\to \infty}p_n=p_0\qquad \text{and}\qquad \lim_{n\to \infty}\nu(p_n)=\infty,
\end{equation}
i.e. the number of ``local'' geodesics between two points can be arbitrarily large, in sharp contrast with the Riemannian case.
\end{abstract}

\maketitle


\section{Introduction}\label{s:intro}

If the topology of a Riemannian manifold $M$ is ``complicated enough'' (for example if $M$ is closed) a well known theorem of J-P. Serre \cite{serre} states that there are infinitely many geodesics\footnote{In the spirit of Morse theory, we define Riemannian geodesics as locally length minimizing curves parametrized by constant speed or, equivalently, critical points for the energy functional.} between any two points in $M$. These geodesics have the property of being ``global'', in the sense that their existence is guaranteed by the global topology of the manifold.

At the opposite extreme, if the manifold $M$ is a convex neighbourhood of a point in a Riemannian manifold, the structure of geodesics resembles the Euclidean one, and between any two points there is only one geodesic.

In the contact sub-Riemannian case the \emph{global} picture is the same as of the Riemannian case. The study of geodesics that ``loop'' in the topology of the manifold was recently done by the first author and F. Boarotto in \cite{boalarry}: every two points on a compact sub-Riemannian contact manifold are joined by infinitely many geodesics (the result uses a weak homotopy equivalence between the space of all curves and the space of horizontal ones). 
On the opposite, our main interest will be in the set of ``local'' geodesics, i.e. geodesics between two points whose image is contained in a coordinate chart: here the sub-Riemannian picture is dramatically different.
To mention one example, the only geodesically convex neighborhood of the origin in the Heisenberg group (see below) is the entire group, \cite{Monti}.

In this framework we consider a constant-rank distribution $\distr \subset TM$ with the property that iterated brackets of vector fields on $\distr$ generate the tangent space (H\"ormander condition). This condition guarantees that any two points in $M$ can be joined by a Lipschitz continuous curve whose velocity is a.e. in $\distr$ (Chow-Rashevskii theorem).

If a smooth scalar product is defined on $\distr$, it makes sense to consider, for any horizontal curve $\gamma$, the norm of its velocity and the \emph{energy} of this curve is defined by:
\begin{equation} 
J(\gamma)=\frac{1}{2}\int_{I}\|\dot{\gamma}(t)\|^2dt.
\end{equation}
Sub-Riemannian \emph{geodesics} between $p_0$ and $p$ are critical points of $J$ \emph{constrained} to have endpoints $p_0$ and $p$. From now on the word geodesic will always mean sub-Riemannian geodesic.

\begin{example}[Heisenberg]
The \emph{Heisenberg group} $\mathbb{H}_3$ is the smooth manifold $\R^3$ with coordinates $(x_1,x_2,z)$ and the distribution:
\begin{equation}\label{eq:deltah}
\distr=\spn\left\{\frac{\partial}{\partial x_1}-\frac{x_2}{2}\frac{\partial}{\partial z}, \frac{\partial}{\partial x_2}+\frac{x_1}{2}\frac{\partial}{\partial z}\right\}.
\end{equation}
The sub-Riemannian structure is given by declaring the above vector fields an orthonormal basis.

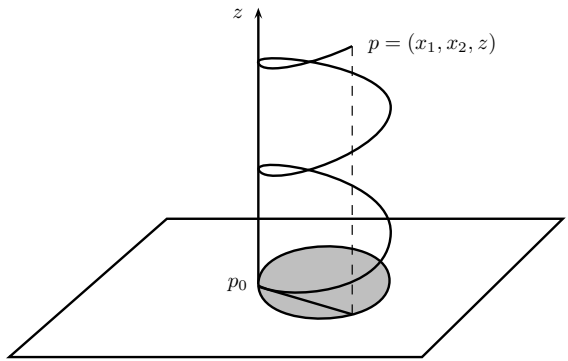
\begin{figure}
\centering
\scalebox{0.8} 
{
\begin{pspicture}(1,-3)(10.2,3.1091993)
\pspolygon[linewidth=0.04](1.0,-2.8)(3.62,-0.5)(10.2,-0.5)(7.86,-2.8)
\psline[linewidth=0.04cm,arrowsize=0.05291667cm 2.0,arrowlength=1.4,arrowinset=0.4]{->}(5.14,-1.5891992)(5.14,3.0108008)
\psbezier[fillstyle=solid,fillcolor=lightgray,linewidth=0.04](5.14,-1.5891992)(5.14,-2.3891993)(7.32,-2.3291993)(7.32,-1.5291992)(7.32,-0.72919923)(5.14,-0.78919923)(5.14,-1.5891992)
\psbezier[linewidth=0.04](5.14,-1.6091992)(5.58,-1.8691993)(7.301063,-1.7484409)(7.34,-0.7491992)(7.378937,0.25004244)(4.92,0.59080076)(5.16,0.29080078)(5.4,-0.009199219)(7.3390436,0.63254535)(7.34,1.3508008)(7.3409567,2.0690563)(5.02,2.3108008)(5.14,2.0908008)(5.22,1.8708007)(6.12,2.0908008)(6.7,2.3708007)
\psline[linewidth=0.02cm,linestyle=dashed,dash=0.16cm 0.16cm](6.7,2.3708007)(6.7,-2.0891993)
\psline[linewidth=0.04cm](5.14,-1.6291993)(6.7,-2.0891993)
\rput(4.8,2.9158008){$z$}
\usefont{T1}{ptm}{m}{n}
\rput(8.031455,2.4158008){$p=(x_1, x_2, z)$}
\rput[c](4.8,-1.5891992){$p_0$}
\end{pspicture} 
}
\caption{Geodesics in the Heisenberg group.}\label{fig:heis1}
\end{figure}
Let $p_0=(0,0,0)$ be the origin. Geodesics are curves whose projection on the $(x_1,x_2)$-plane is an arc of a circle (possibly with infinite radius, i.e. a segment on a straight line); the signed area swept out on the circle equals the $z$-coordinate of the final point $p$.

If $p$ belongs to the $(x_1,x_2)$-plane there is only one geodesic joining it with the origin (this is precisely the segment trough $p_0$ and $p$); if $p$ has both nonzero components in the $(x_1,x_2)$ plane and the $z$ axis, the number of geodesics is finite; finally, if $p$ belongs to the $z$-axis there are infinitely many geodesics.
In the latter case, given one geodesic, we obtain infinitely many others (a \emph{continuous family}) by composing it with a rotation around the $z$-axis (see Fig. \ref{fig:heis1}).
\end{example}

In the general sub-Riemannian case a Sard's like argument for the sub-Riemannian exponential map guarantees that for the generic choice of the two points geodesics are isolated, but finiteness is  more delicate. 
The following fact is proved in \cite[Prop. 7]{AGL}, but to the authors' knowledge the general question seems to be an open problem.

\begin{proposition}
Let $M$ be a step-two Carnot group such that $\emph{\textrm{rk}}(\distr)>\frac{1}{2}\dim(M)$. Then for the generic choice of $p_0$ and $p$ the number of geodesics between them is finite.
\end{proposition}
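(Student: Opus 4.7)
\emph{Proof plan.} The first step is to identify geodesics with preimages of the sub-Riemannian exponential map $\EXP\colon T^*_{p_0}G\to G$: in a step-two Carnot group every geodesic from $p_0$ is realized as a normal extremal and is therefore parametrized by its initial covector, so the number of geodesics from $p_0$ to $p$ equals $\#\EXP^{-1}(p)$. The strategy is then to compute $\EXP$ explicitly, use the rank hypothesis to reduce the counting to a real-analytic square system, and combine Sard's theorem with a growth estimate to conclude finiteness.

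For the explicit computation, set $k=\rank(\distr)$ and $n=\dim(G)$, take exponential coordinates $(x,z)\in\R^{k}\times\R^{n-k}$ on $G$, and write $(u,v)$ for the dual covector coordinates. Integrating the normal Hamiltonian system, $v$ is conserved while $u$ evolves under $\dot u=H(v)u$ with $H(v)\in\so(k)$ linear in $v$, yielding
\[
x=A(v)u,\qquad z=\tfrac{1}{2}u^{T}B(v)u,
\]
where $A(v)=(e^{H(v)}-I)/H(v)$ and $B(v)$ is an $(n-k)$-tuple of $k\times k$ matrices, both real-analytic in $v$. Off the resonance locus $\mathcal{R}=\{\det A(v)=0\}$ the first block inverts to $u=A(v)^{-1}x$, and the second becomes a real-analytic system
\[
F(v;x)=z,\qquad F\colon\R^{n-k}\setminus\mathcal{R}\to\R^{n-k}.
\]
The hypothesis $\rank(\distr)>\tfrac{1}{2}\dim(G)$, that is $n-k<k$, enters precisely here: it lets the horizontal block be inverted first and leaves a \emph{square} real-analytic system in the vertical variable.

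Sard's theorem applied to this square system shows that $F^{-1}(z)$ is discrete for a full-measure set of targets. To upgrade discrete to finite I would establish a properness estimate along rays: for $w\in\R^{n-k}$ outside the measure-zero set where $H(w)$ degenerates, $A(tw)^{-1}$ grows linearly in $t$, so $u=A(tw)^{-1}x$ grows linearly and $F(tw;x)$ grows like $t^{2}\to\infty$. This confines $F^{-1}(z)$ to a bounded subset of $\R^{n-k}\setminus\mathcal{R}$, and discrete plus bounded is finite. The main obstacle is controlling the resonant locus $\mathcal{R}$, where $A(v)$ drops rank and continuous families of geodesics can appear (as already happens at vertical points in the Heisenberg group): one must stratify $\mathcal{R}$ and show that the image under $\EXP$ of the closure of each stratum has measure zero in $G$, so that the generic target $p$ avoids them.
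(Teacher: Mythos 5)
First, a point of comparison: the paper does not prove this proposition at all — it is quoted from \cite[Prop.~7]{AGL} — so your plan can only be measured against the corank-one machinery the paper does develop (Prop.~\ref{propo:exp}, Prop.~\ref{g}, Thm.~\ref{thm:upper}) and against what a complete argument would require. Your skeleton (normal parametrization, explicit exponential map, elimination of $u$, Sard for discreteness, properness for boundedness) is the natural one, but the two steps carrying all the difficulty are exactly the ones left vague, and your plan never genuinely uses the hypothesis $\mathrm{rk}(\distr)>\tfrac12\dim M$. The role you assign to it is spurious: after inverting $A(v)$ off $\mathcal{R}$, the vertical equation $F(v;x)=z$ is an $(n-k)\times(n-k)$ system in $v$ for \emph{any} corank, so squareness is free. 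Since the paper explicitly says the arbitrary-corank question appears to be open, an argument that nowhere consumes the rank hypothesis cannot be complete. Concretely, the gap is the properness estimate. Your ray-wise growth is justified ``outside the measure-zero set where $H(w)$ degenerates'', but that set need not be small: for $k$ odd (e.g.\ rank $3$, corank $2$, which satisfies the hypothesis) $H(w)$ is singular for \emph{every} $w$, and the same happens whenever the structure matrices share a kernel vector (e.g.\ $\R\times\mathbb{H}_3$). Moreover, divergence along each fixed ray does not bound the solution set, because the constants degenerate as the direction $v/|v|$ approaches the bad directions; what is needed is a bound uniform in the direction, of the type $|F(v;x)|\geq c(x)|v|-C(x)$ for generic $x$ — the higher-corank analogue of Prop.~\ref{g}(4) as exploited in Thm.~\ref{thm:upper} — and establishing such a bound for generic $x$ is precisely where the assumption corank $<$ rank has to act. (Minor: the growth is linear, not quadratic, since the vertical quadratic form $B(v)$ scales like $|v|^{-1}$.)

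The second gap is the passage ``discrete plus bounded is finite''. Your solution set lives in the open set $\R^{n-k}\setminus\mathcal{R}$, and a bounded discrete subset of an open set can be infinite, accumulating on $\mathcal{R}$. The remedy you propose — stratify $\mathcal{R}$ and show that the exponential image of each stratum has measure zero — only excludes geodesics whose covector lies \emph{on} $\mathcal{R}$; it says nothing about a sequence $v_m\to\bar v\in\mathcal{R}$ with $u_m=A(v_m)^{-1}x$ unbounded and $F(v_m;x)=z$ for all $m$, which is the actual accumulation scenario. Ruling it out requires showing that, for generic $x$, $|F(v;x)|\to\infty$ as $v$ approaches a resonance at which $A(v)^{-1}x$ blows up; in corank one this is the divergence of $g$ at its poles whenever the corresponding $x_j\neq 0$, but in corank $\geq 2$ it is a genuine quantitative statement about the vertical quadratic forms restricted to $\ker A(\bar v)$ that must be proved, and it again interacts with the genericity of $x$. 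Finally, the identification of geodesics with fibers of the normal exponential map in a step-two group deserves at least a sentence (the Goh condition excludes strictly abnormal minimizers in step two); it is standard, but not free. As it stands, your plan reproduces the easy part (Sard and discreteness) and leaves unproved exactly the two points where the result could fail and where the hypothesis must intervene.
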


The goal of this paper is to make the above picture quantitative, at least in the case of contact\footnote{We stress here that all our results remain true with almost no modification for more general \emph{corank-one} sub-Riemannian structure. For simplicity we restrict our exposition to the contact case.} sub-Riemannian manifolds, addressing the following question:

\medskip
\begin{center}
\emph{``How many geodesics join two points on a contact sub-Riemannian manifold?''}
\end{center}
\medskip

A contact sub-Riemannian manifold is the simplest example of nonholonomic geometry. From the point of view of differential geometry it consists of a $(2n+1)$-dimensional, connected manifold $M$ together with a distribution $\distr\subset TM$ of hyperplanes locally defined as the kernel of a one-form $\alpha$ (the \emph{contact form}) such that the restriction $d\alpha|_{\distr}$ is non-degenerate. The sub-Riemannian structure is given by assigning a smooth metric on the hyperplane distribution.
The non-degeneracy condition implies H\"ormander's condition.

\begin{example}[Heisenberg, continuation] The Heisenberg group is a contact manifold with contact form $\alpha=-dz+\frac{1}{2}\left(x_1dx_2-x_2dx_1\right)$. As we will show later:
\begin{equation}\label{heisbound}
\#\{\textrm{geodesics between the origin and $p=(x_1, x_2, z)$}\}=\frac{8}{\pi}\frac{|z|\phantom{^2}}{\|x\|^2}+O(1). 
\end{equation}
In particular when $p$ is ``vertical'', $p=(0,0,z)$ the number of geodesics is infinite; otherwise it is finite and equals the r.h.s. (the $O(1)$ notation means ``up to a bounded error'').
\end{example}

For any point $p_0 \in M$ one can consider the so-called \emph{nilpotent approximation} of the sub-Riemannian structure at $p_0$. The result of this construction (that depends only on the germ of the structure at $p_0$) is a sub-Riemannian manifold $G_{p_0}$, and is an example of a \emph{Carnot group}. 

Thm.~\ref{t:limitintro} states that the geodesic count on the Carnot group $G_{p_0}$ controls the geodesic count on the original manifold $M$. For this reason, we start our analysis with the study of \emph{contact Carnot groups}, namely Carnot groups arising as the nilpotent approximation of contact manifolds.

\subsection{Contact Carnot groups}

A contact Carnot group is a connected, simply connected Lie group $G$, with $\dim G = 2n+1$, such that its Lie algebra $\mathfrak{g}$ of left-invariant vector fields admits a nilpotent stratification of step $2$, namely:
\begin{equation}
\mathfrak{g} = \mathfrak{g}_1 \oplus \mathfrak{g}_2, \qquad \mathfrak{g}_1,\mathfrak{g}_2 \neq \{0\},
\end{equation}
where $\dim\mathfrak{g}_2 = 1$ and
\begin{equation}
[\mathfrak{g}_1,\mathfrak{g}_1] = \mathfrak{g}_{2} \qquad \text{and}\qquad [\mathfrak{g}_1,\mathfrak{g}_2]=[\mathfrak{g}_2,\mathfrak{g}_2]=\{0\}.
\end{equation}
A scalar product is defined on $\mathfrak{g}_1$, by declaring a set $f_1,\ldots,f_{2n} \in \mathfrak{g}_1$ to be a global orthonormal frame. The group exponential map:
\begin{equation}
\mathrm{exp}_{G} : \mathfrak{g} \to G,
\end{equation}
associates with $v \in \mathfrak{g}$ the element $\gamma(1)$, where $\gamma: [0,1] \to G$ is the unique integral line of the vector field defined by $v$ such that $\gamma(0) = 0$. Since $G$ is simply connected and $\mathfrak{g}$ is nilpotent, $\mathrm{exp}_G$ is a smooth diffeomorphism.
The choice of an orthonormal frame $f_1,\ldots,f_{2n} \in \mathfrak{g}_1$ and $f_0 \in \mathfrak{g}_2$ defines \emph{exponential coordinates} $(x,z) \in \R^{2n}\times \R$ on $G$ such that $p =(x,z)$ if and only if
\begin{equation}
p = \mathrm{exp}_{G} \left(\sum_{i=1}^{2n} x_i f_i + z f_0\right).
\end{equation}
For any such a choice there exists a skew-symmetric matrix $A \in \so(2n)$ such that
\begin{equation}
[f_i,f_j] = A_{ij} f_0.
\end{equation}
For contact Carnot groups $A$ is non-degenerate. We denote by:
\begin{equation}
\al_1< \cdots <\al_k\in \mathbb{R}_+
\end{equation}
the distinct singular values of $A$ and $n_j$ their multiplicities. Let $x_j \in \R^{2n_j}$ be the projections of $x$ on the invariant subspaces associated with $\al_j$. Accordingly we write $p=(x_1,\ldots,x_k,z)$. 
\begin{example}\label{ex:heisenberg}
A classical example is the $(2n+1)$-dimensional Heisenberg group $\mathbb{H}_{2n+1}$. This is the case with $k=1$, i.e. a unique singular value $\alpha_1 = 1$ with multiplicity $n$. In this case, for $i=1,\ldots,n$
\begin{equation}
f_{i} := \frac{\partial}{\partial x_i} - \frac{1}{2}x_{i+n} \frac{\partial}{\partial z}, \qquad f_{n+i} := \frac{\partial}{\partial x_{n+i}} + \frac{1}{2}x_{i} \frac{\partial}{\partial z}, \qquad
f_0 := \frac{\partial}{\partial z},
\end{equation}
and $A$ is the standard symplectic matrix $J = \left(\begin{smallmatrix} 0 & \I_n \\ -\I_n & 0 \end{smallmatrix}\right)$.
\end{example}

The geodesic count for $G$ can be made quite explicit in term of the exponential coordinates of $p$ and the singular values of the matrix $A$. Define for this purpose the ``counting'' function:
\begin{equation}
\hat{\nu}(p)=\#\{\textrm{geodesics in a Carnot group between the origin and $p$}\},
\end{equation}
where, by convention, the ``hat'' stresses the fact that we refer to a Carnot group. We have the following estimates for $\hat{\nu}(p)$ (see Thms.~\ref{thm:upper}--\ref{thm:lower}). None of these bounds is trivial: the upper bound because the exponential map is not proper; the lower bound is in fact even more surprising, as the typical finiteness techniques from semialgebraic (semianalytic) geometry only produce upper bounds (we use indeed a kind of ``ergodicity'' property argument).
\begin{theorem}[The ``infinitesimal'' bound]\label{thm:orderintro}
Given a contact Carnot group $G$, there exist constants $C_1,C_2 >0$ and $R_1, R_2$ such that if $p=(x,z)\in G$ is a point with all components $x_j$ different from zero, then:
\begin{equation}\label{eq:orderintro} 
C_1\frac{|z|\phantom{^2}}{\|x\|^2}+R_1\leq \hat{\nu}(p)\leq C_2\frac{|z|\phantom{^2}}{\|x\|^2}+R_2.
\end{equation}
\end{theorem}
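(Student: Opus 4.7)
The plan is to make the sub-Riemannian exponential map of the contact Carnot group $G$ completely explicit and reduce the counting of preimages of $p$ to a level-set problem for a one-variable function. Applying the Pontryagin Maximum Principle to the normal Hamiltonian $H=\tfrac{1}{2}\sum_{i=1}^{2n}h_i^2$, and using that $f_0$ is central (so $w:=h_0$ is conserved along the flow), the horizontal momenta satisfy $\dot h=wAh$. Writing $\xi:=h(0)\in\R^{2n}$, integration of the equations for $x$ and $z$ gives, in exponential coordinates,
\begin{equation}
x(w,\xi)=\frac{e^{wA}-I}{wA}\,\xi, \qquad z(w,\xi)=\text{an explicit quadratic form in $\xi$, depending smoothly on $w$.}
\end{equation}
Decomposing $\R^{2n}=\bigoplus_{j=1}^{k}V_j$ along the invariant subspaces on which $A$ acts with singular value $\alpha_j$, these formulas decouple block by block. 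Fixing $p=(x,z)$ with each $x_j\neq 0$, the $x$-equation uniquely determines $\xi$ as a function of $w$ outside the discrete set $S=\{w:w\alpha_j\in 2\pi\Z\setminus\{0\}\text{ for some }j\}$, and substitution reduces the counting of geodesics to counting solutions $w\in\R\setminus S$ of the scalar equation
\begin{equation}
F_x(w)=z, \qquad F_x(w)=\sum_{j=1}^{k}\|x_j\|^2\,g_j(w),
\end{equation}
where each $g_j$ is, up to the rescaling $w\mapsto w\alpha_j$, the same fixed explicit trigonometric rational function with $g_j(0)=0$, sign-definite simple poles at $2\pi\Z\setminus\{0\}/\alpha_j$, and linear growth on average.

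For the upper bound, the explicit form of $g_j$ on a period yields a bounded number of critical points, so any given level is attained only boundedly often on a pole-free interval of $F_x$. The set $S\cap[-W,W]$ has cardinality $\sim \frac{W}{\pi}\sum_j\alpha_j$, and the residue at each pole is proportional to the corresponding $\|x_j\|^2$. Hence the level $z$ is reached only where $|w|\lesssim |z|/\|x\|^2$, and counting poles in this window yields $\hat\nu(p)\le C_2|z|/\|x\|^2+R_2$ with constants extracted from the Laurent coefficients of the $g_j$'s at their poles. The specialization to the Heisenberg case (single singular value $\alpha=1$) gives the exact value $C_2=8/\pi$ announced in \eqref{heisbound}.

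The lower bound is the more delicate step, and is where the ``ergodicity'' idea mentioned in the introduction enters. Near each pole $w^{*}$ arising from block $j_0$, the term $\|x_{j_0}\|^2 g_{j_0}(w)$ tends to $\pm\infty$ with a fixed sign on \emph{both} sides of $w^{*}$, while the remaining summands stay bounded \emph{provided $w^{*}$ is not too close to poles of other $g_j$'s}. When the $\alpha_j$'s are rationally independent, Weyl's equidistribution theorem, applied to the torus-valued sequence $\{w^{*}(\alpha_1,\ldots,\alpha_k)\pmod{2\pi}\}_{w^{*}\in S}$, shows that a positive density of poles is isolated in this quantitative sense; on such intervals $F_x$ traverses all of $\R$, so the intermediate value theorem yields a solution of $F_x(w)=z$ on a positive-density subset of pole intervals inside $|w|\lesssim |z|/\|x\|^2$. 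This produces the lower bound $\hat\nu(p)\ge C_1|z|/\|x\|^2+R_1$. The rationally commensurable case is handled by direct periodicity.

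The main technical obstacle is to make the positive density of ``isolated'' poles \emph{uniform} in $p$, so that the additive term $R_1$ does not depend on the fine arithmetic of the singular values $\alpha_j$. This demands a quantitative, non-asymptotic form of the equidistribution (or periodicity) step, together with a uniform lower bound for the amplitude of the excursions of $F_x$ between consecutive elements of $S$. Once these uniform estimates are in place, both bounds follow from the same geometric picture: the graph of $F_x$ oscillates between $\pm\infty$ on pole intervals whose density is controlled by the $\alpha_j$'s, with amplitudes controlled by $\|x\|^2$, so the number of preimages of any level $z$ is commensurate with $|z|/\|x\|^2$.
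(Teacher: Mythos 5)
Your setup and upper bound follow the paper's route: compute the exponential map explicitly, observe that for $p$ with all $x_j\neq 0$ the covector is determined by $\lambda$ away from the poles, and reduce the count to $\#\{\lambda \mid z=G(\lambda)\}$ with $G(\lambda)=\sum_j \alpha_j g(\lambda\alpha_j)\|x_j\|^2$, $g(\lambda)=\tfrac18\tfrac{\lambda-\sin\lambda}{\sin^2(\lambda/2)}$. Two inaccuracies there: the poles of $g$ are of order two, not simple, and the localization of solutions to $|\lambda|\lesssim |z|/\|x\|^2$ does not come from ``residues proportional to $\|x_j\|^2$'' (near a pole $G$ blows up, so that is irrelevant); it comes from the pointwise bound $g(|\lambda|)>\tfrac{|\lambda|}{8}-\tfrac{\pi}{8}$, i.e. the inter-pole minima of each branch grow linearly with slope controlled by $\|x\|^2$. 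With that fix, convexity of $G$ on each pole-free interval gives at most two solutions per branch, and counting poles in the window reproduces the paper's upper bound with $C_2=\tfrac{8k}{\pi}\tfrac{\alpha_k}{\alpha_1^2}$.

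The lower bound, which is the genuinely delicate half, is not established by your argument, and two of its intermediate claims are false. First, near an ``isolated'' pole coming from block $j_0$ the remaining summands do \emph{not} stay bounded: every $g(\alpha_j\lambda)$ grows at least linearly in $\lambda$, and this linear growth is precisely the quantity that determines how many branches can contribute and hence the constant $C_1$. Second, for $z,\lambda>0$ one has $F_x>0$, so $F_x$ never ``traverses all of $\R$''; the intermediate value theorem produces a crossing on a branch only if $F_x$ dips \emph{below} $z$ somewhere on that branch, and this is not implied by the pole being isolated — it requires a quantitative upper bound for $F_x$ at points a definite distance from \emph{all} poles, of the form $\sum_j g(\alpha_j\hat\lambda_n)\leq c_k(\delta)n+d_k(\delta)$. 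This is exactly the estimate you defer (``once these uniform estimates are in place''), so the proof is a program rather than a proof; moreover the Weyl-equidistribution/periodicity dichotomy is both heavier than needed and incomplete (the case of partially commensurable $\alpha_j$ is not covered, and equidistribution only yields asymptotic density). The paper replaces all of this by a short deterministic averaging lemma: if $0<\delta<\bigl(\sum_{j}\tfrac{\alpha_1}{\alpha_j}\bigl\lfloor\tfrac{\alpha_j}{\alpha_1}\bigr\rfloor\bigr)^{-1}$, then \emph{every} interval $[\tfrac{2n\pi}{\alpha_1},\tfrac{2(n+1)\pi}{\alpha_1}]$ contains a point lying at distance at least $\tfrac{\delta\pi}{\alpha_j}$ from every pole of every $g(\alpha_j\cdot)$ (proved by comparing the total length of the bad sets with the length of the interval), at which the linear bound above holds; strict convexity of $G$ between consecutive poles then gives two solutions for each such point with $G(\hat\lambda_n)<z$, yielding $C_1=\tfrac{8}{\pi}\tfrac{\alpha_1}{\alpha_k^2}\sin^2\bigl(\tfrac{\delta\pi}{2}\bigr)$ for all contact Carnot groups at once, with no arithmetic case distinction.
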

In fact $C_1,C_2$ (resp. $R_1,R_2$) are homogeneous of degree $-1$ (resp. $0$) in the singular values $\alpha_1<\cdots<\alpha_k$ of $A$ and are given by:
\begin{equation}\label{eq:numerical}
C_1=\frac{8}{\pi}\frac{\alpha_1}{\alpha_k^2} \sin\left(\frac{\delta \pi}{2}\right)^2\qquad\text{with}\qquad \delta = \left(\sum_{j=1}^k \frac{\alpha_1}{\alpha_j} \left\lfloor \frac{\alpha_j}{\alpha_1}\right\rfloor\right)^{-1} \qquad \text{and}\qquad C_2=\frac{8k}{\pi} \frac{\al_k}{\al_1^2}.
\end{equation}

\begin{remark}
For any other choice of $f_1',\ldots,f_{2n}' \in \mathfrak{g}_1$ (orthonormal) and a complement $f_0' \in \mathfrak{g}_2$ there exists a matrix $M \in \mathrm{O}(2n)$ and a constant $c$ such that:
\begin{equation}
f_i = \sum_{j=1}^{2n} M_{ij} f_j', \qquad f_0 = c f_0'.
\end{equation}
Indeed this new choice defines a new skew-symmetric matrix $A'$ and also new exponential coordinates $(x',z')$. One can easily check that: 
\begin{equation}
A' = cM^*AM, \qquad x' = M^*x, \qquad z' = c z.
\end{equation}
Since $C_1, C_2$ are homogeneous functions of degree $-1$ in the singular values of $A$, the upper and lower bounds~\eqref{eq:orderintro} are invariant w.r.t. different choices of exponential coordinates.
\end{remark}

\begin{example}[Heisenberg, continuation] In the Heisenberg group $\mathbb{H}_{2n+1}$ there is only one singular value $\al =1$, with multiplicity $n$. By using \eqref{eq:orderintro} and \eqref{eq:numerical} one obtains:
\begin{equation}
C_1=C_2=\frac{8}{\pi},
\end{equation}
recovering~\eqref{heisbound} (that holds true for any Heisenberg group, not just the three-dimensional one).
\end{example}


An interesting related question is to determine the set of points $p$ such that $\hat{\nu}(p)=1$ (as it happens for example if $p = (x,0)$, i.e. $p$ is horizontal). In the Heisenberg group:
\begin{equation}
\hat{\nu}(p)=1\iff \frac{|z|\phantom{^2}}{\|x\|^2}\leq \frac{\la_1}{4}\approx 1.12335,
\end{equation}
where $\la_1$ is the first positive solution of $\tan \la=\la$; in the general case we have the following.

\begin{proposition}\label{p:onei}
Let $G$ be a contact Carnot group and $p=(x,z)$ such that:
\begin{equation}\label{eq:onei}
|z|<\frac{\pi}{8}\left(\frac{2\al_1^2}{\al_k}-\al_k\right)\|x\|^2.
\end{equation}
Then there is only one geodesic from $p_0$ to $p$.
\end{proposition}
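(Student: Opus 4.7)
My plan is to reduce the question to a scalar equation on the initial vertical covector. Any normal geodesic emanating from the origin of $G$ is determined by an initial covector $(u,w)\in \R^{2n}\times\R$; by the Pontryagin maximum principle, the endpoint $(x(1),z(1))$ can be written as an explicit integral depending on $(u,w)$ and the matrix $A$. Using the block decomposition of $A$ associated with the singular values $\al_1<\cdots<\al_k$, for $w$ outside the resonant set $\Sigma=\bigcup_j\frac{2\pi}{\al_j}\Z^*$ the horizontal equation $x(1;u,w)=x$ admits a unique solution $u=u(w,x)$. Plugging this back in, the vertical coordinate becomes a scalar analytic function
\begin{equation}
\zeta_x(w)=\sum_{j=1}^{k}\|x_j\|^{2}\,g_{\al_j}(w),
\end{equation}
where each $g_\al$ is a universal odd meromorphic function whose first poles (as $w>0$ increases from $0$) are at $w=\pi/\al$. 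Counting geodesics from $p_0$ to $p=(x,z)$ is then equivalent to counting solutions of $\zeta_x(w)=z$.

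The central step is to prove that $\zeta_x$ is strictly monotonic on the interval $I=(-\pi/\al_k,\pi/\al_k)$. Each $g_{\al_j}$ is regular on $I$ (since $\al_j\leq\al_k$), vanishes only at $w=0$ there, and is strictly increasing with $g_{\al_j}(w)$ of the same sign as $w$. Consequently $\zeta_x$ is an odd strictly monotonic function on $I$, whose range is an open interval $(-L(x),L(x))$ for some $L(x)>0$ that I would compute as a limit from the behavior of $g_{\al_j}$ at the endpoints of $I$. For any $z$ with $|z|<L(x)$, there is exactly one $w_*\in I$ with $\zeta_x(w_*)=z$, yielding one geodesic.

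The final step is to show that no geodesic exists with initial covector in $|w|\geq\pi/\al_k$ when $|z|$ lies below the bound~\eqref{eq:onei}. This amounts to establishing the lower estimate $|\zeta_x(w)|>\frac{\pi}{8}\bigl(\tfrac{2\al_1^{2}}{\al_k}-\al_k\bigr)\|x\|^{2}$ for all admissible $w$ outside $I$. The key analytic input is the elementary inequality $g_\al(w)\geq \frac{\al}{\pi}(\al w)^{-1}\cdot\frac{\pi^{2}}{4}$-type estimate in a neighbourhood of the first pole, which, combined with the fact that all other blocks evaluated at the pole retain a controlled sign, yields the claimed bound after balancing the extremal contribution of the $\al_1$-block against the singular contribution of the $\al_k$-block. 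The main obstacle I anticipate is precisely this sign-control: between successive poles the different $g_{\al_j}$ oscillate and may partially cancel, so one must either (i) treat the worst-case configuration explicitly (which is what the factor $2\al_1^{2}/\al_k-\al_k$ reflects --- note the condition~\eqref{eq:onei} is nontrivial only when $\al_k<\sqrt 2\,\al_1$), or (ii) exploit the monotonicity of $|g_{\al_j}|$ between each successive pair of zeros together with a pigeonhole-type argument on the first pole encountered. Either way, combining this lower estimate with the monotonicity on $I$ completes the proof.
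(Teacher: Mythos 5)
Your reduction to a scalar equation $\zeta_x(w)=z$ in the vertical covector, together with strict monotonicity on the central interval, is the right skeleton and matches the paper's machinery (by Prop.~\ref{t:fiber}, any geodesic with non-resonant $\la$ satisfies $z=G_0(\la)=\sum_{j\notin I_0}\al_j g(\la\al_j)\|x_j\|^2$, and $G_0$ is odd and strictly increasing before its first pole). The gap is that the step which actually produces the constant $\frac{\pi}{8}\bigl(\frac{2\al_1^2}{\al_k}-\al_k\bigr)$ --- excluding solutions beyond the central interval --- is not proved, and the route you sketch for it would not work. The difficulty you anticipate (sign oscillation and cancellation among the blocks, to be handled by worst-case analysis or pigeonhole) does not exist: $g(\la)=\frac18\frac{\la-\sin\la}{\sin^2(\la/2)}$ is odd, strictly positive for $\la>0$, and has its only zero at $\la=0$, so for $w>0$ every term $\al_j g(\al_j w)\|x_j\|^2$ is nonnegative and nothing cancels. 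What is actually needed is a lower bound valid for \emph{all} $w$ past the first pole, because $\zeta_x$ dips to local minima between consecutive poles; your proposed estimate, of the shape $\mathrm{const}/w$ and valid only ``in a neighbourhood of the first pole'', is decreasing in $w$ and local, so it cannot control those later valleys. The input the paper uses is the global linear bound $g(|\la|)>\tfrac{|\la|}{8}-\tfrac{\pi}{8}$ (property 4 of Prop.~\ref{g}), which for $\la \geq 2\pi/\al_k$ gives
\begin{equation}
G_0(\la)>\frac{\la}{8}\sum_{j\notin I_0}\al_j^2\|x_j\|^2-\frac{\pi}{8}\sum_{j\notin I_0}\al_j\|x_j\|^2\;\geq\;\frac{\pi}{8}\Bigl(\frac{2\al_1^2}{\al_k}-\al_k\Bigr)\|x\|^2>|z|,
\end{equation}
and by oddness the same holds for $\la\leq-2\pi/\al_k$ with reversed sign; this excludes all solutions outside the central interval and, evaluated at the right endpoint, also shows $|z|$ lies in the range of $G_0$ there, giving existence of exactly one solution.

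Two further concrete problems. First, your pole bookkeeping is inconsistent: you take the resonant set to be $\bigcup_j\frac{2\pi}{\al_j}\Z\setminus\{0\}$ but place the first pole of $g_{\al_j}$ at $\pi/\al_j$; in the correct normalization the first pole is at $2\pi/\al_k$, and this factor of two is precisely what turns the vacuous constant $\frac{\al_1^2}{\al_k}-\al_k\leq 0$ into $\frac{2\al_1^2}{\al_k}-\al_k$, so the constant in~\eqref{eq:onei} cannot be recovered from your interval $(-\pi/\al_k,\pi/\al_k)$. Second, the hypothesis only forces $x\neq 0$, not all $x_j\neq 0$; when some $x_j=0$ there may be geodesics with resonant covector $\la\in\frac{2\pi}{\al_j}\Z\setminus\{0\}$ --- the continuous families $\Gamma_\infty(p)$ of Thm.~\ref{thm:disjoint} --- which are not captured by your scalar equation, since there the horizontal equation does not determine $u_j$. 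They must be excluded separately: this again follows from the displayed estimate (it forces $\Lambda_p=\emptyset$), but your argument as written is silent on them.
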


\subsection{Critical manifolds}
It is interesting to discuss the structure of \emph{all} geodesics ending at $p$, including the case when the point $p$ belongs to a hyperplane coordinate space (i.e. $x_j=0$ for some $j$), which was excluded from Thm. \ref{thm:orderintro}. We still exclude the case $p=p_0$, as for the case of Carnot groups there is only one geodesic: the trivial one $\gamma(t)\equiv p_0$.

Sub-Riemannian geodesics starting from $p_0$ are parametrized by their initial covector $\eta \in T^*_{p_0}M$. The subset $\Gamma(p)$ of geodesics ending at $p$ has the subset topology from $T_{p_0}^*M$. We have the following characterization (see Thm.~\ref{thm:disjoint}).

\begin{theorem}[Topology of critical manifolds]\label{t:topologyintro}Let $G$ be a contact Carnot group.
The set $\Gamma(p)$ of geodesics ending at $p\neq p_0$ can be decomposed into the disjoint union of two closed submanifolds:
\begin{equation}
\Gamma(p)=\Gamma_0(p)\cup\Gamma_\infty(p).
\end{equation}
The set $\Gamma_0(p)$ is finite and the set $\Gamma_\infty(p)$ is homeomorphic to a union of spheres.
Moreover the energy function $J$ is constant on each component of $\Gamma(p)$.
\end{theorem}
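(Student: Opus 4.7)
The strategy is to work with the explicit exponential map on the contact Carnot group $G$. Applying the Pontryagin Maximum Principle to the sub-Riemannian Hamiltonian $H=\tfrac12\sum_{i=1}^{2n}\langle\eta,f_i\rangle^2$ and integrating the resulting system, each geodesic from $p_0=0$ is parametrized by its initial covector $\eta=(u,w)\in\R^{2n}\times\R$, with $u_i=\langle\eta,f_i\rangle$ and $w=\langle\eta,f_0\rangle$. One then finds $u(t)=e^{twA}u$, so $J(\gamma)=\tfrac12|u|^2$ is conserved along $\gamma$, and the endpoint map takes the form
\begin{equation}
x=\Phi(w)u,\qquad z=Q(w,u),
\end{equation}
with $\Phi(w):=\int_0^1 e^{swA}\,ds$ and $Q(w,\cdot)$ an explicit quadratic form in $u$ obtained by integrating $\dot z=\tfrac12\langle x(t),A\dot x(t)\rangle$ along the geodesic.

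The natural decomposition is driven by the resonant set $\mathcal R:=\{w\in\R:\ker\Phi(w)\neq 0\}$, equivalently the set of $w$'s such that $w\alpha_j\in 2\pi\Z\setminus\{0\}$ for some singular value $\alpha_j$ of $A$; this is a closed, discrete subset of $\R$. Set
\begin{equation}
\Gamma_0(p):=\{(u,w)\in\Gamma(p):w\notin\mathcal R\},\qquad \Gamma_\infty(p):=\{(u,w)\in\Gamma(p):w\in\mathcal R\}.
\end{equation}
The disjointness is automatic; closedness of both pieces will follow once $\Gamma_0(p)$ is proven finite (and hence closed in the ambient Hausdorff space $T^*_{p_0}G$).

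For $\Gamma_0(p)$, invertibility of $\Phi(w)$ off $\mathcal R$ forces $u=\Phi(w)^{-1}x$, reducing the endpoint condition to the scalar equation
\begin{equation}
\tilde z(w):=Q(w,\Phi(w)^{-1}x)=z,\qquad w\in\R\setminus\mathcal R.
\end{equation}
The function $\tilde z$ is real analytic on each open component of $\R\setminus\mathcal R$, blows up (or extends analytically) at each $w_0\in\mathcal R$ according to whether $x$ has a nonzero projection onto $\ker\Phi(w_0)$, and satisfies $|\tilde z(w)|\to\infty$ as $|w|\to\infty$; these asymptotics force $\tilde z^{-1}(z)$ to be finite, so $\Gamma_0(p)$ is a $0$-dimensional submanifold. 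For $\Gamma_\infty(p)$, fix $w_0\in\mathcal R$ and split orthogonally $\R^{2n}=\ker\Phi(w_0)\oplus\ker\Phi(w_0)^\perp$, writing $u=v+u'$. The horizontal equation $\Phi(w_0)u=x$ forces $x\in\ker\Phi(w_0)^\perp$ and uniquely determines $u'$. A direct computation, using that $A$ acts on each resonant invariant subspace as a multiple of the standard symplectic structure, shows that $v\mapsto Q(w_0,u'+v)$ restricted to $\ker\Phi(w_0)$ is a radial quadratic form of the shape $a|v|^2+b$, so the set of solutions is either empty or a Euclidean sphere $\{|v|=R\}\subset\ker\Phi(w_0)$. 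Taking the disjoint union over admissible $w_0\in\mathcal R$ then exhibits $\Gamma_\infty(p)$ as a union of spheres.

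Energy constancy is immediate: points of $\Gamma_0(p)$ are isolated, and on each sphere component of $\Gamma_\infty(p)$ one has $|u|^2=|u'|^2+R^2$ by construction, so $J=\tfrac12|u|^2$ is constant. The main technical obstacle is the finiteness of $\Gamma_0(p)$: the exponential map is not proper and, a priori, an infinite sequence of isolated critical points of $J$ could accumulate either toward $\mathcal R$ or at infinity. Ruling this out relies on the asymptotic analysis of $\tilde z$ sketched above, together with the verification that $Q(w_0,\cdot)|_{\ker\Phi(w_0)}$ is radially symmetric—an identity that in turn hinges on the special symplectic action of $A$ on each resonant invariant subspace.
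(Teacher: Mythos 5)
Your overall strategy is the same as the paper's: integrate the normal Hamiltonian system explicitly, write the endpoint map as $x=\Phi(w)u$, $z=Q(w,u)$, reduce to a scalar equation in $w$ where $\Phi(w)$ is invertible, and recognize round spheres at the resonant values of $w$. The finiteness argument (analyticity of $\tilde z$ plus blow-up at the poles and linear growth at infinity), the radial form of $Q(w_0,\cdot)$ on $\ker\Phi(w_0)$ (coefficient $1/2w_0$ on every resonant block, no cross terms by block-diagonality of $A$), and the energy argument are all sound and match the paper's computations (Props.~\ref{propo:exp}, \ref{propo2}, \ref{t:fiber}).

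However, your choice of decomposition creates a genuine boundary-case gap. You split according to whether $w\in\mathcal R$, whereas the paper splits according to whether $\sum_{j\in I_0}\|u_j\|^2=0$, with $I_0=\{j: x_j=0\}$, and the two do not coincide. Concretely, suppose some $w_0\in\mathcal R$ is \emph{not} a pole of $\tilde z$ (i.e. $x_j=0$ for all $j\in L(w_0)$) and $z=\tilde z(w_0)$ exactly; this happens for suitable $p$, e.g. $k=2$, $\alpha_1,\alpha_2$ non-commensurable, $p=(0,x_2,z)$ with $x_2\neq 0$ and $z=\alpha_2 g(2\pi\alpha_2/\alpha_1)\|x_2\|^2$. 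Then at $w_0$ your radius satisfies $R=0$, and the fiber over $w_0$ is a single point, not a sphere $S^{2N(w_0)-1}$. With your definitions this point lies in $\Gamma_\infty(p)$, so the claim ``$\Gamma_\infty(p)$ is homeomorphic to a union of spheres'' fails for such $p$; your phrase ``either empty or a Euclidean sphere $\{|v|=R\}$'' silently includes the degenerate case $R=0$. The paper's definition of $\Gamma_0(p)$ precisely reassigns these covectors to the finite part (and its $\Lambda_p$ is defined by a strict inequality), which is the needed fix; finiteness of the enlarged $\Gamma_0(p)$ still follows from your scalar-equation count. A second, smaller point: finiteness of $\Gamma_0(p)$ gives its closedness but not closedness of the complement; in your split $\Gamma_\infty(p)=\Gamma(p)\cap(\R^{2n}\times\mathcal R)$ is closed simply because $\mathcal R$ is closed and discrete, and after the corrective reassignment one should re-check closedness of both pieces (the paper does this via the projection onto the $\lambda$-axis and discreteness of its image).
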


\begin{remark}The structure of the sets of geodesics whose final point is vertical, in the general step-two Carnot group of type $(k,n)$ is studied in \cite{AGL}. 
Geodesics to $p$ are critical points for the energy functional $J:\Omega_p\to \R$ (here $\Omega_p$ is the space of all admissible curves to $p$ and $J$ is defined as above); for the generic vertical $p$ these geodesics appear in families, which are tori of finite dimension depending on the ``multiplicity'' of the Lagrange multiplier (in particular they are never isolated and $J$ is a Morse-Bott function).
A Morse theoretical study proves that:
\begin{equation}
\#\{\text{critical manifolds of $J$ with energy less then $c$}\} \leq O(c^{n-k}).
\end{equation}
On the other hand the ``order of growth'' of the topology of $\Omega_p^c=\{\gamma\in \Omega_p\,|\, J(\gamma)\leq c\}$ (the sublevel set of the energy) is given by (here $b(X)$ denotes the \emph{total Betti number} of $X$):
\begin{equation}
b(\Omega_p^c)\leq O(c^{n-k-1}),
\end{equation}
an inequality which is stronger than the classical Morse-Bott prediction $b(\Omega_p^c)\leq O(c^{n-k})$.
\end{remark}

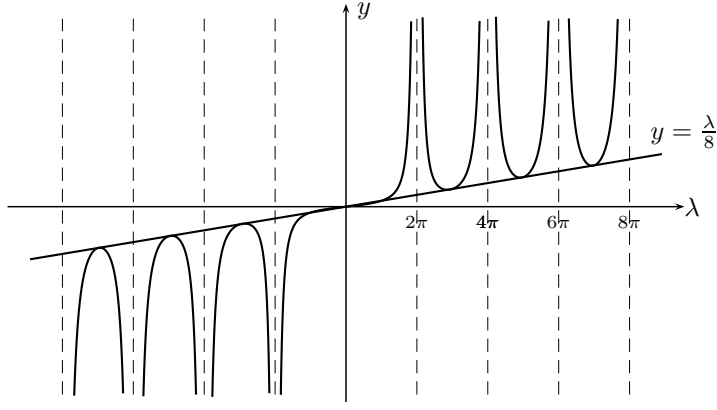
\begin{figure}[t]
\psset{algebraic,plotpoints=10000,plotstyle=line,xunit=1.5mm,yunit=1mm}
{\begin{pspicture}(-30,-26)(30,27)
\psline[linewidth=0.02cm]{->}(-30,0)(30,0)
\psline[linewidth=0.02cm]{->}(0,-26)(0,27)
\psline[linewidth=0.01cm,linestyle=dashed](6.28319,-25)(6.28319,25)
\psline[linewidth=0.01cm,linestyle=dashed](12.5664,-25)(12.5664,25)
\psline[linewidth=0.01cm,linestyle=dashed](18.8496,-25)(18.8496,25)
\psline[linewidth=0.01cm,linestyle=dashed](25.1327,-25)(25.1327,25)
\psline[linewidth=0.01cm,linestyle=dashed](-6.28319,-25)(-6.28319,25)
\psline[linewidth=0.01cm,linestyle=dashed](-12.5664,-25)(-12.5664,25)
\psline[linewidth=0.01cm,linestyle=dashed](-18.8496,-25)(-18.8496,25)
\psline[linewidth=0.01cm,linestyle=dashed](-25.1327,-25)(-25.1327,25)
\psplot[yMaxValue=25,yMinValue=-25]{-25}{25}{2/8*(x-sin(x))/(sin(x/2))^2}
\psplot[yMaxValue=25,yMinValue=-25]{-28}{28}{2/8*x}
\rput[l](27,10){$y=\frac{\lambda}{8}$}
\rput[l](30,0){$\lambda$}
\rput[l](1,26){$y$}
\rput[c](6.28,-2){\scriptsize$2\pi$}
\rput[c](12.57,-2){\scriptsize$4\pi$}
\rput[c](12.57,-2){\scriptsize$4\pi$}
\rput[c](18.85,-2){\scriptsize$6\pi$}
\rput[c](25.13,-2){\scriptsize$8\pi$}
\end{pspicture}}
\caption{The graph of $g$.}
\label{fig:g}
\end{figure}

Since geodesics in $\Gamma_0(p)$ are always finite, the preimage of a regular value of $\hat{E}$ is finite. Geodesics in $\Gamma_\infty(p)$ appear in \emph{families}. Since geodesics are critical points for the energy functional, we call each connected component of $\Gamma_\infty(p)$ a \emph{critical family} (or \emph{critical manifold}). The set $\Gamma_\infty(p)$ has the following description. Given $\alpha_1, \ldots, \alpha_k$ (the singular values of $A$) define:
\begin{equation}\label{eq:gintro}
g(\lambda)=\frac{1}{8}\frac{\lambda-\sin\lambda}{\left(\sin\frac{\lambda}{2}\right)^2},
\end{equation}
and the sets:
\begin{equation}
\Lambda_j=\frac{2\pi}{\al_j}\mathbb{Z}\setminus\{0\}, \qquad \Lambda=\bigcup_{j=1}^k\Lambda_j\qquad  \text{and}\qquad L(\la)=\{j\,|\, \la \in \Lambda_j\}.
\end{equation}
Thus $\Lambda_j$ consists of the poles of $\lambda \mapsto g(\la \al_j)$ and the set of indices $L(\la)$ tells how many of these poles occur at $\la$ (see Fig.~\ref{fig:g}). With these conventions we have:
\begin{equation}
\Gamma_\infty(p) \simeq \bigcup_{\lambda \in \Lambda_p} S^{2N(\lambda)-1}, \qquad N(\lambda)=\sum_{j \in L(\lambda)} n_j,
\end{equation} 
where $n_j$ is the multiplicity of the singular value $\alpha_j$ and
\begin{equation}
\Lambda_p=\left\{\la \in \Lambda \,\bigg|\,\left(z-\sum_{x_j \neq 0}\al_jg(\la \al_j)\|x_j\|^2\right) \la >0\right\}.
\end{equation}

For the generic $A$ all singular values are distinct ($k=n$) and non-com\-mensurable, thus for every $\la \in \Lambda_p$ we have $\#L(\la)=1$, $N(\la)=1$ and all critical manifolds are homeomorphic to circles. If some of the singular values have multiplicities greater than one, but still are all pairwise non-com\-mensurable, $\#L(\lambda) =1$ but we can have critical manifolds of various dimensions.

As we will see, $\Gamma_\infty(p)$ is not empty only if some of the coordinates $x_j$ vanish. If $\Gamma_\infty(p)$ is not empty, each critical manifold is homeomorphic to a sphere; here the estimate~\eqref{eq:orderintro} can be extended to \emph{all} points $p\neq p_0$ if one adopts a ``topological'' viewpoint. Denoting by:
\begin{equation}
\hat{\beta}(p)=\{\textrm{sum of the Betti numbers of the set of geodesics from the origin to $p$}\},
\end{equation}
we have the following generalization of Thm.~\ref{thm:orderintro}  which bounds the number of spheres in $\Gamma_\infty(p)$ (see Thms.~\ref{thm:upper}--\ref{thm:lowertop}).
\begin{theorem}[The ``infinitesimal'' bound for the topology]\label{thm:ordertopintro}Let $G$ be a contact Carnot group.
There exist constants $C_1', C_2'>0$  and $R_1', R_2'$ such that for every $p=(x,z)\in G$, \text{with} $p\neq (0,0)$:
\begin{equation}\label{eq:topintro} 
C_1'\frac{|z|\phantom{^2}}{\|x\|^2}+R_1'\leq \hat{\beta}(p)\leq C_2'\frac{|z|\phantom{^2}}{\|x\|^2}+R_2'.
\end{equation}
\end{theorem}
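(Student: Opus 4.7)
By Theorem~\ref{t:topologyintro}, $\Gamma(p) = \Gamma_0(p) \cup \Gamma_\infty(p)$ as disjoint closed submanifolds, with $\Gamma_\infty(p) \simeq \bigcup_{\lambda \in \Lambda_p} S^{2N(\lambda)-1}$. Since each odd-dimensional sphere has total Betti number $2$, we immediately get
\begin{equation}
\hat{\beta}(p) = \#\Gamma_0(p) + 2\,|\Lambda_p|.
\end{equation}
When every $x_j$ is nonzero, every $\lambda\in\Lambda$ is a pole of some \emph{active} term $\alpha_j g(\lambda\alpha_j)\|x_j\|^2$, so the sign condition in the definition of $\Lambda_p$ fails at each such $\lambda$. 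Hence $\Lambda_p=\emptyset$, $\hat{\beta}(p)=\hat{\nu}(p)$, and Theorem~\ref{thm:orderintro} yields~\eqref{eq:topintro} at once. The plan is to treat the remaining points, those with $S:=\{j:x_j=0\}\neq\emptyset$, by proving linear bounds for $\#\Gamma_0(p)$ and $|\Lambda_p|$ separately.

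Introduce the meromorphic function
\begin{equation}
F(\lambda):=\sum_{j\in S^c}\alpha_j\, g(\lambda\alpha_j)\,\|x_j\|^2,
\end{equation}
whose double poles lie on $\bigcup_{j\in S^c}\Lambda_j$ and which satisfies the affine asymptote $F(\lambda)\sim \lambda\|x\|_A^2/8$ as $|\lambda|\to\infty$ away from poles, where $\|x\|_A^2:=\sum_j\alpha_j^2\|x_j\|^2$ is comparable to $\|x\|^2$. Then $\Lambda_p\subset\bigcup_{j\in S}\Lambda_j$ is exactly the subset on which $F$ is regular and $(z-F(\lambda))\lambda>0$, while the isolated geodesics in $\Gamma_0(p)$ correspond (with bounded multiplicity from the transverse covector direction) to solutions of $z=F(\lambda)$ on the regular set of $F$.

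For the upper bound, the affine asymptotic forces $|\lambda|\leq 8|z|/\|x\|_A^2+O(1)$ whenever the sign condition holds; the density of $\bigcup_{j\in S}\Lambda_j$ (bounded by $(1/\pi)\sum_{j\in S}\alpha_j$) then gives $|\Lambda_p|\leq C\,|z|/\|x\|^2+O(1)$. The equation $F(\lambda)=z$ has at most one root on each maximal interval of monotonicity of $F$, and the number of such intervals inside $|\lambda|\leq 8|z|/\|x\|_A^2$ is again $O(|z|/\|x\|^2)$, which bounds $\#\Gamma_0(p)$. For the lower bound I would adapt the equidistribution-type argument used in the lower half of Theorem~\ref{thm:orderintro}: on a segment $|\lambda|\leq c|z|/\|x\|_A^2$ with a suitable $c>0$, a positive proportion of the inactive poles $\lambda\in\bigcup_{j\in S}\Lambda_j$ sits in the "good" region where $\mathrm{sgn}(z-F(\lambda))=\mathrm{sgn}(\lambda)$, and likewise a positive proportion of the monotonicity intervals of $F$ contains a root of $F=z$ by the intermediate value theorem. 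Summing the two linear contributions produces the lower half of~\eqref{eq:topintro}.

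The main obstacle is the lower bound in the "mixed" regime $\emptyset\neq S\subsetneq\{1,\ldots,k\}$: the active poles $\bigcup_{j\in S^c}\Lambda_j$ subdivide the real line and could a priori leave only a sparse subset of inactive poles in the "good" region, or crowd the monotonicity intervals of $F$ enough to suppress the count $\#\Gamma_0(p)$. Handling this requires extending the ergodic-type counting of Theorem~\ref{thm:orderintro} to the active subsystem indexed by $S^c$, coupled with an independent density estimate for the inactive indices in $S$, with the understanding that the arithmetic interaction of $\{\alpha_j\}_{j\in S^c}$ and $\{\alpha_j\}_{j\in S}$ may contribute an additive loss to be absorbed into the constants $R_1',R_2'$.
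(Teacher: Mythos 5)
Your upper bound follows the paper's own route (Thm.~\ref{thm:upper}): split $b(\Gamma(p))=b(\Gamma_0(p))+b(\Gamma_\infty(p))$, use the lower envelope of $g$ to confine all relevant $\lambda$ to $|\lambda|\leq 8|z|/\|x\|_A^2+O(1)$, then count poles by density and roots by convexity. Two small inaccuracies there: $F(\lambda)\sim\lambda\|x\|_A^2/8$ is not an asymptotic equivalence (near its poles $F$ is much larger); what you actually need, and what the paper uses, is the one-sided bound $g(|\lambda|)>|\lambda|/8-\pi/8$ from Prop.~\ref{g}. Also, the correspondence between $\Gamma_0(p)$ and $\{\lambda \mid z=G_0(\lambda)\}$ is an exact bijection (all $u_j$ are determined by $\lambda$ and $x$, by Prop.~\ref{t:fiber}), not merely of ``bounded multiplicity''; for the lower bound you need the surjectivity half of this, which you do not state.

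The genuine gap is the lower bound when $\emptyset\neq I_0\subsetneq\{1,\ldots,k\}$: you do not prove it, you only announce a plan (``I would adapt the equidistribution-type argument\ldots Handling this requires extending the ergodic-type counting\ldots'') and flag an ``arithmetic interaction'' between the active and inactive frequencies as the main obstacle. That obstacle is an artifact of your strategy of lower-bounding \emph{both} $\#\Gamma_0(p)$ and $|\Lambda_p|$ and summing them. The paper (Thm.~\ref{thm:lowertop}) simply discards $\Gamma_\infty(p)$: since the union is disjoint, $\hat\beta(p)\geq b(\Gamma_0(p))=\#\{\lambda \mid z=G_0(\lambda)\}$ with $G_0(\lambda)=\sum_{j\notin I_0}\alpha_j g(\alpha_j\lambda)\|x_j\|^2$, and the entire machinery of Thm.~\ref{thm:lower} (Lemmas~\ref{l:estimates}, \ref{l:claim}, \ref{l:estimates2}) applies verbatim to this reduced sum, because only the indices $j\notin I_0$ and their poles ever enter; the inactive sets $\Lambda_j$, $j\in I_0$, are not poles of $G_0$ and cannot ``crowd'' anything. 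This yields the constant with $\delta'=\bigl(\sum_{j\notin I_0}\tfrac{\alpha_1}{\alpha_j}\lfloor\tfrac{\alpha_j}{\alpha_1}\rfloor\bigr)^{-1}$, uniform over the finitely many possible $I_0$. So no new ergodic or Diophantine input is needed, and no lower bound on the number of spheres is required (the case $x=0$ being covered by the convention $|z|/0=\infty$, since then $\Lambda_p=\Lambda$ is infinite). As written, your proposal establishes the upper half of \eqref{eq:topintro} and the case where all $x_j\neq0$, but leaves the lower half unproven precisely in the regime where the theorem goes beyond Thm.~\ref{thm:orderintro}.
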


As above, $C_1',C_2'$ (resp. $R_1',R_2'$) are homogeneous of degree $-1$ (resp. $0$) in the singular values $\alpha_1<\cdots<\alpha_k$ of $A$ and are given by:
\begin{equation}
C_1'=\frac{8}{\pi} \frac{\alpha_1}{\alpha_k^2}\sin\left(\frac{\delta' \pi}{2}\right)^2\qquad\text{with}\qquad \delta' = \left(\sum_{x_j\neq 0} \frac{\alpha_1}{\alpha_j} \left\lfloor \frac{\alpha_j}{\alpha_1}\right\rfloor\right)^{-1} \qquad \text{and}\qquad C_2'=\frac{8k}{\pi} \frac{\al_k}{\al_1^2};
\end{equation}
and in particular again these upper bounds are invariant w.r.t. change of exponential coordinates.

Fig. \ref{comparison} compares the contribution to $\hat{\nu}$ and $\hat{\beta}$ coming respectively from $\Gamma_0$ and $\Gamma_\infty$. In some sense, $\hat{\beta}(p)$ counts the geodesics ``up to families''. Thus if $x \neq 0$ then geodesics might appear in families, but still the topology of these families is controlled, in particular the number of disjoint families is bounded. 

\begin{figure}[t]
\centering
\def\arraystretch{2.2}
\begin{tabular}{|c|c|c|c|}
\cline{2-4}
\multicolumn{1}{c|}{}&$\#\Gamma_0$ & $\#\Gamma_\infty$ & $\hat\nu$ \\ \hline
all $x_j\neq0$ & $\frac{|z|}{\,\,\|x\|^2}$ & $0$ & $\frac{|z|\phantom{^2}}{\|x\|^2}$\\ \hline
some $x_j=0$  & $\frac{|z|\phantom{^2}}{\|x\|^2}$ & $\infty$ & $\infty$ \\ \hline
$x=0$ & $0$ & $\infty$ & $\infty$\\ \hline
\end{tabular}\qquad\qquad
\begin{tabular}{|c|c|c|c|}
\cline{2-4}
\multicolumn{1}{c|}{}&$b(\Gamma_0)$ & $b(\Gamma_\infty)$ & $\hat\beta$ \\ \hline
all $x_j\neq0$ & $\frac{|z|\phantom{^2}}{\|x\|^2}$ & $0$ & $\frac{|z|\phantom{^2}}{\|x\|^2}$\\ \hline
some $x_j=0$  & $\frac{|z|\phantom{^2}}{\|x\|^2}$ & $\frac{|z|\phantom{^2}}{\|x\|^2}$ & $\frac{|z|\phantom{^2}}{\|x\|^2}$ \\ \hline
$x=0$ & $0$ & $\infty$ & $\infty$\\ \hline
\end{tabular}
\caption{The order of the contributions to $\hat{\nu}$ and $\hat{\beta}$ coming respectively from $\Gamma_0$ and $\Gamma_\infty$ (it is assumed $p=(x,z)\neq (0,0)$). The ``topology'' counting function $\hat{\beta}$ is more stable: it behaves as a rational function, whereas $\hat{\nu}$ has a ``delta function'' when some $x_j$ is zero. Notice that isolated geodesics are always finite.}\label{comparison}
\end{figure}

\begin{remark}On a contact Carnot group there is a well defined family of ``non-homogeneous dilations'' $\delta_\epsilon(x,z)=(\epsilon x, \epsilon^2 z)$, where $\epsilon>0$ (see \cite{nostrolibro, Bellaiche}). These dilations have the property that if $\gamma$ is a geodesic between the origin and $p$, then $\delta_\epsilon\gamma$ is a geodesic between the origin and $\delta_\epsilon(p)$ (the energies are though different, see Prop. \ref{p:blowup} below). In particular both the counting function and the topology function are constants along the trajectories of $\delta_\epsilon$: 
\begin{equation}
\hat{\nu}(\delta_\epsilon(p))=\hat{\nu}(p)\qquad \text{and}\qquad \hat{\beta}(\delta_\epsilon(p))=\hat{\beta}(p)\qquad \text{for all $\epsilon>0$}.
\end{equation}
\end{remark}

\subsection{Families of geodesics}

A simple way to produce families of geodesics (critical manifolds) is to act on a geodesic $\gamma$ with sub-Riemannian isometries fixing the endpoints of $\gamma$. 

\begin{example}[Heisenberg, continuation]
Let us consider the Heisenberg group $\mathbb{H}_{2n+1}$. Thus $k=1$ and $\al=1$ ($A$ is the canonical symplectic matrix). Let $p=(0,z)$ be a vertical point and $\gamma$ a geodesic from the origin to $p$. The group of isometries fixing the origin is isomorphic to:
\begin{equation}
\textrm{ISO}(\mathbb{H}_{2n+1})\simeq \U(n)\rtimes \Z_2.
\end{equation}
Each isometry $g$ in the connected component $\U(n)$ of the identity fixes $p=(0,z)$, thus $g\gamma$ is still a geodesic from the origin to $p$; such an isometry stabilizes the whole $\gamma$ if it fixes the initial covector. Then, the stabiliser subgroup of the geodesic $\gamma$ is $\textrm{ISO}_\gamma(\mathbb{H}_{2n+1})\simeq \U(n-1).$ In this way we produce a family:
\begin{equation}
X_\gamma=\U(n)/\U(n-1)\simeq S^{2n-1},
\end{equation}
consisting of distinct geodesics isometrically equivalent to $\gamma$. In other words all geodesics in $X_\gamma$ are obtained from $\gamma$ by composition with an isometry (and they all have the same energy). In this case, it turns out that $X_\gamma$ is a connected component of $\Gamma_\infty(p)$, i.e. a critical manifold.
\end{example}

Surprisingly this is not the case for more general Carnot groups. In fact, given a critical manifold $X\subset \Gamma_{\infty}(p)$ (one of the above spheres), this need not be obtained by acting with the stabilizer of $p$ on a fixed geodesic. In other words, geodesics forming $X$, although all having the same energy and endpoints, might be isometrically non-equivalent. They are ``deformations'' of each other, but not via isometries.

We say that two geodesics with the same endpoints are \emph{isometrically equivalent} if they are obtained one from the other by composition of an isometry of $G$. We denote by $\bar\Gamma_{\infty}(p)$ the set of equivalence classes of isometrically equivalent geodesics ending at $p$. For example, a family of isometrically equivalent geodesics corresponds to just a point in the quotient $\bar{\Gamma}_\infty(p)$.

The topology of this set (a quotient of $\Gamma_{\infty}(p)$) is related with the commensurability of the singular values of $A$ (see Thm.~\ref{t:noninvnoneqfamilies}).

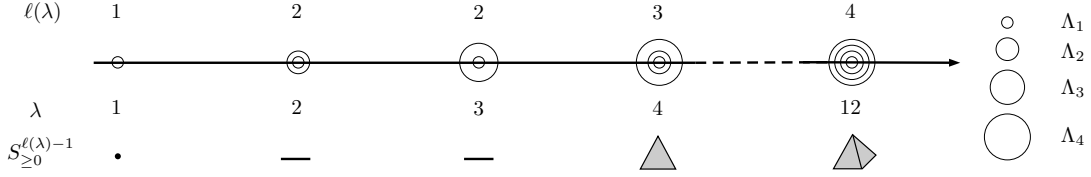
\begin{figure}
\psscalebox{0.8 0.8} 
{
\begin{pspicture}(0,-1.3966794)(17.8571,1.3966794)
\definecolor{colour0}{rgb}{0.8,0.8,0.8}
\rput[bl](0.25722224,1.0366795){$\ell(\lambda)$}
\rput(1.7905554,1.2366793){$1$}
\rput(4.790555,1.2366793){$2$}
\rput(7.812778,1.2255683){$2$}
\rput(10.790555,1.2255683){$3$}
\rput(0.535,-1.107765){$S^{\ell(\lambda)-1}_{\geq 0}$}
\pstriangle[linecolor=black, linewidth=0.02, fillstyle=solid,fillcolor=colour0, dimen=outer](10.790555,-1.3966539)(0.62222224,0.5777778)
\psdots[linecolor=black, dotsize=0.1](1.812778,-1.1744317)
\psline[linecolor=black, linewidth=0.04](4.523889,-1.218876)(5.012778,-1.218876)
\psline[linecolor=black, linewidth=0.04](7.567639,-1.218876)(8.056527,-1.218876)
\rput[bl](17.4871,-0.19436){$\Lambda_3$}
\rput[bl](17.4871,0.40564){$\Lambda_2$}
\pscircle[linecolor=black, linewidth=0.01, dimen=outer](16.596779,-0.02603713){0.29032257}
\pscircle[linecolor=black, linewidth=0.01, dimen=outer](16.596779,0.60680187){0.19354838}
\rput[bl](17.4871,0.90564){$\Lambda_1$}
\pscircle[linecolor=black, linewidth=0.01, dimen=outer](16.59563,1.0500127){0.1}
\pscircle[linecolor=black, linewidth=0.01, dimen=outer](16.596779,-0.8524245){0.38709676}
\rput[bl](17.4871,-0.99436){$\Lambda_4$}
\rput(13.990556,1.2366793){$4$}
\pspolygon[linecolor=black, linewidth=0.02, fillstyle=solid,fillcolor=colour0](13.706326,-1.3821942)(14.183998,-1.3821942)(14.412778,-1.1545345)(14.023098,-0.8144165)
\psline[linecolor=black, linewidth=0.02](14.026404,-0.81433064)(14.175427,-1.3823105)
\pscircle[linecolor=black, linewidth=0.01, dimen=outer](14.012777,0.38757548){0.38709676}
\pscircle[linecolor=black, linewidth=0.01, dimen=outer](14.012777,0.38757548){0.29032257}
\pscircle[linecolor=black, linewidth=0.01, dimen=outer](14.012777,0.38757548){0.19354838}
\pscircle[linecolor=black, linewidth=0.01, fillstyle=solid, dimen=outer](14.012777,0.38757548){0.1}
\psline[linecolor=black, linewidth=0.04, linestyle=dashed, dash=0.17638889cm 0.10583334cm](11.412778,0.38112387)(13.6,0.39001274)
\psline[linecolor=black, linewidth=0.04, arrowsize=0.05291667cm 2.0,arrowlength=1.4,arrowinset=0.0]{->}(13.2,0.39001274)(15.812778,0.38112387)
\pscircle[linecolor=black, linewidth=0.01, dimen=outer](10.812778,0.38757548){0.38709676}
\pscircle[linecolor=black, linewidth=0.01, dimen=outer](7.812778,0.38757548){0.32258064}
\pscircle[linecolor=black, linewidth=0.01, dimen=outer](10.812778,0.38757548){0.19354838}
\pscircle[linecolor=black, linewidth=0.01, dimen=outer](4.812778,0.38757548){0.19354838}
\pscircle[linecolor=black, linewidth=0.01, fillstyle=solid, dimen=outer](10.812778,0.38757548){0.1}
\pscircle[linecolor=black, linewidth=0.01, fillstyle=solid, dimen=outer](7.812778,0.38757548){0.1}
\pscircle[linecolor=black, linewidth=0.01, fillstyle=solid, dimen=outer](4.812778,0.38757548){0.1}
\pscircle[linecolor=black, linewidth=0.01, fillstyle=solid, dimen=outer](1.812778,0.38757548){0.1}
\psline[linecolor=black, linewidth=0.04](1.4127778,0.38112387)(11.412778,0.38112387)
\rput[bl](0.34611112,-0.51887614){$\lambda$}
\rput(1.7905554,-0.36332065){$1$}
\rput(4.790555,-0.36332065){$2$}
\rput(7.812778,-0.37443164){$3$}
\rput(10.790555,-0.37443164){$4$}
\rput(13.990556,-0.36332065){$12$}
\end{pspicture}
}
\caption{Equivalence classes of isometrically non-equivalent families of geodesics for $k=4$ commensurable singular values $\alpha_i = 2\pi/ i$ for $i=1,2,3,4$. Thus $\Lambda_i = i\Z \setminus \{0\}$.}\label{fig:familiesintro}
\end{figure}

\begin{theorem}[Isometrically equivalent geodesics]\label{t:noninvnoneqfamiliesintro}Let $G$ be a contact Carnot group.
The set $\bar\Gamma_{\infty}(p)$ of equivalence classes of isometrically equivalent geodesics ending at $p\neq p_0$ is homeomorphic to:
\begin{equation}
\bar\Gamma_{\infty}(p) \simeq \bigcup_{\lambda \in {\Lambda}_p} S_{\geq 0}^{\ell(\lambda)-1},\qquad \ell(\lambda):=\# L(\lambda), 
\end{equation}
where $S^m_{\geq 0} = S^{m} \cap \R^{m+1}_{\geq 0}$ is the intersection of the $m$-sphere with the positive quadrant in $\R^{m+1}$. 
\end{theorem}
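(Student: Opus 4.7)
\emph{Proof plan.} The strategy is to identify the group of isometries of $G$ fixing both the origin and $p$, and then quotient each critical-family sphere coming from Theorem~\ref{t:topologyintro} by its action.

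First, I would characterize the isometries of $G$ fixing the origin. Every such isometry is linear in exponential coordinates, and since it must preserve the grading, the scalar product on $\mathfrak{g}_1$, and the bracket, it takes the form $(x,z)\mapsto(Mx,\epsilon z)$ with $M\in\O(2n)$, $\epsilon\in\{\pm 1\}$ and $\epsilon A=M^{T}AM$. Block-diagonalizing $A$ along its $\al_j$-eigenspaces, each of which carries a natural complex structure ($J$, in complex dimension $n_j$), yields
\begin{equation}
\ISO_0(G)\cong\Bigl(\prod_{j=1}^{k}\U(n_j)\Bigr)\rtimes\Z_2,
\end{equation}
where the $\Z_2$ is generated by the involution that simultaneously complex-conjugates each block and flips $z$. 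Restricting to the stabilizer of $p=(x_1,\ldots,x_k,z)$ replaces $\U(n_j)$ by $\U(n_j-1)$ (rotations fixing $x_j$) at each index with $x_j\neq 0$, keeps the full $\U(n_j)$ at each index with $x_j=0$, and includes the $\Z_2$ only in the exceptional configuration where $z=0$ and every nonzero $x_j$ is real in its block.

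Next, I would invoke the explicit parametrization of the critical sphere at $\la\in\Lambda_p$ from Theorem~\ref{t:topologyintro}. For $j\notin L(\la)$ the horizontal covector component $u_j$ is determined by $x_j$ through the invertible operator $\frac{e^{\la\al_j J}-\I}{\la\al_j J}$; for $j\in L(\la)$ this operator vanishes, so $u_j$ is free, and a necessary condition for the family to be non-empty is $x_j=0$ for every $j\in L(\la)$. The remaining free $u_j$'s are then cut out by the scalar $z$-equation to the sphere $\sum_{j\in L(\la)}\|u_j\|^2=R(\la,p)^2$. The stabilizer of $p$ therefore acts on this sphere through the full product $\prod_{j\in L(\la)}\U(n_j)$, diagonally on each $\mathbb{C}^{n_j}$. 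Since $\U(n_j)$ acts transitively on the spheres in $\mathbb{C}^{n_j}$, the orbit of $(u_j)_{j\in L(\la)}$ is determined by the vector of norms $(\|u_j\|)_{j\in L(\la)}\in\R^{\ell(\la)}_{\geq 0}$, so the quotient is $\{r\in\R^{\ell(\la)}_{\geq 0}:\sum r_j^2=R^2\}\cong S^{\ell(\la)-1}_{\geq 0}$.

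Assembling these quotients over $\la\in\Lambda_p$ gives the claimed description of $\bar\Gamma_\infty(p)$. The main obstacle I foresee is the determination of $\ISO_0(G)$: this is a linear-algebraic computation but requires carefully matching the $J$-structure on each $\al_j$-eigenblock with the unitary group $\U(n_j)$, and simultaneously ruling out mixings between distinct $\al_j$-blocks via the compatibility $\epsilon A=M^{T}AM$. A subsidiary issue is the residual $\Z_2$ when $p$ satisfies the exceptional symmetry condition above: it would identify the family at $\la$ with that at $-\la$, but since $g$ in~\eqref{eq:gintro} is odd, $\Lambda_p$ is automatically symmetric under $\la\mapsto-\la$ in that regime and the two identified spheres are both homeomorphic to the same $S^{\ell(\la)-1}_{\geq 0}$, leaving the homeomorphism type of the final space unchanged.
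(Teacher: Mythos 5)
Your proposal follows essentially the same route as the paper's proof of Theorem~\ref{t:noninvnoneqfamilies}: determine $\ISO(G)$ and the stabilizer of $p$ (Lemmas~\ref{l:isoG} and~\ref{l:isopointG}), use the fiber description of Theorem~\ref{thm:disjoint} to see that over each $\bar\lambda\in\Lambda_p$ the free components $(u_j)_{j\in L(\bar\lambda)}$ sweep a sphere on which the stabilizer acts through $\prod_{j\in L(\bar\lambda)}\U(n_j)$, and pass to the quotient via the norm map $(u_j)\mapsto(\|u_j\|)$, landing on $S^{\ell(\lambda)-1}_{\geq 0}$ (the paper additionally checks, via compactness and Hausdorffness, that the induced bijection is a homeomorphism). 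The one inaccuracy is your handling of the $\Z_2$ component: if it really contributed to the stabilizer it would glue the spheres at $\lambda$ and $-\lambda$ and so \emph{would} change the quotient, contrary to your claim that the homeomorphism type is unaffected; the correct observation is that an isometry with $\theta=-1$ can fix $p$ only when $z=0$, and then $\Lambda_p=\emptyset$ (for $x\neq 0$ one has $G_0(\lambda)\lambda>0$ since $g$ is odd and positive on $\R_+$, so $(z-G_0(\lambda))\lambda<0$; for $x=0$ the point is $p_0$, which is excluded), hence $\Gamma_\infty(p)=\emptyset$ and restricting to the identity component, as the paper does, loses nothing. Relatedly, your side condition that every nonzero $x_j$ be ``real in its block'' is not the right criterion, since an anti-unitary block can always be chosen to fix a prescribed vector; but since the whole case is vacuous this does not affect the validity of the argument.
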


See Fig.~\ref{fig:familiesintro}. When $A$ is generic, for every $\la \in \Lambda_p \subseteq \Lambda$ we have $\ell(\la)=1$ and $\bar\Gamma_{\infty}$ is a discrete set of points, one for each $\la \in \Lambda_p$ (all the geodesics in a critical manifold $X\simeq S^{1}$ are isometrically equivalent to a given one).  Nevertheless, non-trivial manifolds of isometrically non-equivalent geodesics appear when there are \emph{resonances}.

\subsection{A limiting procedure}
We discuss here the main ingredient of our study for contact sub-Riemannian manifolds: the \emph{nilpotent approximation} of the structure at a point $p_0$. Because of the local nature of the problem, we can assume that $M=\R^{2n+1}$ and the point $p_0$ is the origin. Moreover, the distribution $\distr \subset T\R^{2n+1}$ is given by:
\begin{equation}
\distr=\spn\{f_1, \ldots, f_{2n}\},
\end{equation}
where $f_1, \ldots, f_{2n}$ are bounded vector fields on $\R^{2n+1}$. The sub-Riemannian structure on $\distr$ is obtained by declaring these vector fields to be orthonormal at each point.

We assume that the coordinates $(x, z)\in \R^{2n}\times \R$ are \emph{adapted} to the distribution at the origin namely, $\distr_{p_0} = \spn\{\partial_{x_1},\ldots,\partial_{x_{2n}}\}$ (for example we take canonical Darboux's coordinates). In the language of sub-Riemannian geometry these coordinates, at least in the contact (or step $2$) case, are also called \emph{privileged}.  Using these coordinates we define ``dilations'' $\delta_\epsilon: M \to M$ by:
\begin{equation}
\delta_{\epsilon}(x,z):=(\epsilon x, \epsilon^2 z),\qquad \epsilon>0,
\end{equation}
and the \emph{nilpotent approximation at $p_0$}, another sub-Riemannian structure on the same base manifold $M$, given by declaring the following fields:
\begin{equation}
\hat{f}_i:=\lim_{\epsilon\to 0}\epsilon \delta_{\frac{1}{\epsilon}*}f_i, \qquad \forall i=1,\ldots,2n,
\end{equation}
a new orthonormal frame. Thus, the nilpotent approximation at a point $p_0$ is the ``principal part'' of the original sub-Riemannian structure in a neighbourhood of $p_0$ w.r.t. the non-homogeneous dilations $\delta_\epsilon$. Moreover, it turns out that the nilpotent approximation at any point $p_0$ of a contact sub-Riemannian manifold is a contact Carnot group.


We introduce the following notation:
\begin{equation}
\nu(p)=\#\{\textrm{geodesics joining $p_0$ and $p$}\}.
\end{equation}
Thus $\nu(p)$ will denote the number of \emph{local} geodesics between $p_0$ and $p$, i.e. geodesics in $M$ that are contained in a coordinate chart of $p_0$. Similarly $\hat{\nu}$ denoted the number of geodesics between the origin and $p$ for the nilpotent approximation. The next theorem relates the geodesic count on the original structure and on the nilpotent Carnot group structure (see Thm.~\ref{t:limit}). 

\begin{theorem}[Counting in the limit]\label{t:limitintro}
Let $M$ be a contact sub-Rieman\-nian manifold. For the generic $p \in M$ sufficiently close to $p_0$:
\begin{equation}
\hat{\nu}(p)\leq \liminf_{\epsilon \to 0}\nu(\delta_\epsilon(p)).
\end{equation}
where $\delta_\epsilon$ is the non-homogeneous dilation defined in some set of adapted coordinates in a neighbourhood of $p_0$.
\end{theorem}

Combining Thm. \ref{t:limitintro} and Thm. \ref{thm:orderintro} we obtain an estimate for the order of growth of the number of ``local'' geodesics between two close points on a contact manifold (see Thm.~\ref{t:darboux}). 

\begin{theorem}[The local bound]\label{t:darbouxintro}
Let $M$ be a contact manifold and $q \in M$. Denote by $(x,z)$ Darboux's coordinates on a neighbourhood $U$ of $q$. There exist constants $C(q), R(q)$ such that, for the generic $p=(x,z)\in U$:
\begin{equation}
\liminf_{\epsilon \to 0}\nu(\delta_\epsilon(p)) \geq C(q)\frac{|z|\phantom{^2}}{\|x\|^2} + R(q).
\end{equation}
\end{theorem}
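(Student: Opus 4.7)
The plan is a direct combination of Theorem~\ref{t:limitintro} with the lower bound of Theorem~\ref{thm:orderintro}, after a small coordinate bookkeeping. First I would apply Theorem~\ref{t:limitintro} with $q$ playing the role of $p_0$: for a generic $p$ in a neighbourhood of $q$,
\begin{equation}
\hat{\nu}(p) \leq \lim_{\epsilon\to 0} \nu(\delta_\epsilon(p)),
\end{equation}
where $\hat{\nu}(p)$ is the geodesic count on the nilpotent approximation $G_q$ of $M$ at $q$. Since $M$ is contact, $G_q$ is a contact Carnot group, so the lower bound of Theorem~\ref{thm:orderintro} applies and yields, for all $p=(x,z)$ whose components $x_j$ in the eigenspace decomposition of the structure matrix $A(q)$ of $G_q$ are nonzero,
\begin{equation}
\hat{\nu}(p) \geq C_1(q)\,\frac{|z|}{\|x\|^2} + R_1(q),
\end{equation}
where $C_1(q), R_1(q)$ are read off from~\eqref{eq:numerical} in terms of the singular values of $A(q)$. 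Chaining the two inequalities proves the claim with $C(q):=C_1(q)$ and $R(q):=R_1(q)$.

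The subtle point is that the Darboux coordinates $(x,z)$ prescribed by the statement need not coincide with the exponential coordinates on $G_q$ in which Theorem~\ref{thm:orderintro} was stated. However, both coordinate systems are privileged at $q$, so they differ by the change of frame described in the Remark following Theorem~\ref{thm:orderintro}: an element of $\O(2n)$ acting on the horizontal generators and a scalar on the vertical one. Under such a transformation the bound~\eqref{eq:orderintro} rescales covariantly (the constants are homogeneous of degree $-1$ in the singular values), so the bound can be stated directly in Darboux coordinates after absorbing the rescalings into the constants $C(q), R(q)$.

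The generic subset of $U$ on which the bound holds is the intersection of the generic set supplied by Theorem~\ref{t:limitintro} with the complement of the arrangement $\{x_j = 0 \text{ for some } j\}$ associated with the eigenspace decomposition of $A(q)$; both are complements of closed sets of strictly smaller dimension, so their intersection is open and dense in $U$. The main obstacle is therefore not logical but conceptual: verifying that the quantitative bound of Theorem~\ref{thm:orderintro} transfers faithfully from $G_q$ to $M$ under the coordinate identification. The invariance Remark after Theorem~\ref{thm:orderintro} is precisely what guarantees this transfer, so the proof reduces to a careful bookkeeping of constants rather than any new estimation.
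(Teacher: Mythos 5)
Your overall route is the same as the paper's: chain Theorem~\ref{t:limitintro} (applied at $q$) with the Carnot-group lower bound of Theorem~\ref{thm:orderintro}/Theorem~\ref{thm:lower}, with genericity supplied by Sard plus the open dense condition ``all $x_j\neq 0$''. The flaw is in the step you yourself single out as the crux, the coordinate transfer. Darboux coordinates $(x,z)$ on $M$ are \emph{not} related to the exponential coordinates $(\theta,\rho)$ of the nilpotent approximation by an orthogonal change of the horizontal generators plus a rescaling of the vertical one, so the invariance Remark after Theorem~\ref{thm:orderintro} does not apply: that remark only covers changes between two sets of \emph{exponential} coordinates induced by different orthonormal frames of $\mathfrak g_1$ and generators of $\mathfrak g_2$. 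Darboux coordinates are merely adapted (privileged) coordinates, and being privileged does not force the change of coordinates to be linear-orthogonal: as the paper shows (Lemma~\ref{l:linearchange}), the actual relation is
\begin{equation}
x = B\theta, \qquad z = \theta^* S\theta + c\,\rho,
\end{equation}
with $B \in \mathrm{GL}(2n)$ in general non-orthogonal, $c\neq 0$, and a genuinely quadratic correction $\theta^*S\theta$ in the vertical coordinate (the orthonormal frame $f_1,\ldots,f_{2n}$ need not be orthonormal for the Euclidean structure of the Darboux chart, and $\alpha$ has a nontrivial linear part in $x$). Consequently you cannot simply take $C(q)=C_1(q)$ and $R(q)=R_1(q)$ ``after absorbing the rescalings'': the quantity that Theorem~\ref{thm:lower} bounds is $|\rho|/\|\theta\|^2$, not $|z|/\|x\|^2$.

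The gap is fixable by an explicit estimate rather than by invariance: from the transformation above one gets $\|\theta\|^2 \leq \|B^{-1}\|^2\|x\|^2$ and $|\rho| \geq \bigl(|z| - \|S\|\,\|\theta\|^2\bigr)/|c|$, hence
\begin{equation}
\frac{|\rho|\phantom{^2}}{\|\theta\|^2} \;\geq\; \frac{1}{|c|\,\|B^{-1}\|^2}\,\frac{|z|\phantom{^2}}{\|x\|^2} \;-\; \frac{\|S\|}{|c|},
\end{equation}
so the lower bound transfers with \emph{new} constants $C(q)$, $R(q)$ depending on $B$, $S$, $c$ (i.e.\ on the frame at $q$) as well as on the singular values of $A(q)$. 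This is exactly the ``quadratic transformation'' step the paper's proof invokes via Lemma~\ref{l:linearchange}; without it, your argument as written rests on an incorrect identification of the two coordinate systems.
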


A completely new phenomenon in the sub-Riemannian case is the existence of a sequence of points $q_n\to q$ with arbitrary large number of local geodesics between the two (see Thm.~\ref{t:local}). Notice that, in general, we cannot predict the existence of a point $p$ with infinitely many local geodesics between $q$ and $p$.

\begin{theorem}[Abundance of ``local'' geodesics]\label{t:localintro}
Let $M$ be a contact sub-Riemannian manifold and $q\in M$. Then there exists a sequence $\{q_n\}_{n\in \N}$ in $M$ such that:
\begin{equation}
\lim_{n\to \infty}q_n= q\qquad \text{and}\qquad \lim_{n\to \infty}\nu(q_n)=\infty.
\end{equation}
\end{theorem}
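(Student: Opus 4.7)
The plan is to combine the explicit local lower bound of Theorem~\ref{t:darbouxintro} with a diagonal extraction using the non-homogeneous dilations $\delta_\epsilon$, which let us push generic points arbitrarily close to $q$ while still guaranteeing many local geodesics between $q$ and the pushed point.

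First I fix Darboux coordinates $(x,z) \in \R^{2n} \times \R$ on a neighbourhood $U$ of $q$, so that $q$ is identified with the origin and $\delta_\epsilon(x,z)=(\epsilon x,\epsilon^2 z)$ is defined on $U$ for $\epsilon$ small. By Theorem~\ref{t:darbouxintro} there exist constants $C(q)>0$, $R(q) \in \R$ and a generic subset $G \subset U$ (complement of a negligible exceptional set) such that
\begin{equation}
\lim_{\epsilon \to 0}\nu(\delta_\epsilon(p)) \;\geq\; C(q)\frac{|z|}{\|x\|^2} + R(q), \qquad \forall\, p=(x,z) \in G.
\end{equation}
I then pick a reference height $z_0 \neq 0$ and a sequence $p_n := (x_n, z_0) \in G$ with $x_n \to 0$; this is possible because the complement of $G$ has empty interior, so a small perturbation of any naive choice keeps us inside $G$. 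For these points $N_n := \bigl\lfloor C(q)|z_0|/\|x_n\|^2 + R(q)\bigr\rfloor$ diverges.

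The diagonal step is: for each $n$, the displayed inequality applied to the fixed point $p_n \in G$ furnishes $\epsilon_n>0$ small enough that \emph{both} $\nu(\delta_{\epsilon_n}(p_n)) \geq N_n - 1$ and $d(\delta_{\epsilon_n}(p_n),q)<1/n$ in the ambient distance; the first item uses the (lim-inf) content of the inequality at $p_n$, and the second follows from $\delta_\epsilon(p_n)\to q$ as $\epsilon\to 0$ for each fixed $n$. Setting $q_n := \delta_{\epsilon_n}(p_n)$ yields $q_n \to q$ in $M$ and $\nu(q_n)\geq N_n - 1 \to \infty$, which is the claim.

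The hard part is precisely the non-uniformity: Theorem~\ref{t:darbouxintro} only controls $\nu(\delta_\epsilon(p))$ in the asymptotic regime $\epsilon\to 0$ at each fixed $p$, with no rate that is uniform as $p \to q$, which is why a diagonal procedure is unavoidable (one cannot simply set $p=q_n$ and send $n\to\infty$). Two minor technical points are costless: the genericity hypothesis, since $U\setminus G$ has empty interior so generic sequences $x_n\to 0$ exist, and the requirement that $\delta_{\epsilon_n}(p_n)$ remain inside the Darboux chart $U$, which is automatic for $\epsilon_n$ small.
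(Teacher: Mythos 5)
Your proposal is correct and follows essentially the same route as the paper: choose generic points $p_n$ with $|z|/\|x\|^2 \to \infty$ (the paper takes regular values of $\hat{E}$ via Sard with ratio $\geq n$, you fix $z_0$ and let $x_n \to 0$), apply the local lower bound of Theorem~\ref{t:darbouxintro}, and then diagonalize by choosing $\epsilon_n$ small enough that $q_n = \delta_{\epsilon_n}(p_n)$ both carries many geodesics and converges to $q$. The only cosmetic difference is your explicit remark on the non-uniformity forcing the diagonal argument, which the paper handles identically by picking $\epsilon_m \to 0$ with the bound valid for all $\epsilon \leq \epsilon_m$.
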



\subsection*{Acknowledgments}
The authors thank A. A. Agrachev, D. Barilari, A. Gentile for stimulating comments. We thank also the anonymous referee and P. Silveira for carefully reading the manuscript. The first author was supported by the European Community's Seventh Framework Programme ([FP7/2007-2013] [FP7/2007-2011]) under grant agreement No. [258204].

\section{Preliminaries}\label{s:prel}

We recall some basic facts in sub-Riemannian geometry. We refer to \cite{nostrolibro,librorifford,Jea-2100,montbook} for further details. Let $M$ be a smooth, connected manifold of dimension $n \geq 3$. A sub-Riemannian structure on $M$ is a pair $(\distr,\metr)$ where $\distr$ is a smooth vector distribution of constant rank $k\leq n$ satisfying the \emph{H\"ormander condition} (i.e. $\mathrm{Lie}_x\distr = T_x M$, $\forall x \in M$) and $\metr$ is a smooth Riemannian metric on $\distr$. A Lipschitz continuous curve $\g:[0,1]\to M$ is \emph{admissible} (or \emph{horizontal}) if $\dot\g(t)\in\distr_{\g(t)}$ for a.e. $t \in [0,1]$.
Given a horizontal curve $\g:[0,1]\to M$, the \emph{energy} of $\g$ is
\begin{equation}
J(\g)=\int_I \|\dot{\g}(t)\|^2dt,
\end{equation}
where $\|\cdot\|$ denotes the norm induced by $\metr$.  The pair $(\distr,\metr)$ can be given, at least locally, by assigning a set of $k$ smooth vector fields that span $\distr$, orthonormal for $\metr$. In this case, the set $\{f_1,\ldots,f_k\}$ is called a \emph{local orthonormal frame} for the sub-Riemannian structure. 


\begin{definition}\label{ex:contact}
A sub-Riemannian manifold is \emph{contact} if locally there exists a one form $\alpha$ such that $\distr= \ker \alpha$, and $d\alpha|_{\distr} $ is non degenerate (the rank of $\distr$ must be even). Admissible curves are called \emph{Legendrian}. 
\end{definition}

\begin{definition}
Let $M$ be a contact manifold. A sub-Riemannian \emph{geodesic} is a non-constant Legendrian curve $\g:[0,1]\to M$ that is locally energy minimizer. More precisely, for any $t \in [0,1]$ there exists a sufficiently small interval $I \subseteq [0,1]$, containing $t$, such that the restriction $\g|_{I}$ minimizes the energy between its endpoints. 
\end{definition}

Any geodesic starting at $p_0$ can be lifted to a Lipschitz curve $\eta :[0,1] \to T^*M$ called \emph{sub-Riemannian extremal}, as we discuss now. In general, sub-Riemannian extremals can be \emph{normal} or \emph{abnormal}, but abnormal extremals do not appear in contact or Riemannian structures. For this reason we only discuss normal extremals.


\begin{definition}
The \emph{Hamiltonian function} $H \in C^\infty(T^*M)$ is
\begin{equation}
H(\eta) = \frac{1}{2}\sum_{i=1}^k\langle \eta, f_i\rangle^2, \qquad \forall \eta \in T^*M,
\end{equation}
where $f_1,\ldots,f_k$ is a local orthonormal frame and $\langle\eta,\cdot\rangle$ denotes the action of the covector $\eta$ on vectors. 
\end{definition}

Let $\sigma$ be the canonical symplectic form on $T^*M$. With the symbol $\vec{a}$ we denote the Hamiltonian vector field on $T^*M$ associated with a function $a \in C^\infty(T^*M)$. Indeed $\vec{a}$ is defined by the formula $da = \sigma(\cdot,\vec{a})$. Consider the \emph{Hamiltonian vector field} $\vec{H} \in \mathrm{Vec}(T^*M)$.

\begin{definition}
Non-constant trajectories of the Hamiltonian system $\dot\eta = \vec{H}(\eta)$ are normal \emph{sub-Riemannian extremals}.
\end{definition}

In any structure where abnormal extremals do not exist (such as contact or Riemannian structures), the next theorem completely characterizes \emph{all} geodesics.
\begin{theorem}\label{t:pmpw}
Normal sub-Riemannian geodesics are exactly projections on $M$ of normal sub-Riemannian extremals. In particular, all normal geodesics are smooth.
\end{theorem}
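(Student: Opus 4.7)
The plan is to prove the two inclusions separately and then deduce smoothness as a corollary. Writing any admissible curve locally as $\dot\g(t) = \sum_{i=1}^{k} u_i(t) f_i(\g(t))$ for measurable controls $u \in L^2([0,1],\R^k)$ relative to a local orthonormal frame $f_1,\ldots,f_k$, one has $J(\g) = \int_0^1 \|u(t)\|^2 dt$, so the search for geodesics becomes a standard fixed-endpoint optimal control problem. For the direction that projections of normal extremals are geodesics, a direct computation from the definition $dH = \sigma(\cdot,\vec H)$ shows that if $\eta$ is a trajectory of $\vec H$ and $\g = \pi\circ\eta$ then $\dot\g(t) = \sum_i \langle\eta(t),f_i(\g(t))\rangle f_i(\g(t))$, so $\g$ is admissible. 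Local energy-minimality would then follow from a classical field-of-extremals (Hamilton--Jacobi) calibration: near any base point one constructs a smooth local solution $S$ of $H(dS)=\tfrac{1}{2}$ whose characteristic curves are projections of trajectories of $\vec H$ foliating a neighborhood of $\g$, and a direct comparison using $dS$ and Cauchy--Schwarz in the $u$-coordinates yields $J(\tilde\g|_I)\geq J(\g|_I)$ on any sufficiently short subinterval $I$, for every competitor $\tilde\g$ with the same endpoints.

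For the converse, I would apply Pontryagin's maximum principle to this optimal control problem. PMP produces a nontrivial lift $\eta:[0,1]\to T^*M$ together with a multiplier $\nu_0\in\{0,-1\}$ satisfying the associated Hamiltonian equations; when $\nu_0=-1$ the lift $\eta$ is automatically an integral curve of $\vec H$, which is the desired conclusion. The only obstruction is to rule out the abnormal alternative $\nu_0=0$, and this is precisely where the contact hypothesis enters. In the abnormal case PMP forces $\eta(t)\in\distr^{\perp}_{\g(t)}$ for all $t$, so $\eta(t)=c(t)\alpha_{\g(t)}$ for a scalar $c(t)$ that cannot vanish by nontriviality. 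The remaining Hamiltonian/transversality conditions then reduce, after a short computation, to $c(t)\, d\alpha_{\g(t)}(\dot\g(t),v)=0$ for every $v\in\distr_{\g(t)}$. Since $d\alpha|_{\distr}$ is non-degenerate by Definition~\ref{ex:contact}, this forces $\dot\g\equiv 0$, contradicting that geodesics are non-constant. Hence $\nu_0=-1$ and every geodesic admits a normal extremal lift.

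Smoothness is then immediate: $\vec H$ is a smooth vector field on $T^*M$, so each of its integral curves is smooth, and $\g=\pi\circ\eta$ inherits this regularity (the controls $u_i(t)=\langle\eta(t),f_i(\g(t))\rangle$ are smooth as well). I expect the conceptually hardest point to be the contact exclusion of abnormal extremals --- all the remaining ingredients, namely the PMP itself and the Hamilton--Jacobi calibration, are well-established adaptations of the Riemannian proofs via the symplectic structure on $T^*M$.
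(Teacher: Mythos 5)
The paper does not actually prove this theorem: it is quoted as a standard fact (sub-Riemannian PMP plus absence of abnormal extremals in contact structures) from the cited references, so your proposal can only be measured against that standard argument, which is indeed the route you follow. Your sufficiency direction (calibration by a local solution of $H(dS)=\tfrac12$ whose characteristics form a field of extremals around the trajectory) and your exclusion of abnormal lifts (an abnormal lift must be a nonvanishing multiple $c(t)\alpha$ of the contact form, and differentiating $\langle \eta(t),f_i(\gamma(t))\rangle\equiv 0$ along the adjoint flow yields $c(t)\,d\alpha(\dot\gamma(t),\cdot)|_{\distr}=0$, hence $\dot\gamma\equiv 0$ by non-degeneracy, contradicting non-constancy) are both correct in outline.

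The genuine gap is in the converse direction, where you apply PMP to $\gamma$ on all of $[0,1]$. With the paper's definition of geodesic (Section~\ref{s:prel}), $\gamma$ is only required to minimize energy on sufficiently short subintervals; on $[0,1]$ it need not be a minimizer, nor even a local minimizer in any reasonable topology on curves with fixed endpoints (compare a Riemannian geodesic continued past a conjugate point: every short arc minimizes, yet the whole arc is not even a weak local minimum of the fixed-endpoint energy). So the hypotheses of PMP are not satisfied by $\gamma$ on $[0,1]$, and the existence of a lift $\eta:[0,1]\to T^*M$ does not follow as you state it. The repair is standard but must be made explicit: cover $[0,1]$ by finitely many overlapping intervals $I$ on which $\gamma|_I$ is a genuine minimizer (these exist by the definition and compactness), apply PMP and your abnormal-exclusion argument on each to obtain a normal lift there, and then glue. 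The gluing requires uniqueness of the normal lift of a fixed non-constant curve, which again uses the contact hypothesis: two normal lifts of $\gamma$ have the same horizontal components $h_i=\langle\eta,f_i\rangle=u_i$ (the controls of $\gamma$), and subtracting the equations $\dot h_i=\{H,h_i\}=\sum_j h_j\langle\eta,[f_j,f_i]\rangle$ for the two lifts gives $w(t)\sum_j u_j(t)\,c^0_{ji}(\gamma(t))=0$ for all $i$, where $w$ is the difference of the vertical components and $(c^0_{ij})$ is, up to the nonzero factor $\alpha(f_0)$, the matrix of $-d\alpha|_{\distr}$, hence non-degenerate; since $\|u\|^2=2H$ is a nonzero constant along a non-constant normal extremal, this forces $w\equiv 0$, so the local lifts agree on overlaps and patch into one normal extremal on $[0,1]$. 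With this localization-and-gluing step added your argument is complete, and the smoothness conclusion is immediate as you say.
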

Moreover, any normal sub-Riemannian geodesic can be specified by its \emph{initial covector}. 
\begin{definition}
The \emph{sub-Riemannian exponential map} (with origin $p_0$) $E: T_{p_0}^*M \to M$ is
\begin{equation}\label{eq:expmap}
E(\eta_{0}):= \pi(e^{\vec{H}}(\eta_{0})), \qquad \forall \eta_0 \in T_{p_0}^*M.
\end{equation}
where $e^{t\vec{H}}(\eta_0)$ denotes the integral curve of $\vec{H}$ starting from $\eta_0$.
\end{definition}
Thus all geodesics from $p_0$ are the image through $E$ of the ray $t \mapsto t\eta$. We denote by $\Gamma(p)=E^{-1}(p)\subset T_{p_0}^*M$ the set of geodesics from $p_0$ to $p\neq p_0$, with the subset topology.

\subsection{Fibers of the exponential map and geodesics}
Notice that the correspondence:
\begin{equation}
\eta\mapsto \gamma_\eta, \qquad \gamma_{\eta}(t)=\pi(e^{t\vec{H}}(\eta))
\end{equation}
defines a continuous map from $T_{p_0}^*M$ to the set of admissible curves. If we endow this set with the $W^{1, \infty}$-topology and we assume $p\neq p_0$, this map restricts to a homeomorphism between $\Gamma(p)$ and the set of geodesics to $p$: the topologies on $\Gamma(p)$ as a subset of $T_{p_0}^*M$ or as a subset of the space of admissible curves coincide and the point of view we adopted is not restrictive.

On the other hand, recall that extremals (resp. geodesics) are non-constant and for these reasons we will always make the assumption $p\neq p_0$.
Most of our results are true also for $p=p_0$, but then one should regard $\Gamma(p)$ simply as the fiber of $E$ and not as the set of geodesics to $p$.

\subsection{Contact Carnot groups}


A corank 1 Carnot group $G$ is a simply connected Lie group whose Lie algebra of left-invariant vector fields $\mathfrak{g}$ admits a nilpotent stratification of step $2$:
\begin{equation}
\mathfrak{g} = \mathfrak{g}_1 \oplus \mathfrak{g}_2, \qquad \mathfrak{g}_1,\mathfrak{g}_2 \neq \{0\},
\end{equation}
with $\dim \mathfrak{g}_2 = 1$ and
\begin{equation}
[\mathfrak{g}_1,\mathfrak{g}_1] = \mathfrak{g}_{2}, \qquad \text{and} \qquad [\mathfrak{g}_1,\mathfrak{g}_2] = [\mathfrak{g}_2,\mathfrak{g}_2]= \{0\}.
\end{equation}
We define a scalar product on $\mathfrak{g}_1$ by declaring a set $f_1,\ldots,f_{k} \in \mathfrak{g}_1$ to be a global orthonormal frame. In particular, $\distr|_x = \mathfrak{g}_1|_x$, for all $x \in G$. The group exponential map,
\begin{equation}
\mathrm{exp}_{G} : \mathfrak{g} \to G,
\end{equation}
associates with $v \in \mathfrak{g}$ the element $\gamma(1)$, where $\gamma: [0,1] \to G$ is the unique integral line of the vector field $v$ such that $\gamma(0) = 0$. Since $G$ is simply connected and $\mathfrak{g}$ is nilpotent, $\mathrm{exp}_G$ is a smooth diffeomorphism. Thus we can identify $G \simeq \R^m$ with a polynomial product law.


\begin{definition}
A contact Carnot group is a corank $1$ Carnot group that admits a contact structure with $\distr = \mathfrak{g}_1$.
\end{definition}
The only non-trivial request is the non-degeneracy of the contact form. In fact, let $G$ be a contact Carnot group, $f_1,\ldots,f_{2n} \in \mathfrak{g}_1$ be a global orthonormal frame of left-invariant vector fields, and $f_0 \in \mathfrak{g}_2$ a generator for the second layer. Indeed:
\begin{equation}
[f_i,f_j] = A_{ij} f_0, \qquad \forall i,j=1,\ldots,2n,
\end{equation}
for some constant matrix $A \in \so(2n)$. Observe that there exists a unique never-vanishing left invariant one-form $\alpha$ (up to constant scaling) such that $\distr = \ker \alpha$. 
Using the identity $d\alpha(X,Y)=X\alpha(Y)-Y\alpha(X)-\alpha([X,Y])$ we obtain:
\begin{equation}\label{eq:cartan}
d\alpha(f_i,f_j) = -\alpha([f_i,f_j]) = A_{ji} \alpha(f_0).
\end{equation}
Since $\alpha(f_0) \neq 0$ the matrix $A$ is non-degenerate.

\subsection{Normal form of contact Carnot groups}

By acting on $\mathfrak{g}_1$ with an orthogonal transformation, it is always possible to put $A$ in its canonical form. Such a transformation can be trivially extended to an automorphism of $\mathfrak{g}$, and thus lifts to a group automorphism of $G$ that preserves the scalar product. Therefore, up to isometries, contact Carnot groups are parametrised by the possible singular values of non-degenerate matrices $A \in \so(2n)$. In the following we describe the possible normal forms of contact Carnot groups. Consider the triple $(k, \vec{n},\vec{\alpha})$, where:
\begin{itemize}	
\item[(i)] $k \in \N$, with $1 \leq k \leq n$,
\item[(ii)] $\vec{n} =(n_1,\ldots,n_k)$ is a partition of $n$, namely $n_j \in \N$ and $\sum_{j=1}^k n_j = n$,
\item[(iii)] $\vec{\alpha} =(\alpha_1,\ldots,\alpha_k)$ with $0 < \alpha_1 < \ldots < \alpha_k$.
\end{itemize}
For a fixed choice of $(k,\vec{n},\vec{\alpha})$, let:
\begin{equation}\label{eq:matrixA}
A:= \diag(\alpha_1J_{n_1},\ldots, \alpha_k J_{n_k}) \in \so(2n), \qquad \text{with} \qquad J_{m} = \begin{pmatrix}
0 & \I_m \\ -\I_m & 0
\end{pmatrix}.
\end{equation}
In other words, $A$ has $k$ distinct singular values $0<\alpha_1<\cdots < \alpha_k$, with multiplicities $n_1,\ldots,n_k$ (half the dimension of the corresponding invariant subspaces). This gives the normal form of the $(2n,2n+1)$ graded Lie algebra with parameters $(k,\vec{n},\vec{\alpha})$. As an abstract algebra is given by:
\begin{equation}
\mathfrak{g} = \mathfrak{g}_1 \oplus \mathfrak{g}_2, \qquad \mathfrak{g}_1 = \spn\{f_1,\ldots,f_{2n}\},\quad \mathfrak{g}_2 = \spn\{ f_0\},
\end{equation}
with:
\begin{equation}
[f_i,f_j] = A_{ij} f_0, \qquad i,j=1,\ldots,2n.
\end{equation}
Let $G$ be the unique connected, simply connected Lie group such that $\mathfrak{g}$ is its Lie algebra. Define a scalar product on $\mathfrak{g}_1$ such that $f_1,\ldots,f_{2n}$ is an orthonormal frame. Any contact Carnot group is isomorphic to one of these structures, for a choice of $(k,\vec{\alpha},\vec{n})$. Notice that the normal form is determined only up to global rescaling of the eigenvalues $\vec{\alpha}$ (see \cite[Remark 1]{AGL}).

\subsection{Exponential coordinates}
The orthonormal basis $f_1,\ldots,f_{2n}$ and $f_0$ realize the splitting
\begin{equation}
\mathfrak{g} = \mathfrak{g}_1^{\alpha_1} \oplus \cdots \oplus \mathfrak{g}_1^{\alpha_k} \oplus \mathfrak{g}_2,
\end{equation}
with respect to the generalized eigenspaces of $A$. Accordingly, we identify:
\begin{equation}
G \simeq \R^{2n_1} \oplus \cdots \oplus \R^{2n_k} \oplus \R,
\end{equation}
through the group exponential map $\mathrm{exp}_{G}: \mathfrak{g} \to G$, in such a way that $p \in G$ has exponential coordinates $(x_1,\ldots,x_k,z)$ with $x_i \in \R^{2n_i}$ for $i=1,\ldots,k$ and $z \in \R$.

\subsection{An explicit representation}

An explicit representation of the contact Carnot group with parameters $(k,\vec{\alpha},\vec{n})$ is given by the sub-Riemannian structure induced by the following vector fields on $\mathbb{R}^{2n+1}$, with coordinates $(x,z) \in \R^{2n}\times \R$:
\begin{equation}\label{eq:explicitrep}
f_{i} := \frac{\partial}{\partial x_i} - \frac{1}{2}\sum_{j=1}^{2n}A_{ij}x_j \frac{\partial}{\partial z}, \qquad f_0 := \frac{\partial}{\partial z}, \qquad i =1,\ldots,2n,
\end{equation}
where $A$ is the matrix of Eq.~\eqref{eq:matrixA} with $k$ singular values $\vec{\alpha}$ and multiplicities $\vec{n}$. For the Heisenberg groups $\mathbb{H}_{2n+1}$ (see Example~\ref{ex:heisenberg}) $A$ is the standard symplectic matrix.

\begin{lemma}\label{l:expcoords}
The coordinates $(x,z)$ are the exponential coordinates induced by $f_1,\ldots,f_{2n},f_0$.
\end{lemma}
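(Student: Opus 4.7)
The plan is to unwind both meanings of ``coordinates'' on $G \cong \R^{2n+1}$ and check they coincide by solving an ODE.

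First I would recall what needs to be shown. The explicit coordinates $(x,z) \in \R^{2n}\times\R$ are the coordinates used to write down the vector fields in \eqref{eq:explicitrep}. The exponential coordinates induced by $f_1,\ldots,f_{2n},f_0$ assign to a tuple $(x,z)$ the point $\mathrm{exp}_G\!\left(\sum_{i=1}^{2n} x_i f_i + z f_0\right) \in G$. So the task is to show that for any fixed $(x,z)$, the group exponential of the left-invariant vector field
\begin{equation}
v := \sum_{i=1}^{2n} x_i f_i + z f_0
\end{equation}
lands at the point with explicit coordinates $(x,z)$ when evaluated at time $1$.

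Next I would set up the ODE. By definition $\mathrm{exp}_G(v) = \gamma(1)$ where $\gamma(t) = (X(t),Z(t))$ solves $\dot\gamma = v(\gamma)$ with $\gamma(0)=0$. Using the explicit form \eqref{eq:explicitrep}, the coefficients $x_i, z$ are constants, so the system reads
\begin{equation}
\dot X_i(t) = x_i, \qquad \dot Z(t) = z - \tfrac{1}{2}\sum_{i,j=1}^{2n} x_i A_{ij} X_j(t), \qquad X(0)=0,\ Z(0)=0.
\end{equation}
The first set of equations integrates immediately to $X_i(t) = t\, x_i$. Substituting back into the equation for $Z$ gives $\dot Z(t) = z - \tfrac{t}{2}\sum_{i,j} A_{ij} x_i x_j$.

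The key observation is that the quadratic form $\sum_{i,j} A_{ij} x_i x_j$ vanishes because $A \in \so(2n)$ is skew-symmetric. Therefore $\dot Z(t) = z$ and $Z(t) = t z$. Evaluating at $t=1$ yields $\gamma(1)=(x,z)$, which proves $\mathrm{exp}_G(v)=(x,z)$ in the explicit coordinates. No real obstacle arises here — the only point that could fail for a non-skew $A$ is the cancellation of the quadratic term, and this is automatic in the contact setting.
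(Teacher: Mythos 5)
Your proof is correct and follows essentially the same route as the paper: integrate the Cauchy problem for the flow of $\sum_i x_i f_i + z f_0$ using the explicit representation \eqref{eq:explicitrep}, note $X(t)=tx$, and use the skew-symmetry of $A$ to kill the quadratic term so that $Z(t)=tz$, giving $\gamma(1)=(x,z)$. No gaps.
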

\begin{proof}
Assume that $p=(x,z)$ has exponential coordinates $(\theta, \rho)$. This means that $(x,z)=\gamma(1)$, where $\gamma(t) = (x(t),z(t))$ is the solution of the Cauchy problem
\begin{equation}
\dot{x}_i(t) = \theta_i, \qquad \dot{z}(t) = \rho + \frac{1}{2}\sum_{i,j=1}^{2n} x_i A_{ij} \theta_j, \qquad \gamma(0)=(0,0),
\end{equation}
By the skew-symmetry of $A$, the solution is $x(t) = \theta t$ and $z(t) = \rho t$. Then $(x,z) =(\theta,\rho)$.
\end{proof}


\section{The fibers of the exponential map for contact Carnot groups}\label{s:main}
Let $\hat{E}:T_0^*G\to G$ be the exponential map for the contact Carnot group whose (nonzero) structure constants for its Lie algebra are given by equation~\eqref{eq:matrixA}. 
In the following, we write $p \in G$ in exponential coordinates as $p = (x_1,\ldots,x_k,z)$, with $x_j \in \R^{2n_j}$ and analogously, for $\eta \in T_0^*G$, we write $\eta = (u_1,\ldots,u_k,\lambda)$, with $u_j \in \R^{2n_j}$. Thus:
\begin{equation}
\hat{E}(u_1,\ldots,u_k, \lambda)=(x_1, \ldots, x_{k}, z)\qquad\text{with}\qquad x_j, u_j \in \R^{2n_j}, \quad j=1, \ldots, k.
\end{equation}
When convenient, we write $p = (x,z)$ and $\eta =(u,\lambda)$, with $x,u \in \R^{2n}$ and $n = \sum_{j=1}^k n_j$.
 
\begin{proposition}\label{propo:exp}
With the above notation we have for every $j=1, \ldots, k$:
\begin{equation}\label{exp}
x_j=\left(\frac{\sin (\la \al_j)}{\la \al_j}\I+\frac{\cos(\la \al_j)-1}{\la \al_j}J\right)u_j\qquad \text{and}\qquad z=\sum_{j=1}^k\left(\frac{\la \al_j -\sin(\la \al_j)}{2\la^2\al_j}\right)\|u_j\|^2.
\end{equation}
If $\lambda = 0$, then $x_j = u_j$ for $j=1,\ldots,k$ and $z = 0$, i.e. $\hat{E}(u, 0)=(u,0)$.
\end{proposition}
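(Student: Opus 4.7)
The plan is to explicitly integrate the Hamiltonian flow in the coordinates supplied by the explicit representation of Eq.~\eqref{eq:explicitrep}, exploiting heavily the block structure of $A$. In canonical coordinates $(x,z,u,\lambda)$ on $T^*G$, using the identity $\langle\eta,f_i\rangle = u_i - \tfrac{1}{2}\sum_j A_{ij}x_j \lambda$, the Hamiltonian reads
\begin{equation}
H = \tfrac{1}{2}\|u - \tfrac{\lambda}{2} A x\|^2,
\end{equation}
and Hamilton's equations give $\dot\lambda = 0$ (so $\lambda$ is constant along the flow) and $\dot x = u - \tfrac{\lambda}{2}Ax$. Setting $v := \dot x$ and differentiating once more, the equations for $u$ and $x$ collapse to the single closed linear ODE $\dot v = -\lambda A v$, with initial condition $v(0)=u_0$ (since $x(0)=0$).

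First I would solve this linear ODE to obtain $v(t)=e^{-\lambda A t}u_0$, and then compute $x(t)=\int_0^t v(s)\,ds$. The matrix $A$ preserves each block $\R^{2n_j}$ and acts on it as $\alpha_j J$ with $J^2 = -\I$, so
\begin{equation}
e^{-\lambda\alpha_j J t} = \cos(\lambda\alpha_j t)\,\I - \sin(\lambda\alpha_j t)\,J.
\end{equation}
Integrating from $0$ to $1$ and restricting to block $j$ yields the claimed formula for $x_j$; this is the straightforward part.

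For the $z$ coordinate I would exploit the horizontality constraint $\alpha(\dot\gamma)=0$, which in our coordinates gives $\dot z = \tfrac{1}{2}\,x^\top A \dot x = \tfrac{1}{2}\,x^\top A v$. The useful trick is to eliminate $x$ by observing that the ODE $\dot v = -\lambda A v$ rewrites as $Av = -\dot v/\lambda$, and therefore
\begin{equation}
A x(t) = \int_0^t A v(s)\,ds = -\tfrac{1}{\lambda}\bigl(v(t) - u_0\bigr).
\end{equation}
Plugging this in, and using that $\|v(t)\|^2 = \|u_0\|^2$ (conservation of $H$), I obtain
\begin{equation}
\dot z = \tfrac{1}{2\lambda}\bigl(\|u_0\|^2 - u_0^\top v(t)\bigr).
\end{equation}
The bilinear form $u_0^\top e^{-\lambda A t} u_0$ splits as a sum over blocks, and on each block the $J$-term drops out by skew-symmetry, leaving $\cos(\lambda\alpha_j t)\|u_{0,j}\|^2$. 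Integrating in $t$ from $0$ to $1$ gives exactly the stated expression for $z$.

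The only subtlety is the case $\lambda=0$: all formulas have removable singularities (Taylor expansions of $\sin/\cos$ at $0$), but I would treat it separately by directly integrating $\dot v = 0$, which yields $v \equiv u_0$, hence $x=u_0$ and $\dot z = -\tfrac{1}{2} v^\top A x = -\tfrac{1}{2}t\, u_0^\top A u_0 = 0$ by skew-symmetry of $A$, so $z=0$. I do not expect a serious obstacle: the main care is bookkeeping with the block decomposition and using the skew-symmetry of $J$ at the right moments to kill the cross terms.
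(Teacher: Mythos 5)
Your proposal is correct, and it reaches the same two formulas by a genuinely more self-contained route. The paper simply quotes the closed-form expression of the exponential map from \cite{ABBHausdorff}, namely $x=\int_0^1 e^{-\la A t}u\,dt$ and $z=-\tfrac12\int_0^1\langle e^{-\la A t}u, A\int_0^t e^{-\la A s}u\,ds\rangle dt$, and then processes the $z$-component by writing it as a quadratic form $u^*Su$ with a double-integral matrix $S$, decomposing blockwise into $a(t)\I+b(t)J$ and $c(t)\I+d(t)J$, discarding the skew part, and integrating $(ac-bd)(t)$. You instead derive the flow from scratch: from $H=\tfrac12\|u-\tfrac{\la}{2}Ax\|^2$ you get $\dot\la=0$ and the closed ODE $\dot v=-\la Av$ for $v=\dot x$ (your unstated computation $\dot u=-\tfrac{\la}{2}Av$ indeed closes the system), which yields the same $x_j$ via the block exponential $e^{-\la\al_j Jt}=\cos(\la\al_j t)\I-\sin(\la\al_j t)J$; and for $z$ you replace the paper's double-integral bookkeeping with the identity $Ax(t)=-\tfrac1\la(v(t)-u_0)$ together with conservation of $\|v\|$, reducing everything to $\int_0^1(1-\cos(\la\al_j t))\,dt$, with the $J$-cross term killed by skew-symmetry exactly as in the paper. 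What your route buys is independence from the cited formula and a cleaner treatment of the $z$-coordinate (a single integral instead of an iterated one); what the paper's route buys is brevity, since the exponential map is taken as known and only elementary $2\times 2$-type matrix algebra remains. Your separate treatment of $\la=0$ ($v\equiv u_0$, $z=0$ by skew-symmetry) matches the statement and is fine.
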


\begin{proof}
We recall that the sub-Riemannian exponential map is given explicitly by \cite{ABBHausdorff}:
\begin{equation}
(u, \la)\mapsto\left(\int_{0}^1e^{-\la A t}udt, -\frac{1}{2}\int_{0}^1\left\langle e^{-\la A t}u, A \int_{0}^te^{-\la A s}u ds\right\rangle dt\right).
\end{equation}
We start by considering the horizontal components (we omit the subscript for $J=J_{n_j}$):
\begin{equation}
x_j=\int_{0}^1e^{-\la \al_j J t}u_jdt.
\end{equation}
If $\la=0$, then $e^{-\la \al_j J t}=\I$ and $x_j=u_j$; otherwise the expression for $x_j$ follows immediately from writing the integrand matrix as:
\begin{equation}\label{exp1}
e^{-\la \al_j J t}=\cos(\la \al_j t)\I-\sin(\la \al_j t)J.
\end{equation}
In fact using  \eqref{exp1} we can also evaluate the matrix integral:
\begin{equation}\label{inv2}
\int_0^te^{-\la \al_j J t}dt=\frac{\sin (\la \al_j t)}{\la \al_j}\I+\frac{\cos(\la \al_j t)-1}{\la \al_j}J=a(t)\I+b(t)J.
\end{equation}
For the $z$ component, we notice that it can be rewritten as $z=u^*Su$, where $S$ is the matrix:
\begin{equation}
S=-\frac{1}{2}\int_{0}^1 \int_{0}^te^{\la A t}Ae^{-\la A s} ds dt,
\end{equation}
and since $A$ is assumed to be block-diagonal, we obtain:
\begin{equation}
z=\sum_{j=1}^{k} u_j^*S_ju_j\qquad \text{with}\qquad S_j=-\frac{1}{2}\int_{0}^1 e^{\la \al_j Jt}\al_jJ\int_0^te^{-\la \al_j J s} ds dt,
\end{equation}
Notice that if $\la=0$ then $S=-\frac{1}{4}A$ and, being skew-symmetric, $z=u^*Su=0$. If $\la\neq 0$ the integrand matrix in $S_j$ equals, using \eqref{exp1}:
\begin{multline}\label{eq:cd}
e^{\la \al_j Jt}\al_jJ \int_0^te^{-\la \al_j J s} ds  = \left(\alpha_j \cos(\lambda \alpha_j t) J -\alpha_j \sin(\lambda \alpha_j t) \I\right)\left(a(t) \I + b(t)J\right) =\\
 = \left(c(t)\I + d(t)J\right)\left(a(t) \I + b(t)J\right) =  (ac-bd)(t)\I+(ad+bc)(t)J,
\end{multline}
where $c(t) = \alpha_j\cos(\lambda\alpha_j t)$ and $d(t) = -\alpha_j \sin(\lambda \alpha_j t)$. Since $\int (ad+bc)J$ is skew-symmetric:
\begin{equation}
u_j^*S_ju_j=u_j^*\I\left(-\frac{1}{2}\int_{0}^1(ac-bd)(t) dt\right)u =-\|u_j\|^2\frac{1}{2}\int_{0}^1(ac-bd)(t)dt.
\end{equation}
Using the explicit expression of  $a, b, c, d$ (given by \eqref{inv2} and \eqref{eq:cd}), we obtain $(ac-bd)(t)=\frac{\cos(\la \al_j t)-1}{\la}$, whose integral equals:
\begin{equation}
\int_{0}^1\frac{\cos(\la \al_j t)-1}{\la}dt=\frac{\sin(\la \al_j)-\la \al_j}{\la^2\al_j}.
\end{equation}
Substituting this into the above formula for $u_j^*Su_j$ concludes the proof.
\end{proof}

For all $j=1,\ldots,k$, we define the $2n_j \times 2n_j$ matrix:
\begin{equation}
I(\la \al_j)=\frac{\sin (\la \al_j)}{\la \al_j}\I +\frac{\cos(\la \al_j)-1}{\la \al_j}J,
\end{equation}
where $I(0) = \I$. I this way, equation \eqref{exp} reads $x_j=I(\al_j \la)u_j$.
\begin{proposition}\label{propo2}
Assume $\la \al_j \notin 2\pi\mathbb{Z}\setminus\{0\}$. Then $I(\la \al_j)$ is invertible with inverse:
\begin{equation}
I(\la \al_j)^{-1}=\frac{\la \al_j}{2}\cot \left(\frac{\la \al_j}{2}\right)\I+\frac{\la \al_j}{2}J,
\end{equation}
(if $\la \al_j=0$ we have $I(0)^{-1} = \I$). In particular if $x_j=I(\la \al_j)u_j,$ then:
\begin{equation}
\frac{\la \al_j -\sin(\la \al_j)}{2\la^2\al_j}\|u_j\|^2=\frac{\al_j}{8}\frac{\la \al_j-\sin(\la \al_j)}{\sin \left(\frac{\la \al_j}{2}\right)}\|x_j\|^2.
\end{equation}
Moreover if $\la \al_j\in 2\pi \mathbb{Z}\setminus \{0\}$, then $x_j=0$.
\end{proposition}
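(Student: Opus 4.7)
The plan is to exploit the two-dimensional commutative algebra structure generated by $\I$ and $J$ (with $J^2 = -\I$), which makes matrices of the form $a\I + bJ$ multiply via $(a\I + bJ)(c\I + dJ) = (ac - bd)\I + (ad + bc)J$. Everything will follow by writing $I(\lambda\alpha_j)$ in this form with $a = \sin(\lambda\alpha_j)/(\lambda\alpha_j)$ and $b = (\cos(\lambda\alpha_j)-1)/(\lambda\alpha_j)$, and then applying standard half-angle identities.

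To verify the inverse, I would multiply $I(\theta)$ (with $\theta = \lambda\alpha_j$) by the candidate $\frac{\theta}{2}\cot(\theta/2)\I + \frac{\theta}{2}J$ and compute the coefficients of $\I$ and $J$ separately. Using $\sin\theta = 2\sin(\theta/2)\cos(\theta/2)$ and $1-\cos\theta = 2\sin^2(\theta/2)$, the $\I$-coefficient reduces to $\cos^2(\theta/2) + \sin^2(\theta/2) = 1$ and the $J$-coefficient reduces to $\sin\theta/2 - \sin\theta/2 = 0$. The hypothesis $\theta \notin 2\pi\mathbb{Z}\setminus\{0\}$ is exactly what ensures $\sin(\theta/2)\neq 0$ so that $\cot(\theta/2)$ is well-defined (the case $\theta=0$ being trivial since $I(0)=\I$).

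For the norm identity, I would observe that since $J$ is orthogonal (being skew-symmetric with $J^2=-\I$) and $\langle x_j, Jx_j\rangle = 0$, for any $x_j \in \R^{2n_j}$ we have
\begin{equation}
\|(a\I + bJ)x_j\|^2 = (a^2+b^2)\|x_j\|^2.
\end{equation}
Applying this to $u_j = I(\theta)^{-1}x_j$ with $a = \tfrac{\theta}{2}\cot(\theta/2)$ and $b=\tfrac{\theta}{2}$ gives $a^2+b^2 = (\theta/2)^2/\sin^2(\theta/2) = \theta^2/(4\sin^2(\theta/2))$. Substituting $\|u_j\|^2 = \frac{(\lambda\alpha_j)^2}{4\sin^2(\lambda\alpha_j/2)}\|x_j\|^2$ into the left-hand side $\frac{\lambda\alpha_j - \sin(\lambda\alpha_j)}{2\lambda^2\alpha_j}\|u_j\|^2$ yields $\frac{\alpha_j}{8}\frac{\lambda\alpha_j - \sin(\lambda\alpha_j)}{\sin^2(\lambda\alpha_j/2)}\|x_j\|^2$, which is the claimed identity (matching $\alpha_j g(\lambda\alpha_j)\|x_j\|^2$ with $g$ as defined in the introduction).

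For the final assertion, when $\theta = \lambda\alpha_j \in 2\pi\mathbb{Z}\setminus\{0\}$ we have $\sin\theta = 0$ and $\cos\theta - 1 = 0$, so $I(\theta)$ is the zero matrix; hence $x_j = I(\theta)u_j = 0$ for any $u_j$. There is no real obstacle to this proposition — the only subtlety is keeping the half-angle algebra clean, and making sure the degenerate case $\theta = 0$ is handled separately so that $\cot$ never appears with a vanishing argument.
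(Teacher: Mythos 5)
Your proposal is correct and follows essentially the same route as the paper: both exploit that matrices of the form $a\I+bJ$ form a commutative algebra, use half-angle identities, and derive the norm identity from $\|u_j\|^2=(c_1^2+c_2^2)\|x_j\|^2$ via the orthogonality of $x_j$ and $Jx_j$, with the final claim coming from $I(\la\al_j)=0$ when $\la\al_j\in 2\pi\Z\setminus\{0\}$. The only (harmless) difference is that you certify the inverse by multiplying out the candidate, whereas the paper checks invertibility through the determinant $\det I(\la\al_j)\propto 1-\cos(\la\al_j)$ and then states the inverse; your version in fact verifies the explicit formula, and you correctly land on the squared sine $\sin^2\!\left(\tfrac{\la\al_j}{2}\right)$ in the denominator, consistent with the function $g$.
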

\begin{proof}
The determinant if $I(\la \al_j)$ is: 
\begin{equation}
\det I(\la \al_j)=2\left(\frac{1-\cos (\la \al_j)}{\la^2\al_j^2}\right),
\end{equation}
and is nonzero if and only if  $\frac{\la \al_j}{2\pi} \notin \mathbb{Z}\setminus{0}$; in this case the matrix $I(\la \al_j)^{-1}$ is well defined.

For the second part of the statement we write $I(\la \al_j)^{-1}=c_1\I+ c_2J$, where $c_1=\frac{\la \al_j}{2}\cot \left(\frac{\la \al_j}{2}\right)$ and $c_2=\frac{\la \al_j}{2}$. Then, $u_j=c_1x_j+c_2Jx_j$ and since $x_j$ and $Jx_j$ are orthogonal we obtain:
\begin{equation}
\|u_j\|^2=c_1^2\|x_j\|^2+c_2^2\|Jx_j\|^2=(c_1^2+c_2^2)\|x_j\|^2.
\end{equation}
Computing $c_1^2+c_2^2=(\frac{\la \al_j}{2}\frac{1}{\sin (\la \al_j /2)})^2$, and setting $y=\la \al_j$ we finally obtain:
\begin{equation}
\frac{y -\sin y}{2 y^2/\al_j}\|u_j\|^2 =\frac{y -\sin y}{2 y^2/\al_j} \left(\frac{y}{2}\frac{1}{\sin (y/2)}\right)^2\|x_j\|^2=\frac{\al_j}{8}\frac{y-\sin y}{ (\sin\frac{y}{2})^2}.
\end{equation}
The last statement follows immediately by Eq.~\eqref{exp}.
\end{proof}

\subsection{A relevant function}

We introduce the function $g:\R\to \R\cup\{\infty\}$ defined by:
\begin{equation}
g(\la)=\frac{1}{8}\frac{\la-\sin \la}{\left(\sin \frac{\la}{2}\right)^2}.
\end{equation}
Each pole of $g$ is of order two and lies on $2\pi \mathbb{Z}\setminus \{0\}$ (see Fig.~\ref{fig:g} in Sec.~\ref{s:intro} and Fig.~\ref{fig:detailg}). The proof of the following proposition is left to the reader.

\begin{figure}
\centering
{\psset{algebraic,plotpoints=10000,plotstyle=line,xunit=0.8cm,yunit=0.8cm}
\begin{pspicture}(4,-1.3)(14,4)
\psline[linewidth=0.02cm]{->}(4,-1)(14,-1) 		
\psline[linewidth=0.02cm]{->}(4,-1)(4,4)		
\psline[linewidth=0.01cm,linestyle=dashed](6.28319,-1)(6.28319,4)	
\psline[linewidth=0.01cm,linestyle=dashed](12.5664,-1)(12.5664,4)	
\psplot[yMaxValue=4]{4}{14}{1/8*(x-sin(x))/(sin(x/2))^2}			
\psplot[yMaxValue=4]{4}{14}{2.5/8*(x-8.9868)+1.12335}					
\psline[linewidth=0.01cm,linestyle=dashed](8.9868,-1)(8.9868,1.12335)	
\psline[linewidth=0.01cm,linestyle=dashed](10.05,-1)(10.05,1.48)	
\rput[c](6.28,-1.3){\scriptsize$2k\pi$}
\rput[c](12.7,-1.3){\scriptsize$2(k+1)\pi$}
\rput[c](8.9868,-1.3){\scriptsize$\mu_k$}
\rput[c](10.05,-1.3){\scriptsize$\lambda_k$}
\rput[l](14,-1){$\lambda$}
\rput[l](13.2,2.2){$y=\frac{\lambda}{8}$}
\rput[c](4.2,3.8){$y$}
\end{pspicture}}
\caption{Detail of the function $g(\lambda)$ in the interval $I_k = (2k\pi,2k\pi+2\pi)$.
}\label{fig:detailg}
\end{figure}
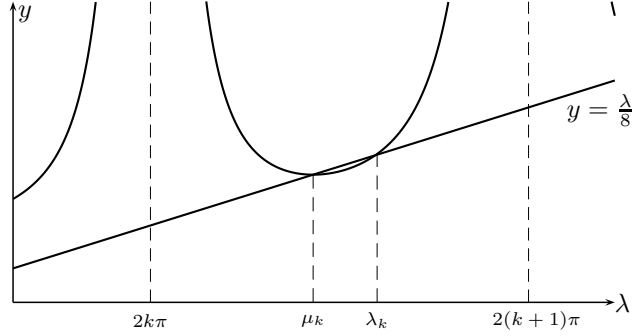

\begin{proposition}\label{g}
Let $k\in \mathbb{Z}$ and $I_k=(2k\pi,2k\pi+2\pi)$. Then:
\begin{itemize}
\item[1.] $g(\la)=-g(-\la)$ and $g(\la)>0$ if $\la>0$;
\item[2.] $|g|$ is strictly convex on each interval $I_k;$ 
\item[3.]  if $\mu_k$ is the point of minimum of $|g|$ on $I_k$, we have $g(\mu_k)=\frac{\mu_k}{8}<\frac{(2k+1)\pi}{8}$;
\item[4.]  $g(|\la|)>\frac{|\la|}{8}-\frac{\pi}{8}$ for every $\lambda$.

\end{itemize}\end{proposition}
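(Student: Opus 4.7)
My plan is to address the four items in the order $(1), (3), (2), (4)$, exploiting throughout the equivalent form $g(\lambda) = (\lambda - \sin\lambda)/[4(1-\cos\lambda)]$ and standard half-angle identities, and using oddness to reduce most computations to $\lambda > 0$. Item $(1)$ is immediate from this form: replacing $\lambda$ by $-\lambda$ flips the sign of the numerator and preserves the denominator, proving oddness; and for $\lambda>0$ the numerator $\lambda-\sin\lambda$ is strictly positive because its derivative $1-\cos\lambda$ is nonnegative and vanishes only on a discrete set.

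For item $(3)$ I would first compute
\begin{equation}
g'(\lambda) = \frac{2(1-\cos\lambda) - \lambda\sin\lambda}{4(1-\cos\lambda)^2}
\end{equation}
by the quotient rule. Applying $1-\cos\lambda = 2\sin^2(\lambda/2)$ and $\sin\lambda = 2\sin(\lambda/2)\cos(\lambda/2)$, the numerator factors as $2\sin(\lambda/2)[2\sin(\lambda/2) - \lambda\cos(\lambda/2)]$; thus on the domain of $g$, the equation $g'(\lambda)=0$ is equivalent to $\tan(\lambda/2)=\lambda/2$. For $k\geq 1$ and $\lambda\in I_k$, a standard monotonicity-plus-IVT argument comparing $\tan(\lambda/2)$ to the line $y=\lambda/2$ on $\lambda/2\in(k\pi,(k+1/2)\pi)$ (the right half contributes nothing, since $\tan<0$ there while $\lambda/2>0$) yields a unique root $\mu_k$ located in $(2k\pi,(2k+1)\pi)$, giving the bound $\mu_k<(2k+1)\pi$. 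To compute $g(\mu_k)$, multiply $2\sin(\mu_k/2)=\mu_k\cos(\mu_k/2)$ by $\cos(\mu_k/2)$ to obtain $\sin\mu_k=\mu_k\cos^2(\mu_k/2)$, hence $\mu_k-\sin\mu_k=\mu_k\sin^2(\mu_k/2)$, and substitution yields $g(\mu_k)=\mu_k/8$; the case $\lambda<0$ follows by oddness.

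For item $(2)$, a further differentiation gives
\begin{equation}
g''(\lambda) = \frac{\lambda(2+\cos\lambda) - 3\sin\lambda}{4(1-\cos\lambda)^2},
\end{equation}
so strict convexity of $|g|$ on every $I_k$ reduces, via oddness, to showing $P(\lambda):=\lambda(2+\cos\lambda)-3\sin\lambda>0$ for all $\lambda>0$ in the domain. I would split into three ranges. For $\lambda\geq 2\pi$, $2+\cos\lambda\geq 1$ and $|\sin\lambda|\leq 1$ give $P(\lambda)\geq\lambda-3>0$. On $(0,2\pi)$, $P(0)=0$, so it suffices to show that $N(\lambda):=P'(\lambda)=2(1-\cos\lambda)-\lambda\sin\lambda$ is positive there: on $(0,\pi)$, differentiating twice gives $N''(\lambda)=\lambda\sin\lambda>0$, which combined with $N(0)=N'(0)=0$ forces $N>0$; on $[\pi,2\pi)$ both terms defining $N$ are nonnegative because $\sin\lambda\leq 0$, so $N>0$ there as well.

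Finally, for item $(4)$, assume $\lambda>0$ by oddness. For $\lambda\leq\pi$, $\lambda/8-\pi/8\leq 0<g(\lambda)$. For $\lambda>\pi$, clearing denominators reduces the inequality to $\lambda-\sin\lambda>(\lambda-\pi)\sin^2(\lambda/2)$, which follows from the chain
\begin{equation}
\lambda-\sin\lambda \geq \lambda-1 > \lambda-\pi \geq (\lambda-\pi)\sin^2(\lambda/2),
\end{equation}
using $\sin\lambda\leq 1<\pi$ and $\sin^2(\lambda/2)\leq 1$ together with $\lambda-\pi>0$; the poles of $g$ are trivial since $g=+\infty$ there. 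The main obstacle is the positivity of $P$ in item $(2)$: the second-derivative computation hides a significant cancellation (the factor $(1-\cos\lambda)^2$ drops out once), and $P$ vanishes to order five at the origin, so a single pointwise bound does not work in one pass — the three-range decomposition combined with the repeated differentiation of $N$ on $(0,\pi)$ is what keeps the proof elementary.
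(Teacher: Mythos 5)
Your proposal is correct, and since the paper explicitly leaves the proof of this proposition to the reader, it simply fills in the omitted argument: the reduction $g(\la)=\frac{\la-\sin\la}{4(1-\cos\la)}$, the formulas $g'(\la)=\frac{2(1-\cos\la)-\la\sin\la}{4(1-\cos\la)^2}$ and $g''(\la)=\frac{\la(2+\cos\la)-3\sin\la}{4(1-\cos\la)^2}$ (I checked the cancellation of one factor of $1-\cos\la$), the identification of the critical points with $\tan(\la/2)=\la/2$ and the value $g(\mu_k)=\mu_k/8$ with $\mu_k\in(2k\pi,(2k+1)\pi)$, the three-range positivity argument for $P(\la)=\la(2+\cos\la)-3\sin\la$, and the elementary chain for item 4 are all sound. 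Two cosmetic points: in item 3 you should add the one-line remark that for $k\geq 1$ the minimum of $|g|$ on $I_k$ is attained in the interior (since $g\to+\infty$ at both poles) and is therefore the unique critical point you found; and your ``by oddness'' treatment of negative $k$ proves the statement in the only sensible symmetric reading (the literal inequality $g(\mu_k)<\frac{(2k+1)\pi}{8}$ is mis-signed for $k\leq -2$, and for $k=0,-1$ the infimum is attained only at the removable singularity $\la=0$), which is the interpretation the paper actually uses, as only $\la>0$ enters the later estimates.
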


\subsection{Decomposition of the fiber}
We introduce the notation:
\begin{equation}
\Gamma(p)=\hat{E}^{-1}(x,z), \qquad p=(x,z).
\end{equation}
Since $p\neq p_0$, then $\Gamma(p)$ consists of all geodesics ending at $p$.
Given $\alpha_1, \ldots, \alpha_k$ we define:
\begin{equation}
\Lambda_j=\{\text{poles of $\lambda \mapsto g(\la \al_j)$}\},\qquad \Lambda=\bigcup_{j=1}^k\Lambda_j\qquad\text{and}\qquad I_0=\{j\,|\, x_j=0\}.
\end{equation}
Prop. \ref{propo2} implies that, if $(u,\lambda) \in \Gamma(p)$, then: 
\begin{equation}\label{I0} 
L(\la):=\{j \mid \la \in \Lambda_j\}\subseteq I_0.
\end{equation}

\begin{proposition}[Characterization of the fiber]\label{t:fiber}
Let $p=(x,z)\in G$, $p\neq (0,0)$. The set $\Gamma(p)$ consists of the points $(u, \la)$ such that $x_j=I(\la \al_j)u_j$ for every $j=1, \ldots, k$ and:
\begin{equation}\label{descript} 
z=\sum_{j\notin I_0}\al_jg(\la \al_j)\|x_j\|^2+\frac{1}{2\la }\sum_{j\in I_0}\|u_j\|^2.
\end{equation}
\end{proposition}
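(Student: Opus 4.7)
The plan is to start from the explicit formulas in Proposition~\ref{propo:exp} and rewrite the $z$-component of the exponential map by splitting the sum according to whether the index $j$ belongs to $I_0 = \{j \mid x_j = 0\}$, using Proposition~\ref{propo2} to convert $u_j$-data into $x_j$-data whenever this is possible. The first condition $x_j = I(\la\al_j) u_j$ for every $j$ is literally the first formula of Proposition~\ref{propo:exp}, so the entire content is in verifying the expression for $z$.

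For the indices $j \notin I_0$, the hypothesis $x_j \neq 0$ together with the last sentence of Proposition~\ref{propo2} rules out $\la\al_j \in 2\pi\Z \setminus \{0\}$, so $I(\la\al_j)$ is invertible. The identity spelled out in the second part of Proposition~\ref{propo2} then converts the corresponding summand in the $z$-formula of Proposition~\ref{propo:exp} from $\tfrac{\la\al_j - \sin(\la\al_j)}{2\la^2\al_j} \|u_j\|^2$ into $\al_j g(\la\al_j) \|x_j\|^2$, by the definition of $g$. This accounts exactly for the first sum in \eqref{descript}.

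For the indices $j \in I_0$, the key observation is that in both possible subcases the same summand collapses to the common form $\tfrac{1}{2\la} \|u_j\|^2$. Indeed, if $\la\al_j \in 2\pi\Z \setminus \{0\}$, then $\sin(\la\al_j) = 0$ and the prefactor $\tfrac{\la\al_j - \sin(\la\al_j)}{2\la^2\al_j}$ is literally $\tfrac{1}{2\la}$; if instead $\la\al_j \notin 2\pi\Z \setminus \{0\}$, then $I(\la\al_j)$ is invertible, so the constraint $x_j = 0$ forces $u_j = 0$, and both the original prefactor and $\tfrac{1}{2\la}$ multiply the vanishing $\|u_j\|^2$ to $0$. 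Summing over $j \in I_0$ then gives the second contribution to~\eqref{descript}.

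I expect the uniform-form analysis for $j \in I_0$ to be the only slightly non-routine step: it is precisely what allows the $I_0$-part of the formula to carry the clean prefactor $\tfrac{1}{2\la}$ with no dependence on $\al_j$, rather than having to treat resonant indices (where $\la\al_j \in 2\pi\Z$) separately from non-resonant ones. The degenerate case $\la = 0$ is compatible: the first condition forces $u = x$, hence $u_j = 0$ for $j \in I_0$, and the limit $g(0) = 0$ makes~\eqref{descript} reduce to the correct identity $z = 0$ given by Proposition~\ref{propo:exp}.
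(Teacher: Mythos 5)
Your proposal is correct and follows essentially the same route as the paper: the condition $x_j=I(\la\al_j)u_j$ comes straight from Prop.~\ref{propo:exp}, the terms with $j\notin I_0$ are converted via Prop.~\ref{propo2} into $\al_j g(\la\al_j)\|x_j\|^2$, and your case split for $j\in I_0$ (resonant indices give the prefactor $\tfrac{1}{2\la}$ exactly, non-resonant ones force $u_j=0$) is precisely the paper's splitting of $I_0$ into $I_0\cap L(\la)$ and its complement. The extra remark on $\la=0$ is a harmless addition not needed for the argument.
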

\begin{proof}
The condition on the $x_j$'s is given by Prop. \ref{propo:exp} and it remains to understand the equation for $z$ in \eqref{exp}. Now we can decompose the summation in the terms defining $z$ as:
\begin{equation}
\label{parti}z=\sum_{j\notin I_0}\left(\frac{\la \al_j -\sin(\la \al_j)}{2\la^2\al_j}\right)\|u_j\|^2+\sum_{j\in  I_0}\left(\frac{\la \al_j -\sin(\la \al_j)}{2\la^2\al_j}\right)\|u_j\|^2.
\end{equation}
If $j\notin I_0$ then $j\notin L(\la)$ by \eqref{I0} and Prop. \ref{propo2} allows to write:
\begin{equation}
\left(\frac{\la \al_j -\sin(\la \al_j)}{2\la^2\al_j}\right)\|u_j\|^2=\frac{\al_j}{8}\frac{\la \al_j-\sin(\la \al_j)}{\sin \left(\frac{\la \al_j}{2}\right)}\|x_j\|^2.
\end{equation}
On the other hand the sum $\sum_{j\in  I_0}\left(\frac{\la \al_j -\sin(\la \al_j)}{2\la^2\al_j}\right)\|u_j\|^2$ can be split as:
\begin{equation}
\sum_{j\in  I_0\cap L(\la)}\left(\frac{\la \al_j -\sin(\la \al_j)}{2\la^2\al_j}\right)\|u_j\|^2+\sum_{j\in  I_0\cap L(\la)^{c}}\left(\frac{\la \al_j -\sin(\la \al_j)}{2\la^2\al_j}\right)\|u_j\|^2.
\end{equation}
The second summation is zero, because for a $j\notin L(\la)$ the matrix $I(\la \al_j)$ is invertible and $u_j=I(\la \al_j)x_j=0$. By~$\eqref{I0}$, the index set for the first summation equals $L(\la)$ itself. Moreover, for each term $j \in L(\lambda)$ we have $\la \al_j\in 2\pi\mathbb{Z}\setminus\{0\}$ and, for some $k_j \in \Z\setminus\{0\}$:
\begin{equation}
\frac{\la \al_j -\sin(\la \al_j)}{2\la^2\al_j}= \frac{2\pi k_j -\sin(2\pi k_j)}{2\la (2\pi k_j)}=\frac{1}{2\la}.
\end{equation}
Substituting what we got into \eqref{parti} we finally obtain:
\begin{equation}
z=\sum_{j\notin I_0}\al_jg(\la \al_j)\|x_j\|^2+\frac{1}{2\la }\sum_{j\in I_0}\|u_j\|^2.\qedhere
\end{equation}
\end{proof}

We decompose $\Gamma(p)$ into two closed disjoint subsets, reflecting its ``discrete'' and ``continuous'' part.
We set indeed $\Gamma(p)=\Gamma_0(p)\cup \Gamma_{\infty}(p)$ where: 
\begin{equation}
\Gamma_0(p)=\left\{(u,\la)\in \Gamma(p)\,\bigg|\, \sum_{j\in I_0}\|u_j\|^2= 0\right\}\qquad \text{and}\qquad \Gamma_\infty(p)=\Gamma_0(p)^c.
\end{equation}
The next theorem clarifies the subscripts and the terminology ``discrete'' and ``continuous'' part.
\begin{theorem}\label{thm:disjoint}
If $p\neq p_0$, the set $\Gamma_0(p)$ is finite and $\Gamma_\infty(p)$ is a closed set homeomorphic to:
\begin{equation}
\Gamma_\infty(p) \simeq \bigcup_{\lambda \in \Lambda_p} S^{2N(\lambda)-1}, \qquad N(\lambda)=\sum_{j \in L(\lambda)} n_j,
\end{equation}
where:
\begin{equation}\label{lap}
\Lambda_p=\left\{\la\in \Lambda \, \bigg|\,\left(z - \sum_{j\notin I_0}\al_jg(\la \al_j)\|x_j\|^2\right)\lambda > 0\right\}.
\end{equation}
Moreover the energy function $J$ is constant on each component of $\Gamma(p)$. 
\end{theorem}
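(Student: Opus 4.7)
The plan is to exploit Proposition \ref{t:fiber} and analyze separately the ``discrete'' and ``continuous'' pieces of $\Gamma(p)$, before concluding with the energy statement. Given $(u,\lambda) \in \Gamma(p)$, the relation $x_j = I(\lambda\alpha_j) u_j$ together with the observation that $I(\lambda\alpha_j)$ is invertible exactly when $j \notin L(\lambda)$ (and is the zero matrix when $j \in L(\lambda)$) splits the components of $u$ into: \emph{(i)} for $j \notin L(\lambda)$, a uniquely determined value $u_j = I(\lambda\alpha_j)^{-1} x_j$ (in particular $u_j=0$ when $j \in I_0$); \emph{(ii)} for $j \in L(\lambda)$, a free vector in $\R^{2n_j}$. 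By \eqref{I0} a fiber point requires $L(\lambda)\subseteq I_0$, and the $z$-equation of Proposition \ref{t:fiber} then reduces to the single scalar constraint
\[
\sum_{j \in L(\lambda)} \|u_j\|^2 \;=\; 2\lambda\, Q(\lambda,x),\qquad Q(\lambda,x) := z - \sum_{j \notin I_0} \alpha_j g(\lambda\alpha_j) \|x_j\|^2.
\]

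For the \emph{finiteness of $\Gamma_0(p)$}, the defining condition $\sum_{j \in I_0} \|u_j\|^2 = 0$ forces $u_j=0$ for all $j \in I_0$, in particular for $j \in L(\lambda)$; the constraint above then becomes $F(\lambda) := \sum_{j \notin I_0} \alpha_j g(\lambda\alpha_j)\|x_j\|^2 = z$. If $x=0$ then $F\equiv 0$ and $p\neq p_0$ gives $z\neq 0$, so there are no solutions. Otherwise, the asymptotic estimate $g(|\lambda|) > |\lambda|/8 - \pi/8$ from Proposition \ref{g}(4) shows $|F(\lambda)|\to \infty$ as $|\lambda|\to \infty$ along any sequence avoiding $\bigcup_{j \notin I_0}\Lambda_j$; hence $F(\lambda)=z$ can hold only in a bounded subset. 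On each of the finitely many connected components of analyticity of $F$ contained in that bounded set, $F$ is real analytic and blows up at the pole endpoints, so $F-z$ has only finitely many zeros on each; thus $\Gamma_0(p)$ is finite. This growth-plus-analyticity step is the main obstacle, since the poles of $g$ prevent treating $F$ as a single real-analytic function on $\R$ and one must rule out accumulation both at infinity and near the poles.

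For the \emph{structure of $\Gamma_\infty(p)$}, a fiber point with $\sum_{j \in I_0}\|u_j\|^2 > 0$ must have $L(\lambda)\cap I_0 \neq \emptyset$, i.e.\ $\lambda \in \Lambda$ with $L(\lambda)\subseteq I_0$. The displayed constraint has non-trivial solutions precisely when $\lambda\, Q(\lambda,x) > 0$, i.e.\ $\lambda \in \Lambda_p$, in which case the solution set is a Euclidean sphere of radius $\sqrt{2\lambda\, Q(\lambda,x)}$ in $\bigoplus_{j \in L(\lambda)}\R^{2n_j}$, of real dimension $2N(\lambda)-1$. Since distinct elements of $\Lambda_p$ occupy distinct $\lambda$-coordinates in $T_0^*G$, the resulting spheres are pairwise disjoint, yielding the asserted homeomorphism.

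For \emph{closedness of $\Gamma_\infty(p)$ in $T_0^*G$}, the set $\Lambda_p \subseteq \Lambda$ is discrete, so an accumulation point can only exist if $\Lambda_p$ is infinite; but then $|\lambda_n|\to\infty$ along any enumeration and the accumulation would occur at infinity. Finally, \emph{constancy of the energy} follows from the identity $J(\gamma_\eta)= 2H(\eta_0) = \|u\|^2$ (conservation of $H$ along the Hamiltonian flow, evaluated at $t=0$): on the sphere at $\lambda\in\Lambda_p$ the quantity $\|u\|^2$ decomposes as $\sum_{j \notin L(\lambda)}\|u_j\|^2 + 2\lambda\,Q(\lambda,x)$, both summands depending only on $\lambda$ and $x$, hence constant along the sphere; on the isolated points of $\Gamma_0(p)$ there is nothing to check.
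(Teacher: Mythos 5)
Your proposal is correct and follows essentially the same route as the paper: both reduce everything to Proposition~\ref{t:fiber}, observe that for $j\notin L(\lambda)$ the component $u_j$ is forced while for $j\in L(\lambda)\subseteq I_0$ the single scalar constraint cuts out a sphere of dimension $2N(\lambda)-1$ over the discrete set $\Lambda_p$, and deduce constancy of the energy from the fact that $\|u\|^2$ is determined by $\lambda$ and $x$ on each component. The only point where you diverge is the finiteness of $\Gamma_0(p)$: the paper uses strict convexity of $G_0(\lambda)=\sum_{j\notin I_0}\alpha_j g(\lambda\alpha_j)\|x_j\|^2$ (at most two intersections with the level $z$ on each interval between consecutive poles), together with the bound $|\lambda|<\rho$ established in the proof of Theorem~\ref{thm:upper}, whereas you combine the same growth estimate of Proposition~\ref{g}(4) with real-analyticity of $F$ and its blow-up at the poles to exclude accumulation of solutions both at infinity and at the poles; this is equally valid, and has the small expository advantage of not forward-referencing Theorem~\ref{thm:upper}.
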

\begin{remark}\label{r:emptyvanish}
By definition, $\Gamma_\infty(p) \neq \emptyset$ implies $I_0\neq \emptyset$. Thus, a necessary condition for occurrence of families of geodesics ending at $p=(x,z)$ is that some of the components $x_j$ must vanish.
\end{remark}
\begin{proof}
We start noticing that if $(u,\la)\in \Gamma_0(p)$ then all the $u_j$'s are determined. In fact if $j\notin I_0$ then, by~\eqref{I0}, $j \notin L(\lambda)$, $I(\al_j \la)$ is invertible and $u_j=I(\al_j \la)^{-1}x_j$; if $j\in I_0$, then the condition $\sum_{j\in I_0}\|u_j\|^2= 0$ implies $u_j=0$.

Consider now the projection $q$ onto the $\la$-axis:
\begin{equation} 
q:T^*_0G\to \R,\qquad (u,\la)\mapsto \la.
\end{equation}
By the above discussion $q|_{\Gamma_0(p)}$ is one-to-one onto its image $q(\Gamma_0(p))$ and it is enough to show that this last set is discrete. To this end we notice that by Prop. \ref{t:fiber} if $(u, \la)\in\Gamma_0(p)$ then:
\begin{equation}\label{add} 
z=\sum_{j\notin I_0}\al_j g(\al_j\la)\|x_j\|^2.
\end{equation}
The set of solutions in $\la$ of this equation coincides with $q(\Gamma_0(p))$ and is discrete: $(x,z)$ is fixed, the function $g$ is strictly convex (by Prop. \ref{g}) and a linear combination of strictly convex functions is still strictly convex (on the domains of definition). Since the set of solutions of \eqref{add} has no accumulation points, $q(\Gamma_0(p))$ is closed and $\Gamma_0(p)=q^{-1}(q(\Gamma_0(p)))$ is closed as well.

We prove that $\Gamma_0(p)$ is finite. If $x\neq 0$ the cardinality of $\Gamma_0(p)$ is bounded by Thm. \ref{thm:upper} below; if $x=0$ then equation \eqref{descript} reduces to $z=\frac{1}{2\la}\sum_{j\in I_0}\|u_j\|^2$ and since $\Gamma_0$ is defined by $\sum_{j\in I_0}\|u_j\|^2=0$, it implies $z=0$ as well, contradicting the assumption $p\neq p_0$.

Now we turn to $\Gamma_\infty(p)$. For each fixed $\la \in q(\Gamma_\infty(p))$ consider the fiber of the projection (the set of pairs $(u, \la)\in \Gamma_{\infty}(p)$). We show that $\la \in \Lambda_p$ and that the fiber is a sphere. By Prop.~\ref{t:fiber}, this is the set of $u \in \R^{2n}$ such that $x_j = I(\lambda\alpha_j) u_j$ for every $j=1,\ldots,k$ and:
\begin{equation}\label{eq:above}
\frac{1}{2\lambda} \sum_{j \in I_0}\|u_j\|^2 = z - \sum_{j \notin I_0} \alpha_j g(\lambda\alpha_j)\|x_j\|^2.
\end{equation}
Now, if $j\notin L(\la)$, then $u_j$ is fixed by the value of $x_j$ (since $I(\al_j \la)$ is invertible). For the remaining ones the only constraint comes from Eq.~\eqref{eq:above}. Consider the summation in the l.h.s. Notice that $L(\lambda) \subseteq I_0$, but if $j \in I_0\cap L(\lambda)^c$ then $u_j = 0$. Therefore:
\begin{equation}\label{eq:linkindex}
\sum_{j\in I_0}\|u_j\|^2 =\sum_{j\in L(\lambda)}\|u_j\|^2.
\end{equation}
In particular, since $(u,\lambda) \in \Gamma_\infty(p)$ this implies that $L(\lambda)$ must be non-empty, namely $\lambda \in \Lambda$. Moreover Eq.~\eqref{eq:above} reduces to:
\begin{equation}\label{eq:below}
\frac{1}{2\lambda} \sum_{j \in L(\lambda)}\|u_j\|^2 = z - \sum_{j \notin I_0} \alpha_j g(\lambda\alpha_j)\|x_j\|^2.
\end{equation}
The r.h.s. of the above equation has the same sign of $\lambda$. Thus $\lambda \in \Lambda_p$ and $q^{-1}(\lambda)$ is a sphere of dimension $2N(\lambda)-1$. 

Finally $q$ is surjective over $\Lambda_p$. In fact, for any $\lambda \in \Lambda_p$, we choose for $j \in L(\lambda)$, $u_j$ that satisfies~\eqref{eq:below}, and for $j\notin L(\lambda)$ we set $u_j= I(\alpha_j\la)^{-1}x_j$. The point $(u,\lambda) \in \Gamma_\infty(p)$ by construction.

The image $q(\Gamma_\infty(p))$ is discrete, as it is contained into $\Lambda$ (and has no accumulation points, since $\Lambda$ itself has no accumulation points). Thus $q(\Gamma_\infty(p))$ is closed and $\Gamma_\infty(p)$ is closed as well.

Since the energy of a geodesic $(u,\la)$ is given by $\|u\|^2/2$, it is constant on each component.
\end{proof}


\section{Upper bounds}\label{s:upperbound}
Let us introduce the following ``counting'' functions $\hat{\nu}, \hat{\beta}:G\to \R\cup\{\infty\}$:
\begin{equation}
\hat{\nu}(p)=\#\Gamma(p) \qquad \textrm{and}\qquad \hat{\beta}(p)=b\left(\Gamma(p)\right),
\end{equation}
where $b(X)$ denotes the sum of the Betti numbers of $X$ (which might as well be infinite a priori).
\begin{remark}The Betti numbers $b_i(X)$ of a topological space $X$ are the ranks of $H_i(X, \mathbb{Z})$ (the homology groups of $X$) and they measure the number of ``holes'' of $X$, see \cite{Hatcher}. For example for a point or a line all $b_i$ are zeroes except $b_0=1$; for a sphere $S^{k} $ they are all zero except $b_0, b_k=1$ (here $k>1$). The sum of the Betti numbers $b(X)$ is sometimes called the \emph{homological complexity} and measure how complicated $X$ is from the topological viewpoint; for example $b(S^k)=2$.
\end{remark}
If $\hat{E}^{-1}(p)$ is finite, then $\hat{\nu}(p)=\hat{\beta}(p)$; on the other hand if a point $p$ has infinitely many geodesics arriving on it $\hat{\nu}(p)=\infty$ and it could either be that they are ``genuinely" infinite, i.e. also $\hat{\beta}(p)=\infty$, or they arrange in finitely many families with controlled topology, i.e. $\hat{\beta}(p)<\infty$.

\begin{theorem}\label{thm:upper}
Let $G$ be a contact Carnot group. Then there exists a constant $R_2$ such that, for every point $p=(x,z)$, with $p\neq p_0$:
\begin{equation}
\hat{\beta}(p)\leq \left( \frac{8k}{\pi} \frac{\al_k}{\al_1^2}\right)\frac{|z|\phantom{^2}}{\|x\|^2}+R_2.
\end{equation}
$R_2$ is homogeneous of degree $0$ in the singular values $\alpha_1<\cdots<\alpha_k$ of $A$. In particular, if $x=(x_1,\ldots,x_k)$ has all components different from zero, then $\Gamma(p)=\Gamma_0(p)$ and:
\begin{equation}
\hat{\nu}(p)\leq \left(  \frac{8k}{\pi} \frac{\al_k}{\al_1^2}\right)\frac{|z|\phantom{^2}}{\|x\|^2}+R_2.
\end{equation}
\begin{remark}\label{r:convention}
Thus, whenever at least one $x_j$ is not zero, the topology of $\Gamma(p)$ is finite; if $z\neq 0$ and $x=0$, then the above formulas are meaningful in the sense that $\frac{|z|}{0}=\infty$.
\end{remark}
\begin{proof}
The decomposition of Thm. \ref{thm:disjoint} implies:
\begin{equation}
b \left(\Gamma(p)\right)=b \left(\Gamma_0(p)\right)+b \left(\Gamma_\infty(p)\right).
\end{equation}
Let us start with $b(\Gamma_0(p))$. Since $\Gamma_0(p)$ consists of points, then $b(\Gamma_0(p))=\#\Gamma_0(p)$ and:
\begin{equation} \label{sol}
\# \Gamma_0(p) =\#\left \{\lambda\, \bigg|\, z=\sum_{j\notin  I_0}\al_jg(\la \al_j)\|x_j\|^2\right\}.
\end{equation}
We recall that $I_0=\{j\, |\,x_j=0\}$ and distinguish two cases. 

1. If $I_0=\{1, \ldots, k\}$ (i.e. $x=0$), then $\Gamma_0(p)$ is empty: in fact from~\eqref{sol} we obtain that also $z=0$, contradicting the assumption $p \neq p_0$.

2. If $I_0\subsetneq \{1, \ldots,k\}$ (at least one $x_j\neq 0$), then property 4 of Prop. \ref{g} implies:
\begin{equation}
|z|=\left|\sum_{j\notin I_0}\al_jg(\la \al_j)\|x_j\|^2\right|>\frac{|\la|}{8} \sum_{j\notin I_0}\al_j^2\|x_j\|^2-\frac{\pi}{8}\sum_{j\notin I_0}\al_j\|x_j\|^2,
\end{equation}
or, equivalently:
\begin{equation}\label{modla} 
|\la|< \frac{8 |z|}{\sum_{j\notin I_0}\al_j^2\|x_j\|^2}+\frac{\pi\sum_{j\notin I_0}\al_j\|x_j\|^2}{\sum_{j\notin I_0}\al_j^2\|x_j\|^2} \leq  \frac{8 |z|}{\al_1^2\|x\|^2}+\frac{\pi \al_k}{\al_1^{2}}=:\rho,
\end{equation}
where in the last inequality we have used the fact that $\|x\|^2=\sum_{j\notin I_0}\|x_j\|^2$. The number of solutions of \eqref{sol} is the number of intersections of the horizontal line $w=z$ with the graph of:
\begin{equation}\label{eq:G0}
G_0(\la)=\sum_{j\notin I_0}\al_jg(\la \al_j)\|x_j\|^2,
\end{equation}
in the $(\lambda, w)$-plane, with the restriction $|\la|<\rho$ we found in \eqref{modla}. The function $G_0$ is itself strictly convex, and the number of points of intersections of $w=z$ with its graph is:
\begin{equation}
b\left(\Gamma_0(p)\right)\leq 2\#\{\textrm{poles of $G_0$ on the interval $(0, \rho)$}\}+1.
\end{equation}
Since the function $G_0$ has poles exactly on the sets $\Lambda_j=\{\la\neq 0\,|\, \lambda \al_j\in 2\pi \mathbb{Z},\, j\notin I_0\}$, we obtain:
\begin{equation}\label{eq:bound0}
\begin{aligned}
 b(\Gamma_0(p)) \leq 2\sum_{j\notin I_0}\left\lfloor\frac{\rho \al_j}{2\pi}\right\rfloor+1 &
\leq  2\sum_{j\notin I_0}\left\lfloor\frac{4\al_j|z|}{\pi\al_1^2\|x\|^2}+\frac{\al_k \al_j}{2 \al_1^2}\right\rfloor+1 \leq \\
& \leq (k -\# I_0)\frac{8}{\pi}\frac{\alpha_k}{\alpha_1^2} \frac{|z|\phantom{^2}}{\|x\|^2} + r_0,
\end{aligned}
\end{equation}
where $r_0$ is a bounded remainder (homogeneous of degree $0$ in the singular values) given by:
\begin{equation}
 r_0 = (k-\# I_0)\frac{\alpha_k^2}{\alpha_1^2} +1.
\end{equation}
Let us consider now $b(\Gamma_\infty(p))$. By Thm.~\ref{thm:disjoint}, $\Gamma_\infty(p)$ is a disjoint union of spheres, one sphere for each point $\lambda \in \Lambda_p$, where:
\begin{equation}
\Lambda_p=\{\la\in \Lambda  \mid (z - G_0(\lambda))\lambda > 0 \}.
\end{equation}
Since the total Betti number of sphere is $2$ (independently on the dimension), we have:
\begin{equation}\label{eq:factor2}
b(\Gamma_\infty(p)) = b\left(\bigcup_{\lambda \in \Lambda_p} S^{2N(\lambda)-1} \right) = 2 \# \Lambda_p .
\end{equation}
We assume $z \geq 0$ for simplicity. This implies $\lambda >0$. Moreover, if $\lambda \in \Lambda_p \subseteq \Lambda$, then $\lambda$ must belong to the complement of the set of poles of the function $G_0$, namely
\begin{equation}
\lambda \in \Lambda_0:=\bigcup_{j \in I_0} \Lambda_j  = \bigcup_{j \in I_0} \frac{2\pi}{\alpha_j} \Z \setminus\{0\} \subseteq \Lambda. 
\end{equation}
Thus we finally rewrite:
\begin{equation}\label{eq:lap2}
\Lambda_p =\{\la \in \Lambda_0 \mid \lambda >0, \quad z > G_0(\lambda)\}.
\end{equation}
It only remains to estimate the cardinality of $\Lambda_p$. We distinguish again two cases.

1. $I_0=\{1, \ldots, k\}$ (i.e. $x=0$). By our assumption $p \neq p_0$ it follows that $z>0$. Moreover, in this case $G_0(\lambda) \equiv 0$ and $\Lambda_0 = \Lambda$. Therefore $\Lambda_p = \Lambda$ is infinite and $\Gamma_\infty(p)$ consists of infinitely many spheres, thus $b(\Gamma_\infty(p))=\infty$.

2. $I_0 \subsetneq\{1, \ldots, k\}$. In this case we have to count the $\bar{\la}>0$, such that:
\begin{equation}\label{abc}
z>\sum_{j\notin I_0}\al_jg(\bar{\la} \al_j)\|x_j\|^2, \qquad \text{with } \bar\la \in \Lambda_0.
\end{equation}
Arguing exactly as in~\eqref{modla} we obtain that:
\begin{equation}
|\bar\lambda| <  \frac{8 |z|}{\al_1^2\|x\|^2}+  \frac{\pi \al_k}{\al_1^2} := \rho.
\end{equation}
Thus the number of $\bar\la$ satisfying \eqref{abc} is bounded by the (finite) number of elements $\bar\la \in \Lambda_0$ in the interval $(0,\rho)$ (arguing as in~\eqref{eq:bound0}):
\begin{equation}\label{empty} 
\sum_{j\in I_0}\left\lfloor\frac{\rho \al_j}{2\pi}\right\rfloor\leq  \#I_0 \frac{4}{\pi} \frac{\al_k}{\al_1^2} \frac{|z|\phantom{^2}}{\|x\|^2}+\frac{\#I_0}{2}\frac{\al_k^2}{\al_1^2} ,
\end{equation}
Combining this with \eqref{eq:factor2} we get:
\begin{equation}
b\left(\Gamma_\infty(p)\right)\leq  \#I_0 \frac{8}{\pi} \frac{\al_k}{\al_1^2}\frac{|z|\phantom{^2}}{\|x\|^2}+r_\infty,
\end{equation}
where $r_\infty$ is a bounded remainder (homogeneous of degree $0$ in the singular values) given by:
\begin{equation}
r_\infty = \#I_0 \frac{\alpha_k^2}{\alpha_1^2}.
\end{equation}

Finally, since the union $\Gamma_0(p)\cup \Gamma_\infty(p)$ is disjoint and closed, we obtain:
\begin{equation}\label{eq:sumupper}
\begin{aligned}
b\left(\Gamma(p)\right)& =b\left(\Gamma_0(p)\right)+ b\left(\Gamma_\infty(p)\right) \leq \\ 
&\leq (k-\#I_0) \frac{8}{\pi} \frac{\al_k}{\al_1^2} \frac{|z|\phantom{^2}}{\|x\|^2}+ \#I_0 \frac{8}{\pi} \frac{\al_k}{\al_1^2}\frac{|z|\phantom{^2}}{\|x\|^2}+r_0 + r_\infty = \\
&= \left(k \frac{8}{\pi} \frac{\al_k}{\al_1^2}\right)\frac{|z|\phantom{^2}}{\|x\|^2} + R_2,
\end{aligned}
\end{equation}
where $R_2$ is a bounded remainder (homogeneous of degree $0$ in the singular values) given by:
\begin{equation}
 R_2 = r_0+r_\infty = k \frac{\alpha_k^2}{\alpha_1^2}.
\end{equation}
Notice that if all $x_j\neq0$, then $I_0 = \emptyset$ and $\Gamma(p) = \Gamma_0(p)$, which is finite.
\end{proof}
\end{theorem}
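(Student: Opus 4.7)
The plan is to exploit the decomposition $\Gamma(p)=\Gamma_0(p)\sqcup\Gamma_\infty(p)$ provided by Theorem \ref{thm:disjoint}. Since the two pieces are disjoint and closed, the total Betti number splits:
\begin{equation}
\hat\beta(p)=b(\Gamma_0(p))+b(\Gamma_\infty(p)),
\end{equation}
so it suffices to bound the two terms individually by multiples of $|z|/\|x\|^2$ with a bounded homogeneous-of-degree-$0$ remainder, and then add them. The key auxiliary tool is the growth/convexity of $g$ recorded in Proposition \ref{g}: in particular the inequality $g(|\lambda|)>|\lambda|/8-\pi/8$ will provide the a priori bound on the admissible $\lambda$'s.

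For $\Gamma_0(p)$, I would first note that, by the condition $\sum_{j\in I_0}\|u_j\|^2=0$ in its definition, the projection onto the $\lambda$-coordinate is injective (for $j\notin I_0$ the matrix $I(\lambda\alpha_j)$ is invertible and determines $u_j$; for $j\in I_0$ we force $u_j=0$). Hence $b(\Gamma_0(p))=\#\Gamma_0(p)$ equals the number of solutions of $z=G_0(\lambda):=\sum_{j\notin I_0}\alpha_j g(\lambda\alpha_j)\|x_j\|^2$. Since $g$ is strictly convex on each interval between consecutive poles (Proposition \ref{g}), so is $G_0$, and therefore the horizontal line $w=z$ meets the graph of $G_0$ in at most $2$ points per interval between consecutive poles of $G_0$; thus $\#\Gamma_0(p)$ is bounded by $2(\#\text{poles of }G_0\text{ in }(0,\rho))+1$. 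The radius $\rho$ is obtained from Proposition \ref{g}(4), which yields $|\lambda|<\rho\le 8|z|/(\alpha_1^2\|x\|^2)+\pi\alpha_k/\alpha_1^2$, and the number of poles in $(0,\rho)$ is at most $\sum_{j\notin I_0}\lfloor \rho\alpha_j/(2\pi)\rfloor$.

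For $\Gamma_\infty(p)$ I would use that Theorem \ref{thm:disjoint} identifies it with a disjoint union of spheres indexed by $\Lambda_p$. Since every sphere has total Betti number $2$, one has $b(\Gamma_\infty(p))=2\#\Lambda_p$. The definition of $\Lambda_p$ and the relation $L(\lambda)\subseteq I_0$ force $\Lambda_p\subseteq\bigcup_{j\in I_0}\Lambda_j$, and the inequality $(z-G_0(\lambda))\lambda>0$ again restricts $|\lambda|<\rho$ with the same $\rho$ as above (by Proposition \ref{g}(4) applied to $G_0$). Counting lattice points gives $\#\Lambda_p\le\sum_{j\in I_0}\lfloor\rho\alpha_j/(2\pi)\rfloor$.

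Summing the two estimates, the sums $\sum_{j\notin I_0}$ and $\sum_{j\in I_0}$ merge to $\sum_{j=1}^k$, and $\alpha_j\le\alpha_k$ yields the leading coefficient $8k\alpha_k/(\pi\alpha_1^2)$, while the floor and boundary terms assemble into a bounded $R_2$, homogeneous of degree $0$ because $\rho\alpha_j$ is, and the ratios $\alpha_k/\alpha_1$ and $\alpha_k^2/\alpha_1^2$ are. The degenerate cases must be handled separately: if $x=0$ and $z\neq 0$ then $G_0\equiv 0$, so $\Gamma_0(p)=\emptyset$ but $\Lambda_p=\Lambda$ is infinite, consistent with the convention $|z|/0=\infty$; if all $x_j\neq 0$ then $I_0=\emptyset$, $\Gamma_\infty(p)=\emptyset$, and the statement for $\hat\nu(p)$ follows. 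I expect the main technical nuisance to be keeping the degree-$0$ remainder genuinely bounded uniformly across the case split (in particular the floor terms and the $+1$ coming from strict convexity), rather than any conceptually hard step.
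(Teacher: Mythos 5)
Your proposal is correct and follows essentially the same route as the paper's proof: the splitting $b(\Gamma(p))=b(\Gamma_0(p))+b(\Gamma_\infty(p))$, the count of solutions of $z=G_0(\lambda)$ via strict convexity and the a priori bound $|\lambda|<\rho$ from Proposition~\ref{g}(4), the identification $b(\Gamma_\infty(p))=2\#\Lambda_p$ with $\Lambda_p\subseteq\bigcup_{j\in I_0}\Lambda_j$, and the final lattice-point count merging the two sums into the coefficient $\tfrac{8k}{\pi}\tfrac{\alpha_k}{\alpha_1^2}$. The degenerate cases ($x=0$ and $I_0=\emptyset$) are handled exactly as in the paper, so no gap remains.
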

\begin{remark}
Eq. \eqref{eq:sumupper} splits clearly the contribution to the topology into two pieces:
\begin{equation}
b\left(\Gamma_0(p)\right)\leq(k-\#I_0) \frac{8}{\pi} \frac{\al_k}{\al_1^2}\frac{|z|\phantom{^2}}{\|x\|^2}+r_0\quad \textrm{and}\quad b\left(\Gamma_\infty(p)\right)\leq\#I_0 \frac{8}{\pi} \frac{\al_k}{\al_1^2}\frac{|z|\phantom{^2}}{\|x\|^2}+ r_\infty,
\end{equation}
where we interpret the r.h.s. with the convention of Remark~\ref{r:convention}.
\end{remark}

\begin{example}[Heisenberg, conclusion]
The Jacobian of the exponential map in $\mathbb{H}_3$ can be computed explicitly using \eqref{exp} (for the general contact case, see \cite[Lemma 38]{ABBHausdorff}):
\begin{equation}
\det \left(d_{(u,\la)}\hat{E}\right)=-\frac{\|u\|^2(\la \sin \la +2 \cos \la -2)}{\la^4}.
\end{equation}
Setting to zero the previous equation we find critical points of $\hat{E}$:
\begin{equation}
\text{crit}(\hat{E})=\underbrace{\left\{\|u\|^2=0\right\}}_{A}\cup \underbrace{\left\{\la=2k\pi, k\neq 0\right\}}_{R}\cup\underbrace{\left\{\la \mid  \tfrac{\la}{2}=\tan \tfrac{\la}{2}, \, \lambda \neq 0\right\}}_{B}.
\end{equation}

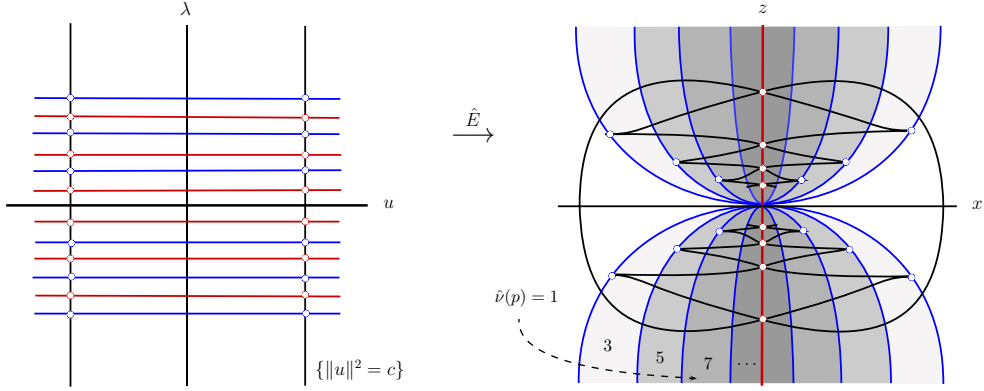
\begin{figure}[t]
\centering
\scalebox{0.6} 
{
\begin{pspicture}(0,-4.3122654)(21.91586,4.332266)
\definecolor{color22253}{rgb}{0.8,0.0,0.0}
\definecolor{color5388b}{rgb}{0.9647058823529412,0.9568627450980393,0.9568627450980393}
\definecolor{color5075b}{rgb}{0.8,0.8,0.8}
\definecolor{color4546b}{rgb}{0.7098039215686275,0.7058823529411765,0.7058823529411765}
\definecolor{color4225}{rgb}{0.2,0.2,1.0}
\definecolor{color4225b}{rgb}{0.6,0.6,0.6}
\definecolor{color59453b}{rgb}{1.0,0.0,0.2}
\psbezier[linewidth=0.04,linecolor=blue,fillstyle=solid,fillcolor=color5388b](12.62,3.6872)(12.56,-1.4493213)(20.72,-1.6722656)(20.7,3.7077343)
\psbezier[linewidth=0.04,linecolor=blue,fillstyle=solid,fillcolor=color5075b](13.92,3.6877344)(13.82,-1.472966)(19.54,-1.6322656)(19.54,3.6877344)
\psbezier[linewidth=0.04,linecolor=blue,fillstyle=solid,fillcolor=color4546b](14.9,3.6877344)(14.94,-1.5011253)(18.4,-1.5722656)(18.44,3.6877344)
\psbezier[linewidth=0.04,linecolor=color4225,fillstyle=solid,fillcolor=color4225b](16.04,3.6877344)(16.14,-1.5829973)(17.36,-1.4922656)(17.46,3.6877344)
\psbezier[linewidth=0.04,linecolor=blue,fillstyle=solid,fillcolor=color5388b](12.68,-4.2117314)(12.62,0.92478997)(20.78,1.1477344)(20.76,-4.2322655)
\psbezier[linewidth=0.04,linecolor=blue,fillstyle=solid,fillcolor=color5075b](13.98,-4.2122655)(13.88,0.94843477)(19.6,1.1077343)(19.6,-4.2122655)
\psbezier[linewidth=0.04,linecolor=blue,fillstyle=solid,fillcolor=color4546b](14.96,-4.2122655)(15.0,0.97659415)(18.46,1.0477344)(18.5,-4.2122655)
\psbezier[linewidth=0.04,linecolor=blue,fillstyle=solid,fillcolor=color4225b](16.04,-4.2122655)(16.1,1.0584661)(17.36,0.94773436)(17.44,-4.2122655)
\psline[linewidth=0.055999998cm,linecolor=color22253](16.76,3.7277343)(16.74,-4.2722654)
\usefont{T1}{ptm}{m}{n}
\rput(21.515274,-0.26726562){\Large $x$}
\usefont{T1}{ptm}{m}{n}
\rput(16.765274,4.0927343){\Large $z$}
\rput(4.0,-0.2722656){\psaxes[linewidth=0.04,labels=none,ticks=none,ticksize=0.10583333cm,showorigin=false](0,0)(-4,-4)(4,4)}
\psline[linewidth=0.04cm,linecolor=color22253](0.6,0.047734376)(7.42,0.067734376)
\psline[linewidth=0.04cm,linecolor=blue](0.6,0.48773438)(7.42,0.50773436)
\psline[linewidth=0.04cm,linecolor=color22253](0.62,0.8477344)(7.42,0.8477344)
\psline[linewidth=0.04cm,linecolor=blue](0.58,1.3277344)(7.38,1.3077344)
\psline[linewidth=0.04cm,linecolor=color22253](0.62,1.7077343)(7.38,1.6677344)
\psline[linewidth=0.04cm,linecolor=blue](0.62,2.1077344)(7.4,2.0877345)
\psline[linewidth=0.04cm,linecolor=color22253](0.62,-0.6322656)(7.38,-0.6322656)
\psline[linewidth=0.04cm,linecolor=blue](0.64,-1.0922656)(7.4,-1.0922656)
\psline[linewidth=0.04cm,linecolor=color22253](0.62,-1.4522656)(7.38,-1.4522656)
\psline[linewidth=0.04cm,linecolor=blue](0.58,-1.8722656)(7.38,-1.8722656)
\psline[linewidth=0.04cm,linecolor=color22253](0.64,-2.2722657)(7.4,-2.2922657)
\psline[linewidth=0.04cm,linecolor=blue](0.62,-2.6722655)(7.4,-2.6722655)
\usefont{T1}{ptm}{m}{n}
\rput(3.9652734,4.0927343){\Large $\lambda$}
\usefont{T1}{ptm}{m}{n}
\rput(8.475273,-0.24726562){\Large $u$}
\usefont{T1}{ptm}{m}{n}
\rput(10.338941,1.5377344){\huge $\stackrel{\hat{E}}{\longrightarrow}$}
\psbezier[linewidth=0.04](20.76,-0.29226562)(20.72,2.0277343)(19.46,2.9677343)(16.74,2.2477343)(14.02,1.5277344)(14.14,1.3277344)(13.42,1.3077344)(12.7,1.2877344)(15.84,1.3877344)(16.74,1.0677344)(17.64,0.74773437)(19.014715,0.68741465)(18.42,0.6677344)(17.825285,0.64805406)(16.98,0.62773436)(16.72,0.5477344)(16.46,0.46773437)(16.371223,0.26152986)(15.86,0.2877344)(15.348777,0.3139389)(17.02,0.18773438)(17.06,0.12773438)
\psline[linewidth=0.04cm](6.62,-4.2722654)(6.62,3.7077343)
\psline[linewidth=0.04cm](1.42,-4.2922654)(1.42,3.7677343)
\psdots[dotsize=0.16,linecolor=blue,fillstyle=solid,dotstyle=o](6.62,2.1077344)
\psdots[dotsize=0.16,linecolor=blue,fillstyle=solid,dotstyle=o](1.42,2.1077344)
\psdots[dotsize=0.16,linecolor=blue,fillstyle=solid,dotstyle=o](1.42,1.3477343)
\psdots[dotsize=0.16,linecolor=blue,fillstyle=solid,dotstyle=o](6.62,1.3077344)
\psdots[dotsize=0.16,linecolor=blue,fillstyle=solid,dotstyle=o](6.62,0.50773436)
\psdots[dotsize=0.16,linecolor=blue,fillstyle=solid,dotstyle=o](6.62,-1.0922656)
\psdots[dotsize=0.16,linecolor=blue,fillstyle=solid,dotstyle=o](6.62,-1.8922657)
\psdots[dotsize=0.16,linecolor=blue,fillstyle=solid,dotstyle=o](1.44,-1.1122656)
\psdots[dotsize=0.16,linecolor=blue,fillstyle=solid,dotstyle=o](6.64,-2.6722655)
\psdots[dotsize=0.16,linecolor=blue,fillstyle=solid,dotstyle=o](1.42,0.48773438)
\psdots[dotsize=0.16,linecolor=blue,fillstyle=solid,dotstyle=o](1.44,-1.8522656)
\psdots[dotsize=0.16,linecolor=blue,fillstyle=solid,dotstyle=o](1.42,-2.6922655)
\psdots[dotsize=0.16,linecolor=color22253,fillstyle=solid,dotstyle=o](6.62,1.6677344)
\psdots[dotsize=0.16,linecolor=color22253,fillstyle=solid,dotstyle=o](6.62,0.8477344)
\psdots[dotsize=0.16,linecolor=color22253,fillstyle=solid,dotstyle=o](6.62,0.067734376)
\psdots[dotsize=0.16,linecolor=color22253,fillstyle=solid,dotstyle=o](6.62,-0.6322656)
\psdots[dotsize=0.16,linecolor=color22253,fillstyle=solid,dotstyle=o](6.62,-1.4322656)
\psdots[dotsize=0.16,linecolor=color22253,fillstyle=solid,dotstyle=o](6.62,-2.2522657)
\psdots[dotsize=0.16,linecolor=color22253,fillstyle=solid,dotstyle=o](1.42,-2.2722657)
\psdots[dotsize=0.16,linecolor=color22253,fillstyle=solid,dotstyle=o](1.42,-1.4522656)
\psdots[dotsize=0.16,linecolor=color22253,fillstyle=solid,dotstyle=o](1.42,-0.6522656)
\psdots[dotsize=0.16,linecolor=color22253,fillstyle=solid,dotstyle=o](1.42,0.067734376)
\psdots[dotsize=0.16,linecolor=color22253,fillstyle=solid,dotstyle=o](1.42,0.8677344)
\psdots[dotsize=0.16,linecolor=color22253,fillstyle=solid,dotstyle=o](1.42,1.6877344)
\psdots[dotsize=0.16,linecolor=blue,fillstyle=solid,dotstyle=o](13.38,1.3077344)
\psdots[dotsize=0.16,linecolor=blue,fillstyle=solid,dotstyle=o](18.6,0.68773437)
\psdots[dotsize=0.16,linecolor=blue,fillstyle=solid,dotstyle=o](15.78,0.30773437)
\psdots[dotsize=0.16,linecolor=blue,fillstyle=solid,dotstyle=o](18.68,-1.2522656)
\psline[linewidth=0.04cm,fillcolor=color59453b](12.22,-0.29226562)(21.06,-0.29226562)
\usefont{T1}{ptm}{m}{n}
\rput(7.835273,-3.9272656){\Large $\{\|u\|^2=c\}$}
\psbezier[linewidth=0.04](20.76,-0.29226562)(20.72,-2.7722657)(19.48,-3.5322657)(16.76,-2.8122656)(14.04,-2.0922656)(14.18,-1.8722656)(13.48,-1.8122656)(12.78,-1.7522656)(15.86,-1.9522656)(16.76,-1.6322656)(17.66,-1.3122656)(19.034716,-1.2519459)(18.44,-1.2322656)(17.845285,-1.2125853)(17.0,-1.1922656)(16.74,-1.1122656)(16.48,-1.0322657)(16.391222,-0.8260611)(15.88,-0.8522656)(15.368777,-0.8784701)(17.04,-0.75226563)(17.08,-0.6922656)
\psbezier[linewidth=0.04](12.7002,-0.2722656)(12.76,-2.6922655)(14.003459,-3.5322657)(16.730276,-2.8122656)(19.457092,-2.0922656)(19.236391,-1.8922657)(19.98,-1.8522656)(20.72361,-1.8122656)(17.632532,-1.9522656)(16.730276,-1.6322656)(15.82802,-1.3122656)(14.44986,-1.2519459)(15.046065,-1.2322656)(15.642271,-1.2125853)(16.489674,-1.1922656)(16.750326,-1.1122656)(17.010977,-1.0322657)(17.099977,-0.8260611)(17.612482,-0.8522656)(18.124985,-0.8784701)(16.449574,-0.75226563)(16.409473,-0.6922656)
\psbezier[linewidth=0.04](12.7,-0.29226562)(12.7,2.1477344)(14.0,2.9677343)(16.72,2.2477343)(19.44,1.5277344)(19.22,1.4077344)(19.96,1.3877344)(20.06,1.3677344)(17.62,1.3877344)(16.72,1.0677344)(15.82,0.74773437)(14.445285,0.68741465)(15.04,0.6677344)(15.634715,0.64805406)(16.48,0.62773436)(16.74,0.5477344)(17.0,0.46773437)(17.088778,0.26152986)(17.6,0.2877344)(18.111223,0.3139389)(16.44,0.18773438)(16.4,0.12773438)
\psdots[dotsize=0.16,linecolor=color22253,fillstyle=solid,dotstyle=o](16.76,1.0677344)
\psdots[dotsize=0.16,linecolor=color22253,fillstyle=solid,dotstyle=o](16.76,0.5477344)
\psdots[dotsize=0.16,linecolor=color22253,fillstyle=solid,dotstyle=o](16.76,0.16773437)
\psdots[dotsize=0.16,linecolor=blue,fillstyle=solid,dotstyle=o](20.06,1.3877344)
\psdots[dotsize=0.16,linecolor=blue,fillstyle=solid,dotstyle=o](17.62,0.2877344)
\psdots[dotsize=0.16,linecolor=blue,fillstyle=solid,dotstyle=o](14.84,0.68773437)
\psdots[dotsize=0.16,linecolor=color22253,fillstyle=solid,dotstyle=o](16.76,2.2477343)
\psdots[dotsize=0.16,linecolor=blue,fillstyle=solid,dotstyle=o](13.42,-1.8322656)
\psdots[dotsize=0.16,linecolor=blue,fillstyle=solid,dotstyle=o](14.86,-1.2322656)
\psdots[dotsize=0.16,linecolor=blue,fillstyle=solid,dotstyle=o](15.8,-0.8522656)
\psdots[dotsize=0.16,linecolor=color22253,fillstyle=solid,dotstyle=o](16.76,-1.6322656)
\psdots[dotsize=0.16,linecolor=color22253,fillstyle=solid,dotstyle=o](16.76,-1.1122656)
\psdots[dotsize=0.16,linecolor=color22253,fillstyle=solid,dotstyle=o](16.76,-0.75226563)
\psdots[dotsize=0.16,linecolor=blue,fillstyle=solid,dotstyle=o](17.66,-0.8322656)
\psdots[dotsize=0.16,linecolor=blue,fillstyle=solid,dotstyle=o](20.06,-1.8722656)
\psdots[dotsize=0.16,linecolor=color22253,fillstyle=solid,dotstyle=o](16.76,-2.7922657)
\usefont{T1}{ptm}{m}{n}
\rput(13.327334,-3.3822656){\large $3$}
\usefont{T1}{ptm}{m}{n}
\rput(14.487334,-3.6422656){\large $5$}
\usefont{T1}{ptm}{m}{n}
\rput(15.547334,-3.7822657){\large $7$}
\usefont{T1}{ptm}{m}{n}
\rput(16.407333,-3.7822657){\large $\ldots$}
\usefont{T1}{ptm}{m}{n}
\rput(11.527334,-2.3222656){\large $\hat{\nu}(p)=1$}
\psbezier[linewidth=0.03,linestyle=dashed,dash=0.16cm 0.16cm,fillcolor=color59453b,arrowsize=0.05291667cm 2.0,arrowlength=1.4,arrowinset=0.4]{->}(11.36,-2.7922657)(11.26,-3.8922656)(14.14,-4.0338283)(15.3,-4.1122656)
\end{pspicture} 
}
\caption{Qualitative picture of the exponential map for $\mathbb{H}_3$. The critical points are the $\la$-axes $A$, the set $R$ (in red) and the set $B$ (in blue). The broken curve is the section in the $(x,z)$-plane of the image of the cylinder $\{\|u\|^2=c\}$. The number of geodesics to $p$ is constant on each shaded region (the white one is where $\hat{\nu}(p)=1$).
When $c$ varies the blue dots on the right figure (the images of $B\cap \{\|u\|^2=c\}$) ``span'' all the paraboloids $|z|=g(\la_k)\|x\|^2$.} \label{Heisfinal}
\end{figure}

The critical values are the images of these sets. For convenience of notations we label $\la_k$, with $k \in \mathbb{Z} \setminus \{0\}$, the non-zero solutions of $ \tfrac{\la}{2}=\tan \tfrac{\la}{2}$: these numbers, in the case of the Heisenberg group, coincide with the minima of the function $g$ and are of the form $\la_k=(2k+1)\pi+\epsilon_k$. The critical values of $\hat{E}$ decompose into the union of the three sets:
\begin{equation}
\hat{E}(A)=\text{origin},\quad \hat{E}(R)=\text{$z$-axis},\quad \hat{E}(B)=\left \{z=\|x\|^2g(\la_k) \mid k\in \mathbb{Z} \setminus \{0\}\right\}.
\end{equation}
In particular, $\hat{E}(B)$ is a union of paraboloids, and has the following characterization: for $x \neq 0$, we have:
\begin{equation}
\hat\nu(p) = \#\{\lambda \mid z = g(\lambda) \|x\|^2\}.
\end{equation} 
By the properties of $g$, and assuming $z\geq 0$ (resp. $z\leq 0$), two new contributions to $\hat\nu(p)$ appear (or disappear) every time the ratio $|z|/\|x\|^2$ crosses the values $g(\lambda_k)$, for $k \geq 0$ (resp. $k \leq 0$). Thus the function $\hat{\nu}(p)$ ``jumps'' by two every time $p$ crosses $\hat{E}(B)$ transversely (see Fig. \ref{Heisfinal}).

\end{example}

\section{Lower bounds}\label{s:lower}

According to the decomposition of Sec.~\ref{s:main}, for $p\neq p_0$ we have the following splitting:
\begin{equation}
\Gamma(p) = \Gamma_0(p) \cup \Gamma_\infty(p),
\end{equation}
where $\Gamma_0(p)$ is a finite set and $\Gamma_\infty(p)$ is homeomorphic to a disjoint union of spheres. According to Remark~\ref{r:emptyvanish}, if $p=(x,z)$ is a point with all components $x_j \neq 0$, then $\Gamma_\infty(p) = \emptyset$ (in particular this is the case for a generic point $p$). In this setting we prove the next theorem.

\begin{theorem}[The ``infinitesimal'' lower bound]\label{thm:lower}
Given a contact Carnot group $G$, there exist constants $C_1,R_1$ such that if $p=(x,z)\in G$ has all components $x_j$ different from zero, then:
\begin{equation}
C_1\frac{|z|\phantom{^2}}{\|x\|^2}+R_1\leq \hat{\nu}(p).
\end{equation}
In particular, denoting by $\al_1$ and $\al_k$ the smallest and the largest singular values of $A$:
\begin{equation}\label{numerical}
C_1=\frac{8}{\pi} \frac{\alpha_1}{\alpha_k^2}\sin\left(\frac{\delta \pi}{2}\right)^2\quad \text{with} \quad \delta = \left(\sum_{j=1}^k \frac{\alpha_1}{\alpha_j} \left\lfloor \frac{\alpha_j}{\alpha_1}\right\rfloor\right)^{-1}.
\end{equation}
Moreover, $R_1$ (resp. $C_1$) is homogeneous of degree $0$ (resp. $-1$) in the singular values $\alpha_1,\ldots,\alpha_k$.
\end{theorem}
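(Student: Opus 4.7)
The plan is to reduce the count of geodesics to the count of roots of a one-variable equation, then exploit strict convexity together with a Diophantine/ergodic argument to extract the stated density.

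\textbf{Setup.} Since all $x_j \neq 0$, we have $I_0=\emptyset$ in the notation of Proposition~\ref{t:fiber}, so $\Gamma(p)=\Gamma_0(p)$ and the geodesics ending at $p=(x,z)$ are in bijection with the roots $\lambda\in\mathbb{R}\setminus\Lambda$ of
\begin{equation}
G_0(\lambda)=z, \qquad G_0(\lambda)=\sum_{j=1}^k \alpha_j\, g(\lambda\alpha_j)\,\|x_j\|^2.
\end{equation}
By Proposition~\ref{g}, on every maximal open sub-interval of $\mathbb{R}_{>0}\setminus\Lambda$ of the form $(\lambda_a,\lambda_b)$ the function $G_0$ is strictly convex and tends to $+\infty$ at both endpoints (the case $z<0$ is symmetric via $g(-\lambda)=-g(\lambda)$). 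So such an interval contributes exactly two solutions whenever $\min_{(\lambda_a,\lambda_b)}G_0\leq z$, and none otherwise. It therefore suffices to lower bound, for $z>0$, the number of inter-pole intervals in $(0,+\infty)$ on which $G_0$ dips below $z$.

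\textbf{Good test points.} Call $\bar\lambda>0$ \emph{good} if $\mathrm{dist}(\bar\lambda\alpha_j,2\pi\mathbb{Z})\geq \pi\delta$ for every $j=1,\ldots,k$. At such a point one has $|\sin(\bar\lambda\alpha_j/2)|\geq \sin(\pi\delta/2)$, and the elementary estimate $|\lambda\alpha_j-\sin(\lambda\alpha_j)|\leq \lambda\alpha_j+1$ gives
\begin{equation}
G_0(\bar\lambda)\leq \frac{\bar\lambda\sum_j \alpha_j^2\|x_j\|^2}{8\sin^2(\pi\delta/2)}+O(\|x\|^2)\leq \frac{\bar\lambda\,\alpha_k^2\,\|x\|^2}{8\sin^2(\pi\delta/2)}+O(\|x\|^2).
\end{equation}
Setting $\Lambda_{\max}:=\frac{8\sin^2(\pi\delta/2)}{\alpha_k^2}\frac{|z|}{\|x\|^2}+O(1)$, every good $\bar\lambda\in(0,\Lambda_{\max})$ satisfies $G_0(\bar\lambda)\leq z$, so the unique inter-pole interval of $\Lambda$ containing $\bar\lambda$ contributes two solutions; distinct good points lying in distinct intervals give disjoint pairs. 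The final step is to show that good points have density at least $\alpha_1/(2\pi)$, so that their count in $(0,\Lambda_{\max})$ is at least $\Lambda_{\max}\,\alpha_1/(2\pi)+O(1)$; combining with the factor $2$ yields exactly the claimed bound with the asserted constant $C_1$.

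\textbf{Main obstacle.} The heart of the argument is producing at least one good $\bar\lambda$ in every period $P_m=(2\pi m/\alpha_1,\,2\pi(m+1)/\alpha_1)$. A naive union bound on the bad neighborhoods $(2\pi r/\alpha_j-\pi\delta/\alpha_j,\,2\pi r/\alpha_j+\pi\delta/\alpha_j)$ yields a total bad measure in $P_m$ equal to
\begin{equation}
\sum_{j=1}^k \lfloor\alpha_j/\alpha_1\rfloor\cdot \frac{2\pi\delta}{\alpha_j}=\frac{2\pi\delta}{\alpha_1}\sum_{j=1}^k \frac{\alpha_1}{\alpha_j}\lfloor\alpha_j/\alpha_1\rfloor=\frac{2\pi}{\alpha_1},
\end{equation}
by the very definition of $\delta$, i.e.\ precisely the length of $P_m$. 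Thus subadditivity is saturated and cannot by itself produce a good point; this is why $\delta$ appears as stated, as the critical threshold. The existence of a good point must be extracted from the arithmetic structure of $\Lambda$: in the rationally commensurable case one exploits forced overlaps of bad neighborhoods within one fundamental domain of the lattice generated by the $\alpha_j$'s; in the rationally independent case one invokes Weyl equidistribution of the orbit $\bar\lambda\mapsto(\bar\lambda\alpha_1,\ldots,\bar\lambda\alpha_k)/(2\pi)$ in $\mathbb{T}^k$, which guarantees that the good set has strictly positive density in every sufficiently long window. This is the ``ergodicity'' argument alluded to in the introduction and is the only non-routine ingredient of the proof; the homogeneity properties of $C_1$ and $R_1$ in $\alpha_1,\ldots,\alpha_k$ then follow directly from their explicit form.
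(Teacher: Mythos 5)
Your reduction to counting roots of $G_0(\lambda)=z$, the two-roots-per-inter-pole-interval convexity argument, and the idea of testing $G_0$ at points staying at distance $\geq \pi\delta/\alpha_j$ from all poles are exactly the paper's strategy (Prop.~\ref{t:fiber}, Prop.~\ref{g}, Lemma~\ref{l:estimates}, Lemma~\ref{l:estimates2}). The genuine gap is the step you yourself flag as the ``main obstacle'': you never prove that every period $P_m=(2\pi m/\alpha_1,\,2\pi(m+1)/\alpha_1)$ (or all but boundedly many of them) contains a good point, and the whole count hinges on this, since the claimed constant $C_1=\frac{8}{\pi}\frac{\alpha_1}{\alpha_k^2}\sin^2(\delta\pi/2)$ requires essentially one contributing inter-pole interval per period. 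The two-case sketch you offer does not fill it. First, in general the $\alpha_j$ are neither all pairwise commensurable nor all rationally independent, so the dichotomy is incomplete. Second, Weyl equidistribution of $\lambda(\alpha_1,\ldots,\alpha_k)/2\pi$ only yields an asymptotic average density of the good set over long windows, with no rate: it gives no control on an individual window of fixed length $2\pi/\alpha_1$, and converting positive density into ``a good point in (almost) every period'' --- or even into a definite proportion of hit periods with an explicit remainder $R_1$ --- is precisely the missing argument, not a routine consequence. With a density-only input, the constant would be degraded by the unknown proportion of periods that are hit and the error would be a non-explicit $o(\cdot)$ term, which is not the statement. (Also, ``good points have density at least $\alpha_1/(2\pi)$'' conflates a count of points with a measure; what you actually need is one good point per period.)

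Moreover, the obstacle is largely self-inflicted: you froze the threshold at the critical value $\delta=\delta_M$ from the outset, which is why your union bound saturates. The paper instead fixes an arbitrary $\delta<\delta_M$ and proves the per-period statement by an integral pigeonhole on each single period (Lemma~\ref{l:claim}: if $I_{n,1}$ contained no good point, then $s_n=\int_{I_{n,1}}\#\{j \mid \lambda\in\hat I_j\}\,d\lambda \leq (k-1)a_1$, while the measure count of the intervals $\hat I_{m,j}$ gives $s_n>(k-1)a_1$, a contradiction); it then runs the counting argument to get $\hat\nu(p)\geq C(\delta)\frac{|z|}{\|x\|^2}+R(\delta)$ and only at the very end optimizes over $\delta$ up to $\delta_M$, using that $C(\delta),R(\delta)$ are non-decreasing in $\delta$. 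No arithmetic of the $\alpha_j$ (commensurability, equidistribution) enters anywhere. So either adopt such a per-period pigeonhole for $\delta<\delta_M$ and pass to the limit to recover the stated $C_1$, or supply a complete proof of your per-period claim; as written, the central lemma of the proof is missing.
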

\begin{proof}
When all the $x_j\neq0$, then $\Gamma(p) = \Gamma_0(p)$. According to Prop.~\ref{t:fiber}, and recalling that $I_0 = \emptyset$, the number of geodesics ending at $p=(x,z)$ is computed by:
\begin{equation}
\hat{\nu}(p) =  \#\{\lambda\mid z = G(\lambda)\}, \qquad G(\lambda):= \sum_{j=1}^k \alpha_j g(\alpha_j \lambda)\|x_j\|^2 .
\end{equation}
The idea of the proof is to build a sequence of values $\hat{\lambda}_n$, growing linearly with $n$, such that $G(\hat{\lambda}_n) \leq c n + d$ for some constants $c$ and $d$. By the strict convexity of $G(\lambda)$, we have at least one contribution to $\hat{\nu}(p)$ for any point $\hat{\lambda}_n$ of the sequence such that $G(\hat{\lambda}_n) < z$. 

Without loss of generality, we assume $z \geq 0$ and then $\lambda \geq 0$. For fixed $0<\delta\leq 1$ and every $j=1, \ldots, k$ define the intervals: 
\begin{equation}
I_{n,j}:=\left[\frac{2n\pi}{\al_j}, \frac{2(n+1)\pi}{\al_j}\right]\qquad \text{and}\qquad \hat{I}_{n,j}:=\left[\frac{2n\pi}{\al_j}+\frac{\delta\pi}{\alpha_j}, \frac{2(n+1)\pi}{\al_j} - \frac{\delta\pi}{\alpha_j}\right].
\end{equation}
Each interval $\hat{I}_{n,j}$ is contained in $I_{n,j}$ and the lengths of these two intervals are (see Fig. \ref{f:figureintervals}):
\begin{equation}
 |I_{n,j}|=\frac{2\pi}{\al_j}=:a_j\qquad \text{and}\qquad |\hat{I}_{n,j}|=a_j(1-\delta).
\end{equation}
The singular values $0<\alpha_1 < \cdots < \alpha_k$ are ordered, then the intervals $I_{n,1}$, for $n \in \N$ are the largest. We also define $y_n:=(2n-\delta)\pi$. Notice that $\frac{y_{n+1}}{\alpha_j}$ is the maximum of the interval $\hat{I}_{n,j}$, and will play an important role in the proof.

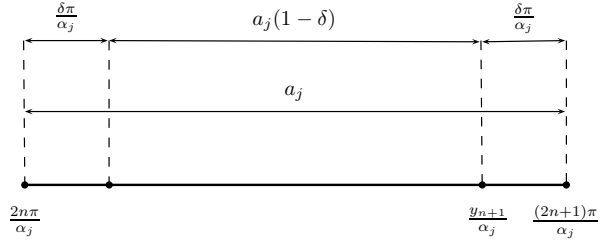
\begin{figure}
\centering
\scalebox{0.8} 
{
\begin{pspicture}(0,-1.8880469)(12.42291,1.8880469)
\psline[linewidth=0.04cm](1.5210156,-1.0903516)(10.521015,-1.0903516)
\psdots[dotsize=0.12](1.5210156,-1.0903516)
\psdots[dotsize=0.12](10.521015,-1.0903516)
\psdots[dotsize=0.12](2.9210157,-1.0903516)
\psdots[dotsize=0.12](9.121016,-1.0903516)
\usefont{T1}{ptm}{m}{n}
\rput(1.5224707,-1.7){$\frac{2n\pi}{\al_j}$}
\usefont{T1}{ptm}{m}{n}
\rput(10.51247,-1.7){$\frac{(2n+1)\pi}{\al_j}$}
\usefont{T1}{ptm}{m}{n}
\rput(9.212471,-1.7){$\frac{y_{n+1}}{\al_j}$}
\usefont{T1}{ptm}{m}{n}
\rput(2.2124708,1.6946484){$\frac{\delta\pi}{\al_j}$}
\psline[linewidth=0.02cm,linestyle=dashed,dash=0.16cm 0.16cm](1.5210156,-1.1103516)(1.5010157,1.3096484)
\psline[linewidth=0.02cm,linestyle=dashed,dash=0.16cm 0.16cm](10.521015,-1.0503516)(10.521015,1.3096484)
\psline[linewidth=0.02cm,arrowsize=0.05291667cm 2.0,arrowlength=1.4,arrowinset=0.4]{<->}(1.5210156,0.12964843)(10.501016,0.12964843)
\psline[linewidth=0.02cm,linestyle=dashed,dash=0.16cm 0.16cm](2.9210157,-1.1103516)(2.9210157,1.3096484)
\psline[linewidth=0.02cm,linestyle=dashed,dash=0.16cm 0.16cm](9.121016,-1.0703516)(9.101016,1.3096484)
\psline[linewidth=0.02cm,arrowsize=0.05291667cm 2.0,arrowlength=1.4,arrowinset=0.4]{<->}(2.9210157,1.3096484)(9.081016,1.2896484)
\psline[linewidth=0.02cm,arrowsize=0.05291667cm 2.0,arrowlength=1.4,arrowinset=0.4]{<->}(1.5210156,1.3096484)(2.9010155,1.3096484)
\psline[linewidth=0.02cm,arrowsize=0.05291667cm 2.0,arrowlength=1.4,arrowinset=0.4]{<->}(9.121016,1.2896484)(10.501016,1.3096484)
\usefont{T1}{ptm}{m}{n}
\rput(6.0,0.43464842){$a_j$}
\usefont{T1}{ptm}{m}{n}
\rput(6.0,1.6546484){$a_j(1-\delta)$}
\usefont{T1}{ptm}{m}{n}
\rput(9.81247,1.6746484){$\frac{\delta\pi}{\al_j}$}
\end{pspicture} 
}
\caption{The intervals $\hat{I}_{n,j}\subset I_{n,j}$.}\label{f:figureintervals}
\end{figure}
Each function $\lambda \mapsto g(\alpha_j\lambda)$ is unbounded in the intervals $I_{n,j}$ (it has poles at the extrema), but it is controlled on all the smaller intervals $\hat{I}_{n,j}$, as stated by the next lemma.
\begin{lemma}\label{l:estimates}
There exist constants $c_1(\delta)$, $d_1(\delta)$ such that, for $j=1,\ldots,k$:
\begin{equation}
g(\alpha_j\lambda) \leq c_1(\delta)n + d_1(\delta), \qquad \forall \lambda \in \hat{I}_{n,j}.
\end{equation}
\end{lemma}
\begin{proof}
By Prop.~\ref{g}, for all $j=1,\ldots,k$ the functions $\lambda \mapsto g(\alpha_j \lambda)$ are strictly convex on the intervals $\hat{I}_{n,j} \subset I_{n,j}$. Each function is clearly unbounded on $I_{n,j}$ but, when restricted on $\hat{I}_{n,j}$, it achieves its maximum value at the point $\tfrac{y_{n+1}}{\alpha_j}$ (i.e. the maximum of the interval $\hat{I}_{n,j}$). Therefore, by explicit evaluation, for all $\lambda \in \hat{I}_{n,j}$ we have:
\begin{equation}
g(\alpha_j\lambda) \leq g(y_{n+1}) = \frac{2\pi}{8\sin(\delta\pi/2)^2} n + \frac{2\pi - \delta \pi + \sin(\delta\pi)}{8 \sin(\delta\pi/2)^2} = c_1(\delta)n+ d_1(\delta). \qedhere
\end{equation}
\end{proof}
The next lemma implies that, for each $n \geq 0$, the large interval $I_{n,1}$ contains at least one point that belongs to all the smaller intervals $\hat{I}_{m_1,1},\ldots,\hat{I}_{m_k,k}$, for some $m_1,\ldots,m_k$.
\begin{lemma}\label{l:claim}
Let $\hat{I}_j=\bigcup_{m\geq 0}\hat{I}_{m, j}$ for all $j=1,\ldots,k$. If $0 < \delta \leq 1$ is small enough then:
\begin{equation} 
\forall n \geq 0\qquad  I_{n, 1}\cap\bigcap_{j=1}^k\hat{I}_j\neq \emptyset.
\end{equation}
\end{lemma}
\begin{proof}
We argue by contradiction. Assume there exists $n\geq 0$ such that for all $\lambda \in I_{n,1}$ we can find $j\in\{1, \ldots, k\}$ with $\lambda \notin \hat{I}_{j}$. This implies:
\begin{equation}\label{intest} 
s_n:=\int_{I_{n,1}}\#\left\{j\mid\lambda\in \hat{I}_ j\right\}dz\leq (k-1)a_1.
\end{equation}
On the other hand the above integral equals:
\begin{equation}
s_n=\sum_{j=1}^k|\hat{I}_{j}\cap I_{n,1}|\geq k a_1- \sum_{j=1}^k\frac{2\delta\pi}{\alpha_j}\left\lfloor \frac{\al_j}{\al_1}\right\rfloor\geq (k-1)a_1+\left( a_1 - \delta \sum_{j=1}^k\frac{2\pi}{\alpha_j}\left\lfloor \frac{\al_j}{\al_1}\right\rfloor\right).
\end{equation}
Recalling that $a_1 = \frac{2\pi}{\alpha_1}$, if we choose
\begin{equation}\label{eq:bestdelta}
0< \delta < \left(\sum_{j=1}^k \frac{\alpha_1}{\alpha_j} \left\lfloor \frac{\alpha_j}{\alpha_1}\right\rfloor\right)^{-1},
\end{equation}
we obtain $s_n> (k-1)a_1$, contradicting \eqref{intest}.
\end{proof}
The next lemma builds a sequence $\hat{\lambda}_n$ where the behaviour of $G(\lambda)$ is controlled.
\begin{lemma}\label{l:estimates2}
There exists an unbounded, increasing sequence $\{\hat\lambda_n\in I_n\}_{n\in \N}$ and constants $c_k(\delta)$, $d_k(\delta)$ such that:
\begin{equation}
\sum_{j=1}^kg(\hat{\lambda}_n \al_j)\leq c_k(\delta) n + d_k(\delta).
\end{equation}
\end{lemma}
\begin{proof}
By Lemma~\eqref{l:claim}, for all $n \geq 0$ there is a point $\hat{\lambda}_n\in I_{n,1} \cap \hat{I}_{m_1,1}\cap \hat{I}_{m_2, 2}\cap\cdots \cap \hat{I}_{m_k,k}$, for some $m_1,\ldots,m_k$. This sequence is unbounded and increasing. By construction, $m_1 = n$ and
\begin{equation}
m_j \leq \left\lfloor\frac{(n+1)\alpha_j}{\alpha_1}\right\rfloor \leq \frac{\alpha_k}{\alpha_1} n + 2\frac{\alpha_k}{\alpha_1}, \qquad j=2,\ldots,k.
\end{equation}
By the estimates of Lemma~\ref{l:estimates}, we have
\begin{equation}
\sum_{j=1}^kg(\hat{\lambda}_n \al_j)\leq \sum_{j=1}^k \left( c_1(\delta)m_j+ d_1(\delta)\right) \leq \underbrace{\left[c_1(\delta) k \frac{\alpha_k}{\alpha_1}\right]}_{c_k(\delta)} n  + \underbrace{\left[2c_1(\delta)(k-1) \frac{\alpha_k}{\alpha_1} + k d_1(\delta) \right]}_{d_k(\delta)}.\qedhere
\end{equation}
\end{proof}
We are now ready for the computation of the lower bound for $\hat{\nu}(p)$. Indeed
\begin{equation}\label{solutions}
\hat\nu(p)=\#\{\la \mid z=G(\la)\}, \qquad G(\la):= \sum_{j=1}^k \al_j g(\la \al_j)\|x_j\|^2.
\end{equation}
By Prop.~\ref{g}, each function $\lambda \mapsto g(\alpha_j \lambda)$ is strictly convex in the intervals $I_{n,j}$, for $n \in \N$, and has poles at the extrema of $I_{n,j}$ (excluding $\lambda = 0$), i.e. the discrete set $\Lambda_j$. Then also $G(\la)$ is a strictly convex function in each interval in which it is defined, with poles at $\Lambda = \cup_{j=1}^k \Lambda_j$. 

Consider the sequence $\hat{\lambda}_n$ of Lemma~\ref{l:estimates2}. 
There are at least $2$ solutions contributing to Eq.~\eqref{solutions} for any value $\hat{\lambda}_n$ such that $G(\hat{\lambda}_n) < z$. This follows by strict convexity of $G$ in the interval between two successive poles containing $\hat\lambda_n$. The only exception to this rule is when $\hat{\lambda}_n$ belongs to $I_{0,k}$: in this case there is only $1$ solution (there is no pole at $\lambda =0$).
We have:
\begin{equation}
G(\hat{\lambda}_n) \leq \alpha_k \|x\|^2 \sum_{j=1}^k g(\alpha_j \hat{\lambda}_n) \leq \alpha_k \|x\|^2 \left[c_k(\delta) n +d_k(\delta)\right].
\end{equation}
Thus, $\hat{\lambda}_0$ gives a contribution of $1$ to $\hat\nu(p)$, while each point $\hat{\lambda}_n$, with $n \geq 1$ of the sequence, such that $G(\hat{\lambda}_n) < z$, give a contribution of $2$ to $\hat\nu(p)$. Taking in account all the contributions:
\begin{equation}
\hat{\nu}(p) \geq 2\left\lfloor\frac{1}{c_k(\delta)} \frac{|z|\phantom{^2}}{\alpha_k\|x\|^2} -\frac{d_k(\delta)}{c_k(\delta)} \right\rfloor + 1  \geq \frac{2}{\alpha_k c_k(\delta)} \frac{|z|\phantom{^2}}{\|x\|^2} -2\frac{d_k(\delta)}{c_k(\delta)}-1.
\end{equation}
Plugging in the constants obtained above, we obtain:
\begin{equation}
\hat{\nu}(p) \geq C(\delta)\frac{|z|\phantom{^2}}{\|x\|^2} + R(\delta),
\end{equation}
with:
\begin{equation}
C(\delta):= \frac{8}{\pi} \frac{\alpha_1}{\alpha_k^2} \sin\left(\frac{\delta \pi}{2}\right)^2, \qquad R(\delta):=4\frac{1-k}{k} + \frac{\alpha_1}{\alpha_k}\frac{\delta \pi - \sin(\delta\pi) - 2\pi}{\pi}-1.
\end{equation}
Both $C(\delta)$ and $R(\delta)$ are non-decreasing functions of $\delta$, for $0<\delta \leq 1$, thus the best estimate is given by the values at the largest $\delta$. According to~\eqref{eq:bestdelta} this value is:
\begin{equation}
\delta_M:= \left(\sum_{j=1}^k \frac{\alpha_1}{\alpha_j}\left\lfloor\frac{\alpha_j}{\alpha_1}\right\rfloor\right)^{-1}.
\end{equation}
Notice that $C_1:=C(\delta_M)$ is homogeneous of degree $-1$ w.r.t. the singular values $\alpha_1,\ldots,\alpha_k$, while $R_1:=R(\delta_M)$ is homogeneous of degree $0$.
\end{proof}

The previous theorem holds if all the $x_j$ are different from zero. When some of the $x_j = 0$, continuous families might appear, but the topology of these families is controlled.

\begin{theorem}[The ``infinitesimal'' lower bound for the topology]\label{thm:lowertop} Let $G$ be a contact Carnot group.
There exist constants $R_1',C_1'$ such that for every $p=(x,z)\in G$ with $p\neq p_0$:
\begin{equation}
C_1'\frac{|z|\phantom{^2}}{\|x\|^2}+R_1'\leq \hat{\beta}(p).
\end{equation}
In particular, denoting by $\al_1$ and $\al_k$ the smallest and the largest singular values of $A$:
\begin{equation}
C_1'=\frac{8}{\pi} \frac{\alpha_1}{\alpha_k^2}\sin\left(\frac{\delta' \pi}{2}\right)^2\quad\textrm{with}\quad \delta' = \left(\sum_{j \notin I_0} \frac{\alpha_1}{\alpha_j} \left\lfloor \frac{\alpha_j}{\alpha_1}\right\rfloor\right)^{-1}.
\end{equation}
Moreover, $R_1'$ (resp. $C_1'$) is homogeneous of degree $0$ (resp. $-1$) in the singular values $\alpha_1,\ldots,\alpha_k$.
\end{theorem}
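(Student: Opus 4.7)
The plan is to mimic the proof of Theorem~\ref{thm:lower} \emph{mutatis mutandis}, restricting every sum over $j$ to the range $j\notin I_0$, and exploiting the elementary bound $\hat\beta(p)\geq\#\Gamma_0(p)$ to convert a lower bound on the number of isolated geodesics into a lower bound on the total Betti number. By Proposition~\ref{t:fiber}, the isolated geodesics are parametrised by the solutions $\lambda$ of the equation $z=G_0(\lambda)$ in $\R\setminus\Lambda'$, where
\begin{equation}
G_0(\lambda):=\sum_{j\notin I_0}\alpha_j\, g(\lambda\alpha_j)\|x_j\|^2,\qquad \Lambda':=\bigcup_{j\notin I_0}\Lambda_j,
\end{equation}
and $G_0$ is strictly convex on each connected component of its domain, with a pole of order two at every point of $\Lambda'$.

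First, I would dispose of the case $x=0$: by Theorem~\ref{thm:disjoint} we have $G_0\equiv 0$ and $\Lambda_p=\Lambda\cap\{\lambda z>0\}$ is infinite, so $\Gamma_\infty(p)$ is an infinite disjoint union of spheres and $\hat\beta(p)=\infty$, so the claimed inequality holds trivially under the convention $|z|/0=\infty$ of Remark~\ref{r:convention}. From now on I assume $x\neq 0$ and rerun the three auxiliary lemmas of the proof of Theorem~\ref{thm:lower} with the summation index restricted to $\{j\notin I_0\}$. The integral estimate of Lemma~\ref{l:claim} applied to the restricted family $\{\hat I_j\}_{j\notin I_0}$ now relaxes the contradiction bound to $s_n\leq(\#\{j\notin I_0\}-1)\,a_1$ and yields the admissible range
\begin{equation}
0<\delta<\left(\sum_{j\notin I_0}\frac{\alpha_1}{\alpha_j}\left\lfloor\frac{\alpha_j}{\alpha_1}\right\rfloor\right)^{-1},
\end{equation}
whose largest value is precisely the $\delta'$ of the statement. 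The analogs of Lemmas~\ref{l:estimates} and~\ref{l:estimates2} then produce an unbounded, increasing sequence $\{\hat\lambda_n\}_{n\in\N}$ with $\hat\lambda_n\in\bigcap_{j\notin I_0}\hat I_j$ and $G_0(\hat\lambda_n)\leq\alpha_k\|x\|^2\bigl[c_k(\delta')n+d_k(\delta')\bigr]$ for suitable constants.

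The final step converts this sequence into a count of solutions of $z=G_0(\lambda)$. Each $\hat\lambda_n$ avoids every pole in $\Lambda'$ by construction, hence lies in a well-defined connected component of $\R\setminus\Lambda'$ on which $G_0$ is strictly convex and tends to $+\infty$ at both endpoints; whenever $G_0(\hat\lambda_n)<|z|$, the equation $z=G_0(\lambda)$ admits two solutions on that component, each contributing $1$ to $\#\Gamma_0(p)$. Counting the indices $n$ for which $G_0(\hat\lambda_n)<|z|$ with exactly the same arithmetic as in the closing estimate of Theorem~\ref{thm:lower} then yields the desired bound with $C_1'=(8/\pi)(\alpha_1/\alpha_k^2)\sin(\delta'\pi/2)^2$ and an additive constant $R_1'$ homogeneous of degree $0$ in $\alpha_1,\dots,\alpha_k$.

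The only genuine subtlety, which is the main point to check, is that distinct $\hat\lambda_n$'s must land in distinct connected components of $\R\setminus\Lambda'$, so that the pairs of solutions they produce are counted without duplication. When $1\notin I_0$ this is automatic, since the reference intervals $I_{n,1}$ are separated by elements of $\Lambda_1\subset\Lambda'$. When $1\in I_0$ I would instead take as reference intervals $I_{n,j^*}$ with $j^*:=\min\{j\notin I_0\}$, whose endpoints do belong to $\Lambda'$; a straightforward re-computation of the integral bound in Lemma~\ref{l:claim} with this choice shows that only a bounded multiplicative factor is lost in the process, and this factor can be absorbed in the lower-order remainder $R_1'$ without affecting the leading constant $C_1'$ displayed in the statement.
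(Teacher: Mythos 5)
Your overall route coincides with the paper's: the authors also reduce the statement to $\hat\beta(p)\geq b(\Gamma_0(p))=\#\Gamma_0(p)$ and then simply rerun the proof of Thm.~\ref{thm:lower} with all sums restricted to $j\notin I_0$. Your separate treatment of $x=0$ (where $\hat\beta(p)=\infty$ because $\Gamma_\infty(p)$ is an infinite union of spheres) is a sensible addition, and the subtlety you isolate is real and is indeed glossed over by the paper's one-line ``analogous'' remark: when the index $1$ realizing the smallest singular value belongs to $I_0$, the endpoints of the reference cells $I_{n,1}$ are no longer poles of $G_0$, so distinct $\hat\lambda_n$ need not lie in distinct components of the domain of $G_0$, and the ``two solutions per good $\hat\lambda_n$'' count may double-count.

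The gap is in your resolution of that subtlety. Any loss at this step is \emph{multiplicative} on the leading term: if two $\hat\lambda_n$ share a component, the pairs of solutions of $z=G_0(\lambda)$ they produce coincide, which rescales the coefficient of $|z|/\|x\|^2$ and therefore cannot be ``absorbed in the lower-order remainder $R_1'$'' as you claim. Moreover, your proposed repair — taking reference cells $I_{n,j^*}$ with $j^*=\min\{j\notin I_0\}$ — does not reproduce the displayed constant: rerunning Lemma~\ref{l:claim} with cells of length $2\pi/\alpha_{j^*}$ changes the admissible threshold from $\delta'$ (which is built from the \emph{global} smallest value $\alpha_1$, as in the statement) to $\bigl(\sum_{j\notin I_0}\tfrac{\alpha_{j^*}}{\alpha_j}\lfloor\alpha_j/\alpha_{j^*}\rfloor\bigr)^{-1}$, and the final prefactor acquires $\alpha_{j^*}$ in place of $\alpha_1$; since $\alpha_{j^*}\geq\alpha_1$ but the new threshold can be smaller than $\delta'$, the constant you obtain is not comparable in general with $C_1'=\tfrac{8}{\pi}\tfrac{\alpha_1}{\alpha_k^2}\sin(\delta'\pi/2)^2$. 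So as written your argument proves the qualitative statement (existence of $C_1',R_1'$ with the stated homogeneities) but not the specific value of $C_1'$ when $1\in I_0$. If you instead keep the paper's construction ($I_{n,1}$ built from the global $\alpha_1$, auxiliary intervals $\hat I_{n,j}$ only for $j\notin I_0$, hence exactly the displayed $\delta'$), the correct accounting is that every component of the domain of $G_0$ has length at most $2\pi/\alpha_{j^*}\leq 2\pi/\alpha_1$, hence meets at most two consecutive cells $I_{n,1}$ and contains at most two of the $\hat\lambda_n$; this caps the loss at a factor $2$ on the leading coefficient — again multiplicative, so it must either be tracked explicitly (yielding $C_1'/2$) or removed by a finer argument if one insists on the constant exactly as stated.
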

\begin{proof}
Recall that $I_0 = \{j\mid x_j =0\}$. If $I_0 = \emptyset$, then the statement reduces to Thm.~\ref{thm:lowertop} since $\Gamma(p) = \Gamma_0(p)$ is finite and $\hat{\nu}(p) = \# \Gamma(p) = b(\Gamma(p)) = \hat\beta(p)$. Then assume $I_0 \neq \emptyset$. By Thm.~\ref{t:fiber}, $\Gamma(p) =\Gamma_0(p) \cup \Gamma_\infty(p)$ and:
\begin{equation}
\hat{\beta}(p) = b(\Gamma(p)) \geq b(\Gamma_0(p)) = \# \Gamma_0(p).
\end{equation}
In particular $\Gamma_0(p)$ is in one-to-one correspondence with its projection on the $\lambda$ component, since all the $u_j$ are uniquely determined by the point $p =(x,z)$ once $\lambda$ is known. Therefore
\begin{equation}
\# \Gamma_0(p) = \#\{\lambda \mid z = G_0(\lambda)\}, \qquad G_0(\lambda): = \sum_{j\notin I_0} \alpha_j g(\alpha_j \lambda) \|x_j\|^2.
\end{equation}
Now we only have to bound from below the number of solutions of $z = G_0(\lambda)$. The proof is analogous to the one of Thm.~\ref{thm:lower}, where only the indices $j \notin I_0$ appear.
\end{proof}

\section{Isometries and families of geodesics}

\subsection{Isometries of the Heisenberg group}\label{s:isoheis}

Isometries are distance-preserving transformations and, in Carnot groups, are smooth (see \cite{LeDonnesmooth}). The set of all sub-Riemannian isometries $\ISO(G)$ of a Carnot group is a Lie group, and any isometry is the composition of a group automorphism and a group translation (see \cite{Hamenstadt,LeDonneiso}). Here we consider the subgroup $\ISO_0(G)$ of isometries that fix the identity and we denote this subgroup simply $\ISO(G)$.

\begin{lemma}\label{l:isoH}
The isometry group of $\mathbb{H}_{2n+1}$ is:
\begin{equation}
\ISO(\mathbb{H}_{2n+1}) = \{(M,\theta) \mid \theta = \pm 1,\; MM^* = \I_{2n}, \; MJM^* = \theta J \},
\end{equation}
with the action of $\ISO(\mathbb{H}_{2n+1})$ on $\mathbb{H}_{2n+1}$ given by:
\begin{equation}
(M,\theta) \cdot (x,z) = (Mx,\theta z).
\end{equation}
Moreover:
\begin{equation}
\ISO(\mathbb{H}_{2n+1}) \simeq \mathrm{O}(2n) \cap \Sp(2n) \rtimes \Z_2 \simeq \U(n) \rtimes \Z_2.
\end{equation}
\end{lemma}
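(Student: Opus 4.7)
The plan is to combine the cited structural theorems on Carnot-group isometries with a direct bookkeeping of Lie algebra automorphisms that preserve the horizontal metric.

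First, I would invoke the results of \cite{Hamenstadt,LeDonneiso} recalled just before the lemma: every element of $\ISO_0(\mathbb{H}_{2n+1})$ is a smooth Lie group automorphism, hence is uniquely determined by a Lie algebra automorphism $\Phi:\mathfrak{g}\to\mathfrak{g}$. Since $\Phi$ must preserve the derived algebra $[\mathfrak{g}_1,\mathfrak{g}_1]=\mathfrak{g}_2$, it sends $\mathfrak{g}_2$ to itself, so $\Phi|_{\mathfrak{g}_2}$ is multiplication by some scalar $\theta\in\R$. Because a sub-Riemannian isometry fixes the horizontal distribution at the identity and restricts there to an isometry of $\metr$, the automorphism $\Phi$ also preserves the complement $\mathfrak{g}_1$ and $\Phi|_{\mathfrak{g}_1}$ is given by a matrix $M\in\O(2n)$ in the basis $f_1,\ldots,f_{2n}$.

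Second, I would translate the bracket relation $[f_i,f_j]=J_{ij}f_0$ under $\Phi$: applying $\Phi$ to both sides yields $(M^{*}JM)_{ij}f_0=\theta J_{ij}f_0$, i.e.
\begin{equation}
M^{*}JM=\theta J.
\end{equation}
Taking determinants and using $(\det M)^2=1$ gives $\theta^{2n}=1$, so the only real solutions are $\theta\in\{-1,+1\}$. Conversely any pair $(M,\theta)$ satisfying $MM^*=\I_{2n}$ and $M^*JM=\theta J$ defines a Lie algebra automorphism by $f_i\mapsto\sum_j M_{ji}f_j$, $f_0\mapsto\theta f_0$; this lifts to a group automorphism that, in the exponential coordinates of Lemma~\ref{l:expcoords}, reads $(x,z)\mapsto(Mx,\theta z)$, and it is manifestly an isometry of $\metr$ since $M\in\O(2n)$. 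This settles the first assertion.

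Third, for the identification with $\U(n)\rtimes\Z_2$, I would observe that the projection $(M,\theta)\mapsto\theta$ is a surjective homomorphism onto $\Z_2$ whose kernel is
\begin{equation}
\{M\in\O(2n)\mid M^{*}JM=J\}=\O(2n)\cap\Sp(2n)\simeq\U(n),
\end{equation}
the last isomorphism being the classical identification $A+iB\leftrightarrow\bigl(\begin{smallmatrix}A&-B\\B&A\end{smallmatrix}\bigr)$. A splitting of the sequence is provided by the explicit element $M_0=\diag(\I_n,-\I_n)\in\O(2n)$, which a short block computation shows satisfies $M_0^{*}JM_0=-J$. Since $M_0$ does not commute with a generic element of $\U(n)$, the extension is a genuine (non-direct) semidirect product $\U(n)\rtimes\Z_2$.

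The main obstacle is really only conceptual: one must be careful that the citations to \cite{Hamenstadt,LeDonneiso,LeDonnesmooth} genuinely reduce isometries fixing the origin to graded Lie algebra automorphisms that are orthogonal on $\mathfrak{g}_1$; once that reduction is in hand, the rest is a linear-algebraic exercise about orthogonal matrices commuting with $J$ up to sign.
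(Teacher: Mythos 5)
Your proof is correct and follows essentially the same route as the paper's: reduce isometries fixing the identity to stratification-preserving Lie algebra automorphisms orthogonal on $\mathfrak{g}_1$ via the cited structure theorems, extract the compatibility condition from $[f_i,f_j]=J_{ij}f_0$ to force $\theta=\pm 1$, read off the action in exponential coordinates, and split off $\Z_2$ over the kernel $\O(2n)\cap\Sp(2n)\simeq\U(n)$. The small deviations — the determinant argument for $\theta^2=1$, writing the condition as $M^*JM=\theta J$ (equivalent to $MJM^*=\theta J$ for orthogonal $M$ and $\theta=\pm1$), and the explicit splitting element $M_0=\diag(\I_n,-\I_n)$ in place of the paper's abstract $K$ with $KJK^*=-J$ — are immaterial.
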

\begin{proof}
A diffeomorphism is an isometry of Carnot groups fixing the identity if and only if is a Lie group isomorphism. In particular, it is induced by Lie algebra isomorphisms $\phi : \mathfrak{h}_{2n+1} \to \mathfrak{h}_{2n+1}$ that are orthogonal transformations on the first layer. Since $\phi$ is a Lie algebra isomorphism, it preserves the stratification. Then we can write $\phi = (M,\theta) \in \mathrm{O}(2n)\times \R$, such that
\begin{equation}
\phi(f_i) = \sum_{j=1}^{2n} M_{ji} f_j, \qquad \phi (f_0) = \theta f_0.
\end{equation}
The isomorphism condition $[\phi(f_i),\phi(f_j)] = J_{ij} \phi(f_0)$ implies:
\begin{equation}
MJM^* = \theta J.
\end{equation}
It follows that $\theta^{2} = 1$. Then:
\begin{equation}
\ISO(\mathbb{H}_{2n+1}) = \{(M,\theta) \mid \theta = \pm 1,\; MM^* = \I_{2n}, \; MJM^* = \theta J \}.
\end{equation}
This Lie algebra isomorphism generates a Lie group isomorphism that, in exponential coordinates, reads $(M,\theta) \cdot (x,z) = (Mx,\theta z)$. Let $\ISO(\mathbb{H}_{2n+1})_+ \lhd \ISO(\mathbb{H}_{2n+1})$ be the normal subgroup:
\begin{equation}
\ISO(\mathbb{H}_{2n+1})_{+} := \{(M,1) \mid MM^* = \I, \; MJM^* = J\} \simeq \mathrm{O}(2n)\cap \Sp(2n).
\end{equation}
Moreover, let $K$ be any matrix such that $KJK^* = -J$. Then, let :
\begin{equation}
H:=\{(\I,1),(K,-1)\} \simeq \Z_2
\end{equation}
be another subgroup of $\ISO(\mathbb{H}_{2n+1})$. Any element of $\ISO(\mathbb{H}_{2n+1})$ can be written uniquely as the product $mh$ of an element of $m \in \ISO(\mathbb{H}_{2n+1})_+$ and an element of $h \in H$. Thus the map $mh \mapsto (m,h)$ is a group isomorphism:
\begin{equation}
\ISO(\mathbb{H}_{2n+1}) = \ISO(\mathbb{H}_{2n+1})_{+} \rtimes H,
\end{equation}
where $H$ acts on $\ISO(\mathbb{H}_{2n+1})_+$ with the adjoint action. As we observed $\ISO(\mathbb{H}_{2n+1})_{+} \simeq \mathrm{O}(2n)\cap \Sp(2n)$ and $H \simeq \Z_2$, thus
\begin{equation}
\ISO(\mathbb{H}_{2n+1}) \simeq \mathrm{O}(2n)\cap \Sp(2n) \rtimes \Z_2.
\end{equation}
\begin{remark}
With this identification, the action of $\varphi : \Z_2 \to \mathrm{Aut}(\mathrm{O}(2n)\cap \Sp(2n))$ is:
\begin{equation}
\varphi(1)M = M, \qquad \varphi(-1) M = KMK^*,
\end{equation}
the product on $\mathrm{O}(2n)\cap \Sp(2n) \rtimes \Z_2$ reads:
\begin{equation}
(M,\theta) (M',\theta') = (M \varphi(\theta)M',\theta\theta'),
\end{equation}
and the action of $\mathrm{O}(2n)\cap \Sp(2n) \rtimes \Z_2$ on $\mathbb{H}_{2n+1}$ is:
\begin{equation}
(M,\theta) \cdot (x,z) = \begin{cases} (Mx,z) & \theta = 1, \\ (MKx,-z) & \theta = -1. \end{cases}
\end{equation}
\end{remark}
Finally, to see that $\mathrm{O}(2n) \cap \Sp(2n) \simeq \U(n)$, write $M \in \mathrm{GL}(2n,\R)$ as 
$M =\left(\begin{smallmatrix}
A & B \\ C & D 
\end{smallmatrix}\right)$.
Then $M \in \mathrm{O}(2n) \cap \Sp(2n) $ if and only if:
\begin{equation}
M = \begin{pmatrix}
A & B \\ -B & A
\end{pmatrix}, \qquad AA^* +BB^* = \I_n, \quad AB^* - BA^* = 0.
\end{equation}
Thus the map $M \mapsto  A + iB$ is the group isomorphism $\mathrm{O}(2n) \cap \Sp(2n) \simeq \U(n)$.
\end{proof}

\subsubsection{Stabilizers of points}
Let $p \in \mathbb{H}_{2n+1}$. We restrict our attention to the connected component $\ISO(\mathbb{H}_{2n+1})_+$ that contains the identity. As in the proof of Lemma~\ref{l:isoH}, we identify:
\begin{equation}
\ISO(\mathbb{H}_{2n+1})_+ = \U(n).
\end{equation}
With this identification, the action $\rho:\U(n) \times \mathbb{H}_{2n+1} \to \mathbb{H}_{2n+1}$ is
\begin{equation}
\rho(A+iB, (x,z)) = (Mx,z), \qquad M= \begin{pmatrix}
A & B \\ -B & A
\end{pmatrix}.
\end{equation}
What is the stabilizer subgroup $\ISO_p(\mathbb{H}_{2n+1}) \subseteq \ISO(\mathbb{H}_{2n+1})_+$ that fixes $p \in \mathbb{H}_{2n+1}$?

\begin{lemma}\label{l:isopointH}
Let $p=(x,z) \in \mathbb{H}_{2n+1}$. Then
\begin{equation}
\ISO_p(\mathbb{H}_{2n+1}) \simeq \begin{cases}
\U(n) & x = 0, \\
\U(n-1) & x \neq 0.
\end{cases}
\end{equation}
\end{lemma}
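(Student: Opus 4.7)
The action $\rho$ given above fixes the $z$-component. Hence for any $p=(x,z)\in \mathbb{H}_{2n+1}$, the stabilizer of $p$ coincides with the stabilizer of the horizontal component $x \in \R^{2n}$ under the representation $U=A+iB \mapsto M_U = \left(\begin{smallmatrix} A & B \\ -B & A \end{smallmatrix}\right)$. The first case is then immediate: if $x=0$, every element of $\U(n)$ fixes $p$, so $\ISO_p(\mathbb{H}_{2n+1}) \simeq \U(n)$.

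For the second case, the plan is to reduce the action on $\R^{2n}$ to the standard action of $\U(n)$ on $\C^n$. Identify $\R^{2n} \simeq \C^n$ by sending $x = (x', x'') \in \R^n \times \R^n$ to $\zeta := x' + i x'' \in \C^n$. A direct computation shows that under this identification
\begin{equation}
M_U x \;\longleftrightarrow\; (A+iB)(x' + i x'') = U\zeta,
\end{equation}
so that the representation $U \mapsto M_U$ is intertwined with the defining representation of $\U(n)$ on $\C^n$. Consequently
\begin{equation}
\ISO_p(\mathbb{H}_{2n+1}) \simeq \{ U \in \U(n) \mid U \zeta = \zeta\}, \qquad \zeta \neq 0.
\end{equation}

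It remains to identify the stabilizer of a nonzero vector in the standard $\U(n)$-action. Decompose $\C^n = \C \zeta \oplus \zeta^{\perp}$ orthogonally. Any $U \in \U(n)$ fixing $\zeta$ must preserve $\zeta^{\perp}$ (as unitary transformations preserve orthogonality), act as the identity on $\C\zeta$, and act as an arbitrary unitary transformation on the $(n-1)$-dimensional complex subspace $\zeta^\perp$. This gives a Lie group isomorphism $\ISO_p(\mathbb{H}_{2n+1}) \simeq \U(\zeta^{\perp}) \simeq \U(n-1)$, concluding the proof. The only genuinely substantive step is the verification that $M_U x$ corresponds to $U\zeta$ under the $\R^{2n} \simeq \C^n$ identification, which is a short block-matrix calculation; the rest is linear algebra of unitary representations.
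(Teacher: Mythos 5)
Your argument follows the paper's proof essentially verbatim: reduce to the stabilizer of the horizontal component (the action fixes $z$ on the identity component), then identify it with the stabilizer of a nonzero vector for the standard $\U(n)$-action on $\mathbb{C}^n$, which is $\U(n-1)$, the case $x=0$ being trivial. The one place where your write-up does not hold as stated is precisely the step you single out as substantive: with the identification $\zeta = x' + i x''$ one has $M_U x \leftrightarrow (A - iB)\zeta = \bar{U}\zeta$, \emph{not} $U\zeta$. Indeed $M_U x = (Ax' + Bx'',\, -Bx' + Ax'')$, while $(A+iB)(x'+ix'') = (Ax' - Bx'') + i(Bx' + Ax'')$, so the two agree only when $Bx' = Bx'' = 0$. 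The fix is immediate and harmless: either identify $x \leftrightarrow x' - i x''$ (this is what the paper does, writing the fixed-point condition as $(A+iB)(v-iw) = v-iw$), or keep $\zeta = x' + ix''$ and note that $U \mapsto M_U$ is intertwined with the conjugate representation, whose vector stabilizers are still copies of $\U(n-1)$ since $\bar{U}\zeta = \zeta$ if and only if $U\bar{\zeta} = \bar{\zeta}$ (equivalently, conjugation is an automorphism of $\U(n)$). With that correction, the remainder of your argument --- the orthogonal splitting $\mathbb{C}^n = \mathbb{C}\zeta \oplus \zeta^{\perp}$ and the identification of the stabilizer with the unitary group of $\zeta^{\perp}$ --- is correct and coincides with the paper's reasoning, which phrases the same fact as ``$A+iB$ is a unitary matrix with prescribed eigenvector of eigenvalue $1$''.
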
 
\begin{proof}
Let $A+iB \in \U(n)$. Let $p=(x,z) \in \mathbb{H}_{2n+1}$, with $x \neq 0$ and write $x=(v,w)$ with $v,w \in \R^n$. Then
\begin{equation}
\rho(A+iB,p) = p \Longleftrightarrow Mx =x \Longleftrightarrow (A+iB)(v-iw) = v-iw.
\end{equation}
This means that $A+iB$ must be a unitary matrix with a prescribed eigenvector $v-iw$ with eigenvalue $1$. This identifies a copy of $\U(n-1) \subset \U(n)$ that fixes $p$. On the other hand, if $x =0$, the point $p=(0,z)$ is fixed for any element of $\ISO(\mathbb{H}_{2n+1})_+$.
\end{proof}

\subsubsection{Stabilizers of geodesics}
Let $\gamma(t)$ be the geodesic with initial covector $(u,\lambda) \in T_{0}^*\mathbb{H}_{2n+1}$. What is the subgroup $\ISO_\gamma(\mathbb{H}_{2n+1}) \subset \ISO(\mathbb{H}_{2n+1})_+$ that fixes the whole geodesic? Recall that
\begin{equation}
\gamma(t) = \left( \int_0^t e^{-\lambda J \tau} ud\tau, -\frac{1}{2}  \int_0^t \langle e^{-\lambda J \tau} u, J \int_0^\tau e^{-\lambda J s} u ds \rangle d\tau  \right).
\end{equation}
\begin{lemma}\label{l:isogeoH}
	Let $(u,\lambda) \in T_0^*\mathbb{H}_{2n+1}$ be the initial covector of the geodesic $\gamma$. Then
\begin{equation}
\ISO_\gamma(\mathbb{H}_{2n+1}) \simeq 
 \begin{cases} \U(n) & u =0, \\ \U(n-1) & u \neq 0. \end{cases}
\end{equation}
\end{lemma}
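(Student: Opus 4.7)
The plan is to reduce the problem to computing the stabilizer of the vector $u\in \R^{2n}$ under the action of $\U(n)$ given in the proof of Lemma~\ref{l:isopointH}, and then invoke that lemma (or rather its underlying computation). Throughout I identify $\ISO(\mathbb{H}_{2n+1})_+$ with the subgroup of $M\in \O(2n)$ satisfying $MJM^*=J$, equivalently $MJ=JM$, and I let $M\in \ISO(\mathbb{H}_{2n+1})_+$ act on $(x,z)$ by $(x,z)\mapsto (Mx,z)$.

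First I would show that $M\in \ISO_\gamma(\mathbb{H}_{2n+1})$ if and only if $Mu=u$. The ``$\Rightarrow$'' direction is immediate by expanding
\begin{equation}
\gamma(t)=\bigl(tu+O(t^2),\,O(t^2)\bigr),
\end{equation}
so $M\gamma(t)=\gamma(t)$ for small $t$ forces $Mu=u$. The ``$\Leftarrow$'' direction is the computational core: assuming $Mu=u$ and using $MJ=JM$, hence $Me^{-\lambda J\tau}=e^{-\lambda J\tau}M$ for all $\tau$, I obtain
\begin{equation}
M\int_0^t e^{-\lambda J\tau}u\,d\tau=\int_0^t e^{-\lambda J\tau}Mu\,d\tau=\int_0^t e^{-\lambda J\tau}u\,d\tau,
\end{equation}
so the horizontal part of $\gamma(t)$ is preserved. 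The vertical part is preserved automatically because elements of $\ISO(\mathbb{H}_{2n+1})_+$ act trivially on the $z$-coordinate.

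It remains to identify the stabilizer of $u$ inside $\U(n)$. If $u=0$ every element stabilizes $u$, giving $\ISO_\gamma(\mathbb{H}_{2n+1})\simeq \U(n)$. If $u\neq 0$, writing $u=(v,w)$ with $v,w\in \R^n$ and identifying $M=A+iB$ with the action on $\C^n$ sending $v+iw$ to $(A+iB)(v+iw)$ (up to the sign convention used in Lemma~\ref{l:isopointH}), the condition $Mu=u$ becomes the requirement that $A+iB$ fix the nonzero complex vector $v\pm iw$; the unitary matrices with a prescribed unit eigenvector of eigenvalue $1$ form a copy of $\U(n-1)$, giving $\ISO_\gamma(\mathbb{H}_{2n+1})\simeq \U(n-1)$.

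The only mild subtlety is the implication $Mu=u\Rightarrow M\gamma\equiv \gamma$, which relies crucially on $M$ commuting with $J$ (so with $e^{-\lambda J\tau}$); this is exactly the symplectic part of the defining condition $MJM^*=J$ of $\ISO(\mathbb{H}_{2n+1})_+$ and is the only nontrivial algebraic input in the argument.
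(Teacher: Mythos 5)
Your proposal is correct and follows essentially the same route as the paper: differentiate at $t=0$ to get $Mu=u$, then use $MJ=JM$ to commute $M$ through $\int_0^t e^{-\lambda J\tau}\,d\tau$ and recover $Mx(t)=x(t)$, and finally identify the stabilizer of a nonzero $u$ with $\U(n-1)$ via the computation of Lemma~\ref{l:isopointH}. No gaps to report.
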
 
\begin{proof}
To stabilize $\gamma$ is equivalent to stabilize its ``horizontal'' component. Indeed, let $A+iB \in \U(n)$ be an isometry and $\gamma(t) = (x(t),z(t))$. Then $\rho(A + iB,(x(t),z(t))) = (x(t),z(t))$ if and only if $Mx(t) = x(t)$ for all $t$.
For $u \neq 0$, take one derivative w.r.t. $t$ at $t=0$; we obtain $Mu = u$, as in the proof of Lemma~\ref{l:isopointH}. This identifies a subgroup $\U(n-1) \subset \U(n)$. This condition also implies also $Mx(t) = x(t)$. In fact:
\begin{equation}
Mx(t) = M\int_0^t e^{-\tau \lambda J} u = \int_0^t e^{-\tau \lambda J}M u = x(t), 
\end{equation}
where we used the fact that, being an isometry, $MJ = JM$. Thus, in this case, $\ISO_\gamma(\mathbb{H}_{2n+1}) = \U(n-1)$. When $u=0$ the geodesic is the trivial one, and is stabilized by the whole $\U(n)$.
\end{proof}
\begin{remark}\label{r:samesub}
Notice that in this case $u=0$ if and only the geodesic is trivial $\gamma(t)\equiv 0$. When $u \neq 0$ two possibilities can occur: 1) $x\neq 0$, in which case  $\ISO_\gamma(\mathbb{H}_{2n+1})= \ISO_p(\mathbb{H}_{2n+1})\simeq \U(n-1)$; 2) $x=0$ and  the subgroup $\ISO_\gamma(\mathbb{H}_{2n+1})\simeq \U(n-1)$ is properly contained in $\ISO_p(\mathbb{H}_{2n+1})\simeq \U(n)$.
\end{remark}

\subsubsection{Isometrically equivalent geodesics}

\begin{definition}\label{d:equiv} Let $\gamma_1,\gamma_2$ be geodesics with the same endpoints. We say that $\gamma_1$ is \emph{isometrically equivalent} to $\gamma_2$ if there exists $g\in \textrm{ISO}(G)$ such that $\gamma_1=g \gamma_2$.
\end{definition}

Let $p \in \mathbb{H}_{2n+1}$, and $\gamma$ be a normal geodesic such that $\gamma(0) = 0$ and $\gamma(1) = p$. By acting with $\ISO_p(\mathbb{H}_{2n+1})$ we obtain families of isometrically equivalent by construction. Still, since $\ISO_\gamma(\mathbb{H}_{2n+1}) \subseteq \ISO_p(\mathbb{H}_{2n+1})$, we may obtain in this way non-distinct geodesics. To avoid duplicates, we have to take the quotient w.r.t. the subgroup $\ISO_\gamma(\mathbb{H}_{2n+1})$. 

Let $X_\gamma$ be the set of geodesics isometrically equivalent to a given one $\gamma$. This is a homogeneous space w.r.t. the action of $\ISO_p(\mathbb{H}_{2n+1})$. From Lemma~\ref{l:isopointH} and~\ref{l:isogeoH} we obtain the structure of $X_\gamma$.
\begin{proposition}\label{p:isoequivH}
Let $\gamma$ be a geodesic such that $\gamma(0) = 0$ and $\gamma(1) = p$, with initial covector $(u,\lambda) \in T_0^*\mathbb{H}_{2n+1}$. Then:
\begin{equation}
X_\gamma = \ISO_p(\mathbb{H}_{2n+1}) / \ISO_\gamma(\mathbb{H}_{2n+1}) \simeq \begin{cases} S^{2n-1} & u \neq 0, \lambda \in 2\pi \Z\setminus\{0\}, \\ {1} & \text{otherwise}.
\end{cases}
\end{equation}
\end{proposition}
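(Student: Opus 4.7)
The plan is to exploit the fact that $X_\gamma$ is by definition the orbit of $\gamma$ under the stabilizer $\ISO_p(\mathbb{H}_{2n+1})$ acting on the set of geodesics from $0$ to $p$, and that the stabilizer of $\gamma$ under this restricted action is precisely $\ISO_\gamma(\mathbb{H}_{2n+1})$ (note that $\ISO_\gamma \subseteq \ISO_p$ automatically since stabilizing every point of $\gamma$ stabilizes its endpoint). By the orbit--stabilizer theorem, this already gives the homeomorphism $X_\gamma \simeq \ISO_p(\mathbb{H}_{2n+1})/\ISO_\gamma(\mathbb{H}_{2n+1})$ as a homogeneous space.

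Next, I would carry out a case analysis based on the pair $(u,\lambda)$, combining Lemma~\ref{l:isopointH} (which tells us $\ISO_p$ depending on whether $x=0$ or not) and Lemma~\ref{l:isogeoH} (which tells us $\ISO_\gamma$ depending on whether $u=0$ or not). The link between the two dichotomies is the formula $x = I(\lambda) u$ from Proposition~\ref{propo:exp}, together with Proposition~\ref{propo2} which says that $I(\lambda)$ is invertible if and only if $\lambda \notin 2\pi\Z\setminus\{0\}$. Thus, for $u\neq 0$, one has $x=0$ precisely when $\lambda = 2m\pi$ with $m\in \Z\setminus\{0\}$.

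The cases then unfold as follows. If $u=0$, the geodesic is constant (hence excluded, or trivially $X_\gamma$ is a single point). If $u\neq 0$ and $\lambda \neq 2m\pi$, then $x\neq 0$, so both $\ISO_p$ and $\ISO_\gamma$ are isomorphic to $\U(n-1)$; by Remark~\ref{r:samesub} the inclusion is actually an equality and $X_\gamma$ is a point. If $u\neq 0$ and $\lambda = 2m\pi$ with $m\neq 0$, then $x=0$, so $\ISO_p \simeq \U(n)$ while $\ISO_\gamma\simeq \U(n-1)$, and the quotient is the standard realization $\U(n)/\U(n-1)\simeq S^{2n-1}$.

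The only delicate point is verifying that the $\U(n-1)$ inside $\ISO_p \simeq \U(n)$ in the third case is exactly the one produced by the action on $\gamma$, i.e.\ that the embedding coming from fixing the initial covector $u$ (via $Mu=u$) gives the \emph{same} embedding as the standard one that yields the sphere. This follows because in this case $p=(0,z)$ is vertical, so $\ISO_p$ is the full $\U(n)$ with no constraint on $u$, and the constraint $Mu=u$ imposed by $\ISO_\gamma$ singles out a codimension-one unitary subgroup whose quotient is the unit sphere in $\C^n\simeq \R^{2n}$ containing $u$ — a standard identification. Putting these cases together yields the claimed formula.
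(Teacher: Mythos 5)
Your proof is correct and follows essentially the same route as the paper: the same case analysis on $(u,\lambda)$ via Lemmas~\ref{l:isopointH} and~\ref{l:isogeoH}, with the equivalence $x=0\iff\lambda\in 2\pi\Z\setminus\{0\}$ (for $u\neq 0$) obtained through Prop.~\ref{propo2} where the paper does the same integral computation directly, and Remark~\ref{r:samesub} handling the equality $\ISO_p=\ISO_\gamma$ in the non-vertical case. The orbit--stabilizer identification you spell out is exactly the homogeneous-space structure the paper assumes in the paragraph preceding the proposition.
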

\begin{proof}
If $u=0$, then $\gamma(t) = 0$ is the trivial geodesic. In this case $X_0$ is just a point (the trivial geodesic). Then we may assume $u \neq 0$. Let $p=(0,z)$. An explicit computation leads to
\begin{equation}
0 = \int_0^1 e^{-\tau \lambda J} u \Longleftrightarrow \lambda = 2m\pi, \quad m \in \Z\setminus\{0\}.
\end{equation}
Then, when $\lambda = 2m\pi$ (and $u \neq 0$), according to Lemma~\ref{l:isopointH} and~\ref{l:isogeoH} we have:
\begin{equation}
\ISO_p(\mathbb{H}_{2n+1}) / \ISO_\gamma(\mathbb{H}_{2n+1}) = \U(n)/\U(n-1) \simeq S^{2n-1}.
\end{equation}
If $\lambda \neq 2m\pi$, then $p=(x,z)$ with $x \neq 0$. According to Lemma~\ref{l:isopointH} and~\ref{l:isogeoH} (see also Remark~\ref{r:samesub}) we have $\ISO_p(\mathbb{H}_{2n+1}) = \ISO_\gamma(\mathbb{H}_{2n+1}) = \U(n-1)$. Thus their quotient is the trivial group.
\end{proof}
\begin{remark}
In fact, in terms of the endpoint, the only possibility for having a family of  isometrically equivalent geodesics ending at $p$ is that $x =0$ zero. In fact, $\lambda = 2m\pi$ and $u\neq 0$ if and only if $p = (0,z)$ with $z \neq 0$. This means that for \emph{non-vertical} points $p$, all the geodesics connecting $p$ with the origin are not isometrically equivalent, while if $p=(0,z)$ is \emph{vertical}, for any geodesic $\gamma$ connecting $p$ with the origin, we have a family of distinct geodesics (all with the same energy) diffeomorphic to $S^{2n-1}$.
\end{remark}

\subsection{Isometries of contact Carnot groups}\label{s:isocarnot}

\begin{lemma}\label{l:isoG}
The isometry group of the contact Carnot group $G$ with parameters $(k,\vec{n},\vec{\alpha})$ is:
\begin{equation}
\ISO(G) = \{(M_1,\ldots,M_k,\theta) \mid \theta =\pm 1,\; M_i M_i^* = \I_{2n_i}, \; M_i J_{n_i}M_i^* = \theta J_{n_i}\},
\end{equation}
with the action of $\ISO(G)$ on $G$ given by:
\begin{equation}
(M_1,\ldots,M_k,\theta) \cdot (x_1,\ldots,x_k,z) = (M_1 x_1,\ldots,M_k x_k,\theta z).
\end{equation}
Moreover this group is isomorphic to:
\begin{align}
\ISO(G) & \simeq \mathrm{O}(2n_1) \cap \Sp(2n_1) \times \cdots \times \mathrm{O}(2n_k) \cap \Sp(2n_k) \rtimes \Z_2 \\
& \simeq \U(n_1) \times\cdots\times \U(n_k) \rtimes \Z_2.
\end{align}
\end{lemma}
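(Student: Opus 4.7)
The plan is to mirror the proof of Lemma~\ref{l:isoH}, decomposing the problem block-by-block according to the distinct singular values of $A$.

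First I would invoke the same structural result used in the Heisenberg case (\cite{LeDonnesmooth,Hamenstadt,LeDonneiso}): any isometry of a Carnot group fixing the identity is a Lie group automorphism, and hence is induced by a Lie algebra automorphism of $\mathfrak{g}$ that restricts to an orthogonal transformation on $\mathfrak{g}_1$. Since such a $\phi$ preserves the stratification $\mathfrak{g}=\mathfrak{g}_1\oplus \mathfrak{g}_2$ and $\dim\mathfrak{g}_2=1$, it is parametrized by a pair $(M,\theta)\in\O(2n)\times\R$ acting as
\begin{equation}
\phi(f_i)=\sum_{j=1}^{2n}M_{ji}f_j, \qquad \phi(f_0)=\theta f_0.
\end{equation}
The compatibility condition $[\phi(f_i),\phi(f_j)]=A_{ij}\phi(f_0)$ with the bracket relations $[f_i,f_j]=A_{ij}f_0$ translates, exactly as in Lemma~\ref{l:isoH}, into the matrix identity $M A M^{*}=\theta A$, which forces $\theta^2=1$.

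The main step — and the only genuine addition compared to the Heisenberg case — is to show that the orthogonal matrix $M$ must be \emph{block diagonal} with respect to the splitting $\R^{2n}=\R^{2n_1}\oplus\cdots\oplus\R^{2n_k}$ coming from the distinct singular values of $A$. The idea is to square the relation: from $MAM^{*}=\theta A$ and $\theta^2=1$ one gets $MA^{2}M^{*}=A^{2}$. Since
\begin{equation}
A^{2}=\diag\bigl(-\alpha_1^{2}\I_{2n_1},\ldots,-\alpha_k^{2}\I_{2n_k}\bigr)
\end{equation}
has eigenvalues $-\alpha_j^{2}$ with pairwise distinct absolute values (by assumption $0<\alpha_1<\cdots<\alpha_k$), its eigenspaces coincide with the $\R^{2n_j}$, and $M$ must permute them; distinctness of the $\alpha_j$ forces $M$ to preserve each block individually. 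Hence $M=\diag(M_1,\ldots,M_k)$ with $M_i\in\O(2n_i)$, and then $MAM^{*}=\theta A$ reduces, block by block, to $M_i J_{n_i}M_i^{*}=\theta J_{n_i}$. This gives the announced description of $\ISO(G)$ and the action on exponential coordinates.

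Finally, to identify the group structure, I would apply the same semi-direct product decomposition used in Lemma~\ref{l:isoH}, but block-wise. The identity component is
\begin{equation}
\ISO(G)_{+}=\{(M_1,\ldots,M_k,1)\mid M_i M_i^{*}=\I_{2n_i},\; M_i J_{n_i}M_i^{*}=J_{n_i}\}\simeq \prod_{i=1}^{k}\bigl(\O(2n_i)\cap\Sp(2n_i)\bigr),
\end{equation}
and picking, for each $i$, a matrix $K_i$ with $K_i J_{n_i}K_i^{*}=-J_{n_i}$ produces a $\Z_2$-subgroup that realizes the semi-direct product. The isomorphism $\O(2n_i)\cap\Sp(2n_i)\simeq \U(n_i)$ is then given by the same map $M_i=\bigl(\begin{smallmatrix}A_i & B_i\\ -B_i & A_i\end{smallmatrix}\bigr)\mapsto A_i+iB_i$ as in the proof of Lemma~\ref{l:isoH}, applied on each factor. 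I expect the block-diagonality step above to be the only non-routine point; the rest is a direct product version of what has already been done for $\mathbb{H}_{2n+1}$.
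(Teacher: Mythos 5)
Your proposal is correct and follows essentially the same route as the paper, whose proof of this lemma is just the remark that it is analogous to Lemma~\ref{l:isoH} ``after splitting the equations in the real eigenspaces associated with the eigenvalues of $A$''. Your block-diagonality step via $MA^2M^*=A^2$ and the distinctness of $\alpha_1<\cdots<\alpha_k$ is exactly that eigenspace splitting made explicit, and the rest (the relation $M_iJ_{n_i}M_i^*=\theta J_{n_i}$, the semidirect product with $\Z_2$, and the identification $\O(2n_i)\cap\Sp(2n_i)\simeq\U(n_i)$) matches the Heisenberg argument block by block.
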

\begin{proof}
The proof is analogous to the one of Lemma~\ref{l:isoH}, after splitting the equations in the real eigenspaces associated with the eigenvalues of $A$.
\end{proof}
\begin{remark}\label{r:restriction}
As above, we restrict to the connected component $\ISO(G)_+$. We identify:
\begin{equation}
\ISO(G)_+ = \U(n_1)\times \cdots\times \U(n_k).
\end{equation}
With this identification, the action $\rho: \ISO(G)_+ \times G \to G$ is
\begin{equation}
\rho(A_1+iB_1, \ldots, A_k+iB_k,(x_1,\ldots,x_k,z)) = (M_1 x_1,\ldots,M_k x_k,z),
\end{equation}
where $A_j+iB_j \in \U(n_j)$ for all $j=1,\ldots,k$ and $
M_j := \left(\begin{smallmatrix}
A_j & B_j \\ -B_j & A_j \end{smallmatrix}\right)$.
\end{remark}

\subsubsection{Stabilizers of points}

For $p \in G$, let $\ISO_p(G) \subseteq \ISO(G)_+$ its stabilizer.
\begin{lemma}\label{l:isopointG}
Let $p = (x_1,\ldots,x_k,z) \in G$. Then:
\begin{equation}
\ISO_{p}(G) = \begin{cases} \U(n_1) & x_1 = 0 \\ \U(n_1-1) & x_1 \neq 0 \end{cases} \times \cdots \times \begin{cases} \U(n_k) & x_k = 0 \\ \U(n_k-1) & x_k \neq 0 \end{cases}.
\end{equation}
\end{lemma}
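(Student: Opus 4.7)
The plan is to reduce the statement to the Heisenberg case (Lemma~\ref{l:isopointH}) applied componentwise, by exploiting the block diagonal structure of the action of $\ISO(G)_+$ described in Remark~\ref{r:restriction}.

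First, I would use Lemma~\ref{l:isoG} to write an arbitrary element of $\ISO(G)_+$ as a tuple $(M_1,\ldots,M_k)$ with each $M_j \in \O(2n_j) \cap \Sp(2n_j) \simeq \U(n_j)$, acting on $p = (x_1,\ldots,x_k,z)$ by
\begin{equation}
\rho(M_1,\ldots,M_k,(x_1,\ldots,x_k,z)) = (M_1 x_1,\ldots,M_k x_k,z).
\end{equation}
Since the $z$-component is automatically preserved (we restricted to the identity component, i.e.\ $\theta = +1$), the fixed point condition $\rho(M_1,\ldots,M_k,p) = p$ decouples into the $k$ independent conditions
\begin{equation}
M_j x_j = x_j, \qquad j=1,\ldots,k.
\end{equation}

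Next, I would observe that the stabilizer therefore splits as a direct product
\begin{equation}
\ISO_p(G) = \prod_{j=1}^k \mathrm{Stab}_{\U(n_j)}(x_j),
\end{equation}
where $\mathrm{Stab}_{\U(n_j)}(x_j)$ denotes the subgroup of $\U(n_j)$ fixing $x_j \in \R^{2n_j}$ under the action via $M_j = \bigl(\begin{smallmatrix} A_j & B_j \\ -B_j & A_j \end{smallmatrix}\bigr)$. For each factor I can now invoke exactly the argument from the proof of Lemma~\ref{l:isopointH}: writing $x_j = (v_j,w_j) \in \R^{n_j}\oplus\R^{n_j}$, the equation $M_j x_j = x_j$ is equivalent to $(A_j + i B_j)(v_j - i w_j) = v_j - i w_j$. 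If $x_j = 0$ this is automatic, giving the whole $\U(n_j)$; if $x_j \neq 0$, then $v_j - i w_j$ is a nonzero vector in $\C^{n_j}$ forced to be a unit eigenvector of eigenvalue $1$, which cuts out a copy of $\U(n_j - 1)$ acting on its orthogonal complement.

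This is essentially routine once the decoupling in the first paragraph is in place, so no substantial obstacle is expected; the only point that requires a little care is making sure the isomorphism $\O(2n_j)\cap \Sp(2n_j) \simeq \U(n_j)$ from the proof of Lemma~\ref{l:isoH} intertwines the action on $\R^{2n_j}$ with the standard action of $\U(n_j)$ on $\C^{n_j}$, so that the Heisenberg computation can indeed be transferred verbatim to each factor.
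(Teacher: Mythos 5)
Your proposal is correct and follows essentially the same route as the paper: the block-diagonal action from Remark~\ref{r:restriction} decouples the fixed-point condition into $M_j x_j = x_j$ for each $j$, so the stabilizer factors as a product and each factor is identified via Lemma~\ref{l:isopointH} (the paper simply cites that lemma for each Heisenberg factor $\mathbb{H}_{2n_j+1}$ rather than re-running the eigenvector argument inline as you do).
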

\begin{proof}
By Remark~\ref{r:restriction}, the isometry $(A_1+iB_1,\ldots,A_k+iB_k) \in \ISO(G)_+$ fixes $p =(x_1,\ldots,x_k,z)$ if and only if $(A_j+iB_j)x_j = x_j$ for all $j=1,\ldots,k$. This means that $A_j+iB_j\in \ISO(\mathbb{H}_{2n_j+1})_+$ fixes the point $p_j := (x_j,z) \in \mathbb{H}_{2n_j+1}$, for all $j=1,\ldots,k$. Then :
\begin{equation}
\ISO_p(G) = \ISO_{p_1}(\mathbb{H}_{2n_1+1}) \times \cdots \times \ISO_{p_k}(\mathbb{H}_{2n_k+1}),
\end{equation}
and the result follows from Lemma~\ref{l:isopointH}.
\end{proof}

\subsubsection{Stabilizers of geodesics}

Let $(u,\lambda) \in T_{0}^*G$. Let $\gamma$ be the associated geodesic, such that $\gamma(0) =0$ and $p=\gamma(1)$. What is the stabilizer subgroup of the geodesic $\ISO_\gamma(G) \subseteq \ISO_p(G)$? As usual, we write $u=(u_1,\ldots,u_k)$, with $u_i \in \R^{2n_i}$.  Accordingly $\gamma(t) = (x_1(t),\ldots,x_k(t),z(t))$, with $x_i(t) \in \R^{2n_i}$. In particular:
\begin{align}
x_i(t) & = \int_0^t e^{-\tau\lambda\alpha_i J } u_i d\tau, \\
z(t) & = -\frac{1}{2}\sum_{i=1}^k \int_0^t \langle e^{-\tau\lambda\alpha_i J}u_i, \alpha_i J\int_{0}^\tau e^{-s \lambda\alpha_i J} u_i ds \rangle d\tau,
\end{align}
where we suppressed the explicit mention of the dimension of the matrices $J_{n_i}$.
Notice that $u =0$ if and only if the geodesic is the trivial one $\gamma(t) \equiv 0$.

\begin{lemma}\label{l:isogeoG}
Let $(u_1,\ldots,u_k,\lambda) \in T_0^*G$ the initial covector of the geodesic $\gamma$. Then:
\begin{equation}
\ISO_\gamma(G) = \begin{cases} \U(n) & u_1 =0 \\ \U(n-1) & u_1 \neq 0 \end{cases} \times \cdots\times \begin{cases} \U(n) & u_k =0 \\ \U(n-1) & u_k \neq 0 \end{cases}.
\end{equation}
\end{lemma}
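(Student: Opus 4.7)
My plan is to reduce the computation to $k$ independent copies of the Heisenberg case treated in Lemma~\ref{l:isogeoH}, exploiting the block structure of both $A$ and the isometry group $\ISO(G)_+$.

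First, I would observe that by Lemma~\ref{l:isoG} and Remark~\ref{r:restriction}, an element $g=(A_1+iB_1,\ldots,A_k+iB_k) \in \ISO(G)_+$ acts on $G$ by $g\cdot(x_1,\ldots,x_k,z)=(M_1 x_1,\ldots,M_k x_k,z)$, where each $M_j \in \O(2n_j)\cap \Sp(2n_j)$. In particular the $z$-coordinate is automatically preserved. Hence the condition $g\cdot\gamma(t)=\gamma(t)$ for all $t \in [0,1]$ is equivalent to the system
\begin{equation}
M_j\, x_j(t) = x_j(t), \qquad \forall\, t \in [0,1],\quad j=1,\ldots,k,
\end{equation}
which decouples completely into $k$ independent conditions, one for each block $j$.

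Next, I would analyze each block separately. Fix $j$. If $u_j = 0$, then the explicit formula $x_j(t)=\int_0^t e^{-\tau\lambda\alpha_j J}u_j\,d\tau$ gives $x_j(t)\equiv 0$, so the condition is vacuous and the full factor $\U(n_j)$ stabilizes $x_j(t)$. If $u_j \neq 0$, I would argue exactly as in Lemma~\ref{l:isogeoH}: differentiating $M_j x_j(t)=x_j(t)$ at $t=0$ gives $M_j u_j = u_j$, and conversely, using $M_j J = J M_j$ (which holds because $M_j \in \O(2n_j)\cap \Sp(2n_j)$),
\begin{equation}
M_j x_j(t) = M_j \int_0^t e^{-\tau\lambda\alpha_j J} u_j\, d\tau = \int_0^t e^{-\tau\lambda\alpha_j J} M_j u_j\, d\tau = x_j(t).
\end{equation}
So $M_j$ stabilizes the $j$-th partial curve iff it fixes $u_j$, and under the identification $\U(n_j)\simeq \O(2n_j)\cap \Sp(2n_j)$ the subgroup of matrices fixing a nonzero vector $u_j\in\R^{2n_j}$ is a copy of $\U(n_j-1)$ (same count as in Lemma~\ref{l:isopointH}).

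Combining the $k$ independent conditions yields the product decomposition in the statement. The only mildly delicate point is the commutation $M_j J_{n_j} = J_{n_j} M_j$ used in the converse direction (so that fixing the initial velocity $u_j$ is enough to fix the entire block-trajectory $x_j(t)$); this is immediate from $M_j\in \O(2n_j)\cap \Sp(2n_j)$, as already noted, so in fact no step presents a real obstacle beyond careful bookkeeping of the block decomposition.
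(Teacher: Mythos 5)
Your proposal is correct and follows essentially the same route as the paper: both decouple the stabilizer condition block by block using the structure of $\ISO(G)_+$ from Remark~\ref{r:restriction}, and reduce each block to the Heisenberg case of Lemma~\ref{l:isogeoH} (which you re-derive inline via $M_j u_j = u_j$ and the commutation $M_jJ = JM_j$, rather than citing it, but the argument is identical). No gaps.
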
 
\begin{proof}
Let $(A_1+iB_1,\ldots,A_k +iB_k) \in \ISO(G)_+$. According to Remark~\ref{r:restriction}, this isometry fixes the geodesic $(x_1(t),\ldots,x_k(t),z(t))$ if and only if
\begin{equation}
M_j x_j(t) = x_j(t) , \qquad M_j = \begin{pmatrix}
A_j & B_j \\ -B_j & A_j
\end{pmatrix}, \qquad \forall j=1,\ldots,k.
\end{equation}
This implies that $A_j+iB_j \in \ISO(\mathbb{H}_{2n_j+1})_+$ fixes the geodesic $\gamma_j$ of $\mathbb{H}_{2n_j+1}$ associated with the initial covector $(u_j,\alpha_j\lambda)$. Then:
\begin{equation}
\ISO_\gamma(G) = \ISO_{\gamma_1}(\mathbb{H}_{2n_1+1}) \times \cdots \times \ISO_{\gamma_k}(\mathbb{H}_{2n_k+1}),
\end{equation}
and the result follows from Lemma~\ref{l:isogeoH}.
\end{proof}
\subsubsection{Isometrically equivalent geodesics}

Let $\gamma$ be a geodesic connecting the origin with a point $p \in G$. Let $(u_1,\ldots,u_k,\lambda)$ be the initial covector of the geodesic, and let $p =(x_1,\ldots,x_k,z)$ its endpoint. Let $X_\gamma$ be the set of geodesic isometrically equivalent to the given one. This is an homogeneous space w.r.t. the action of $\ISO_p(G)$.
\begin{proposition}\label{p:isoequivG}
Let $G$ a contact Carnot group with parameters $(k,\vec{n},\vec{\alpha})$. Let $\gamma$ be a geodesic in $G$ with initial covector $(u_1,\ldots,u_k,\lambda)$, such that $\gamma(0) = 0$ and $\gamma(1) = p$. Then:
\begin{equation}\label{eq:factor}
X_\gamma = \ISO_p(G) / \ISO_\gamma(G) \simeq  X_{\gamma_1} \times \cdots \times X_{\gamma_k},
\end{equation}
where:
\begin{equation}
X_{\gamma_i} :=  \begin{cases}
S^{2n_i-1} & u_i \neq 0,\quad \alpha_i\lambda = 2m_i \pi, \\
1 & \text{otherwise},
\end{cases} \qquad m_i \in \Z \setminus \{0\}.
\end{equation}
\end{proposition}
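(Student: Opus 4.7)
The plan is to reduce the claim to the Heisenberg case handled in Proposition~\ref{p:isoequivH}, by exploiting the fact that both $\ISO_p(G)$ and $\ISO_\gamma(G)$ split as products over the $k$ invariant eigenspaces of the matrix $A$, in a way compatible with the product structure of the connected component $\ISO(G)_+\simeq \U(n_1)\times\cdots\times \U(n_k)$.

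First I would record the product decomposition of the stabilizers, which has already been established in the proofs of Lemmas~\ref{l:isopointG} and~\ref{l:isogeoG}:
\begin{equation}
\ISO_p(G)=\prod_{i=1}^k \ISO_{p_i}(\mathbb{H}_{2n_i+1}),\qquad \ISO_\gamma(G)=\prod_{i=1}^k \ISO_{\gamma_i}(\mathbb{H}_{2n_i+1}),
\end{equation}
where $p_i=(x_i,z)\in \mathbb{H}_{2n_i+1}$ and $\gamma_i$ is the geodesic in $\mathbb{H}_{2n_i+1}$ with initial covector $(u_i,\alpha_i\lambda)$. These factorizations are compatible since, by Remark~\ref{r:restriction}, each factor $\U(n_i)\subset \ISO(G)_+$ acts only on the $i$-th horizontal slot and leaves the remaining slots and the vertical coordinate $z$ untouched.

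Next, since $\ISO_{\gamma_i}(\mathbb{H}_{2n_i+1})\subseteq \ISO_{p_i}(\mathbb{H}_{2n_i+1})$ inside each factor, the quotient of the products equals the product of the quotients as homogeneous $\ISO_p(G)$-spaces:
\begin{equation}
X_\gamma=\ISO_p(G)/\ISO_\gamma(G)\simeq \prod_{i=1}^k \ISO_{p_i}(\mathbb{H}_{2n_i+1})/\ISO_{\gamma_i}(\mathbb{H}_{2n_i+1}) = \prod_{i=1}^k X_{\gamma_i}.
\end{equation}
Applying Proposition~\ref{p:isoequivH} to each Heisenberg factor with initial covector $(u_i,\alpha_i\lambda)$ gives $X_{\gamma_i}\simeq S^{2n_i-1}$ precisely when $u_i\neq 0$ and $\alpha_i\lambda=2m_i\pi$ with $m_i\in \Z\setminus\{0\}$, and $X_{\gamma_i}$ is a point otherwise, yielding the stated formula.

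There is no real obstacle beyond assembling the previous lemmas; the one point that deserves care is to verify that the identification of $X_\gamma$ with a product is genuinely $\ISO_p(G)$-equivariant, and not merely a set-theoretic bijection. This is, however, immediate from the block-diagonal description of the action recalled in Remark~\ref{r:restriction}, which ensures that the $i$-th factor of an element of $\ISO_p(G)$ acts only on the $i$-th component of the coset representative.
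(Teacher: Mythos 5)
Your argument is correct and follows essentially the same route as the paper: both decompose $\ISO_p(G)$ and $\ISO_\gamma(G)$ into products of Heisenberg stabilizers via Lemmas~\ref{l:isopointG} and~\ref{l:isogeoG}, pass the quotient through the product factor by factor, and then invoke the Heisenberg case (Prop.~\ref{p:isoequivH}) for each factor $\gamma_i$ with covector $(u_i,\alpha_i\lambda)$. Your additional remark on $\ISO_p(G)$-equivariance of the product identification is a harmless refinement of what the paper leaves implicit.
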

\begin{proof}
By the proofs of Lemma~\ref{l:isogeoG} and~\ref{l:isopointG} we have
\begin{equation}
\ISO_p(G) = \ISO_{p_1}(\mathbb{H}_{2n_1+1}) \times \cdots \times \ISO_{p_k}(\mathbb{H}_{2n_k+1}),
\end{equation}
\begin{equation}
\ISO_\gamma(G) = \ISO_{\gamma_1}(\mathbb{H}_{2n_1+1}) \times \cdots \times \ISO_{\gamma_k}(\mathbb{H}_{2n_k+1}),
\end{equation}
where $p_i = (x_i,z) \in \mathbb{H}_{2n_i+1}$ and $\gamma_i$ is the normal geodesic in $\mathbb{H}_{2n_i+1}$ with initial covector $(u_i,\alpha_i\lambda) \in T_0^*\mathbb{H}_{2n_i+1}$, for all $i=1,\ldots,k$. 
Since each factor $\ISO_{\gamma_i}(\mathbb{H}_{2n_i+1})$ is a subgroup of the corresponding $\ISO_{p_i}(\mathbb{H}_{2n_i+1})$, the quotient of the direct product of Lie groups factors in the direct product of the quotients:
\begin{equation}
\cart_{i=1}^k \ISO_{p_i}(\mathbb{H}_{2n_i+1}) / \cart_{i=1}^k \ISO_{\gamma_i}(\mathbb{H}_{2n_i+1}) = \cart_{i=1}^k \ISO_{p_i}(\mathbb{H}_{2n_i+1}) / \ISO_{\gamma_i}(\mathbb{H}_{2n_i+1}).
\end{equation}
Then:
\begin{equation}
X_\gamma = \cart_{i=1}^k X_{\gamma_i}, \qquad X_{\gamma_i} = \ISO_{p_i}(\mathbb{H}_{2n_i+1}) / \ISO_{\gamma_i}(\mathbb{H}_{2n_i+1}).
\end{equation}
Recall that the geodesic $\gamma_i$ of $\mathbb{H}_{2n_i+1}$ is associated with initial covector $(u_i,\alpha_i \lambda)$ by construction. Thus for each factor $X_{\gamma_i}$ we proceed as in the proof of Prop.~\ref{p:isoequivH} and we obtain the result.
\end{proof}

\begin{example}
Prop.~\ref{p:isoequivG} implies that the for generic geodesic (i.e. with generic initial covector), the manifold $X_\gamma$ of distinct isometrically equivalent geodesics is trivial.
\end{example}


\begin{example}
Consider the generic Carnot group $G$, associated with the generic choice of $A \in \so(2n)$. In this case $n =k$, $n_1=\cdots=n_k = 1$ and all the $\alpha_i$ are not commensurable. The only geodesics $\gamma$ admitting a non-trivial manifold $X_\gamma$ of distinct isometrically equivalent geodesics are those with initial covector $(u,\lambda)$, such that $\lambda = 2m\pi/\alpha_{i}$ for a unique $i \in  \{1,\ldots,n\}$ and $m \in \Z \setminus\{0\}$. In this case: $X_\gamma  \simeq S^1$. In fact $\alpha_j \lambda \neq 2m_j\pi$ for all $j \neq i$ otherwise some $\alpha_j$ would be commensurable with $\alpha_i$. Then there is only one factor in Eq.~\eqref{eq:factor}. Notice that these geodesics have endpoint $(x,z)$, with $z \neq 0$, $x_i = 0$.
\end{example}

\subsection{Families of isometrically non-equivalent geodesics}\label{s:families}

We ended the previous section discussing families $X_\gamma$ of \emph{isometrically equivalent} geodesics connecting two points. These families arose as homogeneous space w.r.t. the stabilizer $\ISO_p(G)$ of the final point $p = \gamma(1)$ of a fixed geodesic $\gamma$. In this section we adopt a different point of view, and we investigate how many \emph{isometrically non-equivalent} geodesics join two points in $G$.

It may well be that some of the families of Thm.~\ref{thm:disjoint} contain geodesics that are isometrically equivalent, as in Def.~\ref{d:equiv}. This is the case in the Heisenberg groups $\mathbb{H}_{2n+1}$, where all the families are $S^1$ of equivalent geodesics. Is this the correct picture for any contact Carnot group? In other words, are the spheres appearing in $\Gamma_\infty(p)$ families of isometrically equivalent geodesics? In general the answer is no, and the picture is more complicated as shown in the next theorem.


\begin{theorem}\label{t:noninvnoneqfamilies}
Let $G$ be a contact Carnot group. The set $\bar \Gamma_{\infty}(p) $ of  equivalence classes of isometrically equivalent geodesics ending at $p \neq p_0$ is homeomorphic to the disjoint union:
\begin{equation}
\bar \Gamma_{\infty}(p)  \simeq\bigcup_{\lambda \in {\Lambda}_p} S_{\geq 0}^{\ell(\lambda)-1}\qquad \ell(\lambda):=\# L(\lambda),
\end{equation}
where $S^n_{\geq 0} = S^{n} \cap \R^{n+1}_{\geq 0}$ is the intersection of the $n$-sphere with the positive quadrant in $\R^{n+1}$ and $\Lambda_p$ is defined in \eqref{lap}.
\end{theorem}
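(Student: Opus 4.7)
The plan is to start from Theorem \ref{thm:disjoint}, which decomposes
\begin{equation*}
\Gamma_\infty(p) = \bigsqcup_{\lambda \in \Lambda_p} S^{2N(\lambda)-1},
\end{equation*}
and to pass to the quotient by the action of $\ISO_p(G)$. The first observation is that an element of the identity component $\ISO(G)_+$ acts on covectors by $(u_1,\dots,u_k,\lambda)\mapsto(M_1 u_1,\dots,M_k u_k,\lambda)$, with $M_j \in \U(n_j)$ (Remark \ref{r:restriction}), so $\lambda$ is preserved and each slice $S^{2N(\lambda)-1}$ is invariant. The problem therefore reduces to computing, for each $\lambda\in\Lambda_p$, the quotient of $S^{2N(\lambda)-1}$ by the induced action of $\ISO_p(G)$.

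Fix $\lambda\in\Lambda_p$. By Lemma \ref{l:isopointG}, $\ISO_p(G)=\prod_{j=1}^k H_j$, where $H_j=\U(n_j)$ if $x_j=0$ (which covers all $j\in L(\lambda)\subseteq I_0$) and $H_j=\U(n_j-1)$ if $x_j\neq 0$ (necessarily $j\notin L(\lambda)$). For every $j\notin L(\lambda)$ the coordinate $u_j$ is determined by $p$: either $j\notin I_0$, in which case $u_j=I(\lambda\alpha_j)^{-1}x_j$ and, since $M_j$ commutes with $I(\lambda\alpha_j)=a\I+bJ$ (it lies in $\O(2n_j)\cap\Sp(2n_j)$), the condition $M_j x_j=x_j$ forces $M_j u_j=u_j$; or $j\in I_0\setminus L(\lambda)$, in which case $u_j=0$ and is fixed trivially. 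Hence only the factor $\prod_{j\in L(\lambda)}\U(n_j)$ acts non-trivially, and it does so on the coordinates $(u_j)_{j\in L(\lambda)}\in\bigoplus_{j\in L(\lambda)}\R^{2n_j}$.

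The orbit computation is then immediate. Identifying $\R^{2n_j}\simeq\C^{n_j}$, the group $\U(n_j)$ acts transitively on concentric spheres, so the orbits of the product action on $\bigoplus_{j\in L(\lambda)}\R^{2n_j}$ are in bijection with the vector of norms $(\|u_j\|)_{j\in L(\lambda)}\in\R^{\ell(\lambda)}_{\geq 0}$. Intersecting with the sphere $S^{2N(\lambda)-1}$ cut out by the normalization
\begin{equation*}
\sum_{j\in L(\lambda)}\|u_j\|^2 \;=\; 2\lambda\Bigl(z-\sum_{j\notin I_0}\alpha_j g(\lambda\alpha_j)\|x_j\|^2\Bigr)\;>\;0
\end{equation*}
from Theorem \ref{thm:disjoint} yields exactly the closed positive octant $S^{\ell(\lambda)-1}_{\geq 0}$. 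Assembling these quotients over $\lambda\in\Lambda_p$ gives the claimed homeomorphism.

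The main (modest) subtlety is not the orbit analysis itself but rather the treatment of the non-identity component of $\ISO(G)$, whose generator sends $(u,\lambda)\mapsto(Ku,-\lambda)$ and $(x,z)\mapsto(Kx,-z)$: it never stabilizes $p$ when $z\neq 0$, and the computation above, performed inside $\ISO(G)_+$ consistently with the convention adopted throughout Section \ref{s:isocarnot}, yields the result directly; the degenerate case $z=0$ requires only bookkeeping with the $\Z_2$-symmetry of $\Lambda_p$ and does not alter the statement.
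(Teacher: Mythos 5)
Your proposal is correct and takes essentially the same route as the paper's proof: both quotient the spheres $S^{2N(\lambda)-1}$ of Thm.~\ref{thm:disjoint} by the stabilizer $\ISO_p(G)$ from Lemma~\ref{l:isopointG}, observe that only the factors $\U(n_j)$ with $j\in L(\lambda)$ act non-trivially (the remaining $u_j$ being forced), and identify the orbit space via the norm map with $S^{\ell(\lambda)-1}_{\geq 0}$. Your extra remarks (the commutation of $M_j$ with $I(\lambda\alpha_j)$, and the $\Z_2$ component, which is vacuous since $\Lambda_p=\emptyset$ when $z=0$) only make explicit points the paper leaves to the reader.
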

\begin{remark}

When all the $\alpha_1,\ldots,\alpha_k$ are pair-wise non-commensurable, then $\# L(\lambda) = 1$ for all $\lambda \in \Lambda_p\subseteq \Lambda$ and $N(\lambda) = 1$.  Thus all the ``continuous'' families in $\Gamma_\infty(p)$ are topologically $S^1$ of isometrically equivalent geodesics. Nevertheless, for \emph{resonant structures} (i.e. when some of the $\alpha_i$ are commensurable) there exist continuous families of non-isometrically equivalent geodesics.
\end{remark}
\begin{proof}
 Fix $\bar\lambda \in \Lambda_p$. Without loss of generality, we can assume that $L(\bar\lambda) = \{1,\ldots,\ell\}$ for $\ell = \#L(\bar\lambda)$. This implies $x_1=\cdots=x_\ell = 0$ by Prop.~\ref{propo2}. From Lemma~\ref{l:isopointG}:
\begin{equation}
\ISO_p(G) = \U(n_1)\times \cdots \times \U(n_\ell) \times \U(n_{\ell+1}-1) \times \cdots \times \U(n_k-1),
\end{equation}
and the action $\rho: \ISO_p(G) \times G \to G$ is:
\begin{equation}
\rho(A_1+iB_1,\ldots,A_k+iB_k,(x_1,\ldots,x_k,z))= (M_1	x_1,\ldots, M_k x_k,z), 
\end{equation}
with $M_i = \left(\begin{smallmatrix}
A_j & B_j \\ -B_j & A_j
\end{smallmatrix}\right)$.

In particular $\ISO_p(G)$ is the subgroup that fixes all the components $x_{\ell+1},\ldots,x_k$ (with no other restriction on the other components). It is easy to check that the action on the initial covector $(u_1,\ldots,u_k,\lambda)$ is exactly the same. In particular, $\ISO_p(G)$ is the subgroup that fixes all the components $u_{\ell+1},\ldots,u_k$ with no other restriction on the other components.

Consider one connected component of $\Gamma_\infty(p)$, given by $\Gamma_\infty(p) \cap \{ \lambda = \bar\lambda\}$. As in the proof of Thm. 22, specifically equation \eqref{eq:below}, and assuming without loss of generality that $L(\bar\lambda) = \{1,\dots,\ell\}$, we have that
\begin{equation}
\Gamma_\infty(p) \cap \{ \lambda = \bar\lambda\} = \left\{ (u_1,\dots,u_\ell) \in \R^{2\ell} \,\Bigg\vert\, \sum_{j \in L(\bar\lambda)} \|u_j\|^2 = c(\bar\lambda) \right\} \simeq S^{2N(\bar\lambda)-1},
\end{equation}
where $c(\bar{\lambda}) > 0$, $\ell = \ell(\bar\lambda) = \# L(\bar \lambda)$, and $N(\bar\lambda) = \sum_{j \in L(\bar\lambda)} n_j$.

The action of $\ISO_p(G)$ on $S^{2N(\bar\lambda)-1}$ is the action of $\U(n_1)\times \cdots \times \U(n_\ell)$, namely each copy of $\U(n_j)$ acts on each component $u_j$ with $j \in L(\bar \lambda)$. Thus consider the map:
\begin{equation}
\xi :  S^{2N(\bar\lambda)-1} \to S^{\ell-1}_{\geq 0} \qquad \xi(u_1,\ldots,u_\ell) := (\|u_1\|,\ldots,\|u_\ell\|).
\end{equation}
This map indeed descends to a continuous map on the quotient. 
\begin{equation}
\tilde\xi : S^{2N(\bar\lambda)-1}/\U(n_1)\times \cdots \times \U(n_\ell) \to S_{\geq 0}^{\ell-1}.
\end{equation}
It is bijective (recall that $u_j \in \R^{2n_j}$ and the action of $\U(n_j)$ on $\R^{2n_j}$ is the classical action of $\U(n_j)$ on $\mathbb{C}^{n_j}$, which is transitive on spheres with the same radius). 
Being a continuous map from a compact space to a Hausdorff space, $\bar{\xi}$ is closed, then is open, thus it is an homemorphism.
\end{proof}

\section{Contact sub-Riemannian manifolds}\label{s:contact}


\subsection{The nilpotent approximation}

Let $M$ be a contact sub-Riemannian manifold and let $p_0 \in M$. All our considerations being local, up to restriction to a coordinate neighbourhood $U$ of $p_0$, we assume that $M = \R^{2n+1}$ and that the sub-Riemannian structure ($\distr,\metr$) on $M$ is defined by a set $f_1,\ldots,f_{2n}$ of global orthonormal vector fields. Namely
\begin{equation}
\distr = \spn\{f_1,\ldots,f_{2n}\}, \qquad \text{and} \qquad \langle f_i |f_j \rangle = \delta_{ij}.
\end{equation}
The vector fields $f$ are assumed to be bounded with all derivatives as well. This will certainly be true if they are the coordinate representation of local orthonormal fields on a neighbourhood $U$ of $p_0$ of a larger sub-Riemannian manifold. 
\begin{definition}\label{d:adapted}
Coordinates $(x,z) \in \R^{2n}\times \R$ are \emph{adapted} at $p_0$ if they are centred at $p_0$ and
\begin{equation}
\distr_{p_0} = \spn\left\lbrace \frac{\partial}{\partial x_1}, \ldots, \frac{\partial}{\partial x_{2n}}\right\rbrace.
\end{equation}
\end{definition}
\begin{example}\label{ex:darboux}
\emph{Darboux's coordinates} on a contact manifolds are local coordinates $(x,z) \in \R^{2n}\times \R$ such that the contact form has the following form:
\begin{equation}
\alpha = -dz + \frac{1}{2} \sum_{i,j=1}^{2n} J_{ij}x_i d x_j, \qquad\text{where}\qquad J = \begin{pmatrix}
0 & \mathbbm{1}_n \\ -\mathbbm{1}_n & 0
\end{pmatrix}.
\end{equation}
In particular, in these coordinates $d\alpha = \sum_{i < j} J_{ij} dx_i\wedge dx_j$. The classical Darboux's theorem states that Darboux's coordinates always exist in a neighbourhood of any point $p_0$. Since $\distr_{p_0} = \ker \alpha|_{p_0} = \spn\{\partial_{x_1},\ldots,\partial_{x_{2n}}\}$, Darboux's coordinates are indeed adapted at $p_0$.
\end{example}


In these coordinates we define ``non-homogeneous dilations'' $\delta_\epsilon : M \to M$ by:
\begin{equation}
\delta_\epsilon(x,z) = (\epsilon x, \epsilon^2 z), \qquad \epsilon > 0,
\end{equation}
and the following family of vector fields:
\begin{equation}
f_i^\epsilon:= \epsilon \delta_{\frac{1}{\epsilon}*} f_i = \hat{f}_i + \epsilon W_i^\epsilon, \qquad \epsilon>0.
\end{equation}
The fields $f_i^\epsilon$ represent the ``blowup'' of the original structure in a neighbourhood of $p_0$ through the dilations $\delta_\epsilon$. The nilpotent approximation is the ``principal part'' of the original structure w.r.t. this non-homogeneous blowup.

\begin{definition} For all $\epsilon >0$, the \emph{$\epsilon$-blowup} is the sub-Riemannian structure $(M,f^\epsilon)$ on $M$ defined by declaring $f_1^\epsilon,\ldots,f_{2n}^\epsilon$ a set of global orthonormal fields.
Likewise, the \emph{nilpotent approximation} (at $p_0$) is the sub-Riemannian structure $(M,\hat{f})$ on $M$ defined by declaring $\hat{f}_1,\ldots,\hat{f}_{2n}$ a set of global orthonormal fields.
\end{definition}
We call $\distr^{\epsilon}$ (resp. $\hat{\distr}$) the distribution of the $\epsilon$-blowup (resp. of the nilpotent structure). 

\begin{proposition}\label{p:tangiscontcarn}
The nilpotent approximation $(M,\hat{f})$ at $p_0$ of a contact manifold is a contact Carnot group, with contact form given by
\begin{equation}
\hat{\alpha} = \lim_{\epsilon\to 0}\frac{1}{\epsilon^2}\delta_\epsilon^* \alpha.
\end{equation}
Let $f_0$ be a vector field transversal to $\distr$ (in the original structure), and let
\begin{equation}
\hat{f}_0:= \lim_{\epsilon \to 0} \epsilon^2 \delta_{\frac{1}{\epsilon}*} f_0.
\end{equation}
Then the Lie algebra $\mathfrak{g} = \mathfrak{g}_1\oplus \mathfrak{g}_2$ of the contact Carnot group $G=(M,\hat{f})$ is
\begin{equation}
\mathfrak{g}_1= \spn\{\hat{f}_1,\ldots,\hat{f}_{2n}\}, \qquad \mathfrak{g}_2 = \spn \{\hat{f}_0\},
\end{equation}
with structural constants given by $A \in \so(2n)$ such that:
\begin{equation}
[\hat{f}_i,\hat{f}_j] = A_{ij} \hat{f}_0, \qquad A_{ij} = -\left.\frac{d\alpha(f_i,f_j)}{\alpha(f_0)}\right|_{p_0}.
\end{equation}
\end{proposition}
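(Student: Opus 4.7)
The plan is to work directly in coordinates and identify everything via Taylor expansion under the anisotropic dilations $\delta_\epsilon$. Choose adapted coordinates $(x,z)\in\R^{2n}\times\R$ centered at $p_0$ (e.g.\ Darboux, as in Example~\ref{ex:darboux}), so that horizontality of $f_i$ at $p_0$ gives
\begin{equation}
f_i = \partial_{x_i} + \sum_j C_{ij}(x,z)\,\partial_{x_j} + B_i(x,z)\,\partial_z,\qquad C_{ij}(0)=B_i(0)=0,
\end{equation}
and similarly $f_0 = \sum_k C_0^k\,\partial_{x_k} + D\,\partial_z$ with $d:=D(0)\neq 0$. Writing also $\alpha = \alpha_0\,dz + \sum_k \alpha_k\,dx_k$, one has $c_0:=\alpha_0(0)\neq 0$ and $\alpha_k(0)=0$ for $k=1,\ldots,2n$.

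First I would pass to the limit in the blown-up frame. Since $\delta_{1/\epsilon*}\partial_{x_i}=\epsilon^{-1}\partial_{x_i}$ and $\delta_{1/\epsilon*}\partial_z=\epsilon^{-2}\partial_z$, direct Taylor expansion of each coefficient at $p_0$ shows that only the lowest-weight term survives, giving
\begin{equation}
\hat f_i = \partial_{x_i} + \sum_{k=1}^{2n} b_{ik}\,x_k\,\partial_z, \qquad \hat f_0 = d\,\partial_z, \qquad b_{ik}:=\partial_{x_k}B_i|_{p_0}.
\end{equation}
A short bracket computation then yields $[\hat f_i,\hat f_j] = (b_{ji}-b_{ij})\partial_z = A_{ij}\hat f_0$ with $A_{ij}:=(b_{ji}-b_{ij})/d$, together with $[\hat f_i,\hat f_0]=[\hat f_0,\hat f_0]=0$. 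This exhibits a step-$2$ nilpotent Lie algebra whose elements span the tangent space at every point of $M$, uniquely determining a simply connected Lie group structure on $M$ for which the $\hat f_i$ and $\hat f_0$ are left-invariant, and for which $\hat f_1,\ldots,\hat f_{2n}$ is orthonormal on $\hat\distr$.

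The next step is to identify $A_{ij}$ with the intrinsic formula. Differentiating the horizontality relation $\alpha(f_i) = \alpha_0 B_i + \alpha_i + \sum_j \alpha_j C_{ij} \equiv 0$ at $p_0$ and dropping all terms with a vanishing factor gives $\partial_{x_k}\alpha_i|_{p_0} = -c_0\,b_{ik}$. Since $f_i|_{p_0}=\partial_{x_i}$, the standard formula $d\alpha(X,Y)=X\alpha(Y)-Y\alpha(X)-\alpha([X,Y])$ evaluated at $p_0$ becomes $d\alpha(f_i,f_j)|_{p_0} = \partial_{x_i}\alpha_j|_{p_0}-\partial_{x_j}\alpha_i|_{p_0} = -c_0(b_{ji}-b_{ij})$. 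Combined with $\alpha(f_0)|_{p_0}=c_0 d$, this gives exactly
\begin{equation}
A_{ij} = -\left.\frac{d\alpha(f_i,f_j)}{\alpha(f_0)}\right|_{p_0}.
\end{equation}
Non-degeneracy of $d\alpha|_\distr$ at $p_0$ forces $A$ to be non-degenerate, so $G=(M,\hat f)$ is a contact Carnot group.

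It remains to identify the contact form. Applying the same Taylor machinery to $\delta_\epsilon^*\alpha$, using $\delta_\epsilon^* dx_k=\epsilon\,dx_k$ and $\delta_\epsilon^* dz=\epsilon^2 dz$, the only contributions surviving at order $\epsilon^2$ come from the constant $\alpha_0(0)\,dz$ and from the linear parts of the $\alpha_k$, yielding
\begin{equation}
\hat\alpha = c_0\,dz + \sum_{k,\ell}\partial_{x_\ell}\alpha_k|_{p_0}\,x_\ell\,dx_k.
\end{equation}
Substituting $\partial_{x_k}\alpha_i|_{p_0}=-c_0 b_{ik}$ immediately yields $\hat\alpha(\hat f_i)\equiv 0$, so $\ker\hat\alpha=\hat\distr$, while evaluation at the origin shows $d\hat\alpha|_{\hat\distr}$ agrees with $d\alpha|_\distr$ at $p_0$ and is therefore non-degenerate, confirming $\hat\alpha$ as a contact form for $G$. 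The main technical difficulty is purely organizational: keeping track of the weights ($1$ for $x$, $2$ for $z$) under the anisotropic dilations and identifying which Taylor terms survive in each limit. Once this bookkeeping is fixed the argument proceeds mechanically.
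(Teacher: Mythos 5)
Your strategy is essentially the paper's: Taylor-expand everything with respect to the anisotropic weights under $\delta_\epsilon$, identify the surviving terms of $\tfrac{1}{\epsilon^2}\delta_\epsilon^*\alpha$ and of the blown-up frame, and use Cartan's formula $d\alpha(X,Y)=X\alpha(Y)-Y\alpha(X)-\alpha([X,Y])$ to match the structural constant $A$ with $-d\alpha(f_i,f_j)/\alpha(f_0)|_{p_0}$. The difference is one of execution: the paper never writes the frame in coordinates, but blows up the structural equations $[f_i,f_j]=\sum_k c_{ij}^kf_k+c_{ij}^0f_0$ to get $[\hat f_i,\hat f_j]=c^0_{ij}(p_0)\hat f_0$ and then applies Cartan's formula to the original frame, whereas you compute $\hat f_i,\hat f_0$ explicitly and bracket them by hand. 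Both are fine, and your verification that $\hat\alpha(\hat f_i)\equiv 0$ and that $d\hat\alpha|_{\hat\distr}$ coincides with $d\alpha|_{\distr_{p_0}}$ is a correct (slightly more explicit) version of the paper's argument.

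The one point you must repair is the very first display: adaptedness of the coordinates does \emph{not} give $f_i=\partial_{x_i}+\sum_jC_{ij}\partial_{x_j}+B_i\partial_z$ with $C_{ij}(0)=0$. Horizontality at $p_0$ only forces the $\partial_z$-component to vanish at the origin; in general $f_i|_{p_0}=\sum_j B_{ji}(0)\,\partial_{x_j}$ with $B(0)$ merely invertible (this is exactly the normal form used in Lemma~\ref{l:linearchange}), and if you insist on Darboux coordinates you cannot also normalize $B(0)=\I$, since the metric on $\distr$ is arbitrary. The gap is harmless but must be closed: either carry the matrix $B(0)$ through (the differentiated horizontality relation becomes $\sum_j\partial_{x_k}\alpha_j(0)B_{ji}(0)=-c_0\,\partial_{x_k}b_i(0)$ and the same cancellations give the intrinsic formula for $A_{ij}$ and $\hat\alpha(\hat f_i)=0$), or note that a constant linear change $x\mapsto Lx$ of the horizontal coordinates preserves adaptedness, commutes with $\delta_\epsilon$, and hence leaves $\hat f_i$, $\hat f_0$ unchanged as vector fields, so one may assume $f_i|_{p_0}=\partial_{x_i}$ without loss of generality. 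With either fix the rest of your computation (limits of the blown-up frame, the bracket, the identification of $A$, and the limit contact form) is correct.
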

\begin{proof}
We first prove that the nilpotent structure is contact. For $\epsilon>0$ let $\alpha^\epsilon:= \tfrac{1}{\epsilon^2}\delta_{\epsilon}^* \alpha$. Indeed $\distr^\epsilon = \ker \alpha^\epsilon$. Let $(x,z)$ be the set of adapted coordinates that define the dilation $\delta_\epsilon$. Then
\begin{equation}
\alpha = \sum_{i=1}^{2n} \xi_i dx_i + w dz,
\end{equation}
for some smooth functions $\xi_i, w : \R^{2n+1} \to \R$, bounded with all their derivatives. Since $\distr_{p_0} = \ker\alpha|_{p_0} = \spn\{\partial_{x_1},\ldots,\partial_{x_{2n}}\}$ in adapted coordinates we have the following Taylor expansions
\begin{equation}\label{eq:Taylor}
\xi_i(x,z) = \sum_{j=1}^{2n} a_{ij} x_j + b z + R_i(x,z), \qquad w(x,z) = w_0 + R_0(x,z).
\end{equation}
where the remainder terms $R_i(x,z)$ (resp. $R_0(x,z)$) are actually bounded by polynomials of degree $\geq 2$ (resp. $\geq 1$) in $(x,z)$. Moreover $a_{ij}$ is non-degenerate since $d\alpha|_{\distr}$ is non-degenerate and $w_0 \neq 0$. A straightforward calculation using the definition of $\delta_\epsilon^*$ gives
\begin{equation}
\alpha^\epsilon = \sum_{i=1}^{2n} \frac{1}{\epsilon}\xi_i(\epsilon x, \epsilon^2 z) dx_i + w(\epsilon x,\epsilon^2 z) dz.
\end{equation}
In particular, using Eq.~\eqref{eq:Taylor}, we notice that $\alpha^\epsilon$ converges uniformly to $\hat{\alpha}$:
\begin{equation}
\hat{\alpha} = \lim_{\epsilon \to 0} \alpha^\epsilon = \sum_{i,j=1}^{2n} a_{ij} x_j dx_i + w_0 dz.
\end{equation}
Indeed $\alpha \wedge (d \alpha)^n = w_0\det(a) \neq 0$, which implies non-degeneracy of the contact form. Moreover, $\ker \hat{\alpha} = \spn\{\hat{f}_1,\ldots,\hat{f}_{2n}\}$. In fact, for all $i=1,\ldots,2n$, we have
\begin{equation}
\hat{\alpha}(\hat{f}_i) = \lim_{\epsilon\to 0} \frac{1}{\epsilon^2} \delta_\epsilon^* \alpha (\epsilon \delta_{\frac{1}{\epsilon}*} f_i) = \lim_{\epsilon\to 0} \frac{1}{\epsilon}\alpha(f_i) = 0.
\end{equation}

Now we show that the nilpotent approximation $(M,\hat{f})$ is a Carnot group. Consider the fields $f_1,\ldots,f_{2n}$ defining the original structure, and any field $f_0$ transversal to $\distr$. Then
\begin{equation}\label{eq:structuraloriginal}
[f_i,f_j] = \sum_{k=1}^{2n} c_{ij}^k f_k + c_{ij}^0 f_0 , \qquad \forall i,j=1,\ldots,2n,
\end{equation}
for some family of smooth functions $c_{ij}^0$ and $c_{ij}^k$. Now consider the blowup of Eq.~\eqref{eq:structuraloriginal}, namely we act on both sides with $\epsilon^2 \delta_{1/\epsilon*}$, and we take the limit for $\epsilon \to 0$. The first term on the r.h.s. vanishes in the limit (due to the factor $\epsilon^2$), and we obtain
\begin{equation}\label{eq:structuralnilpotent}
[\hat{f}_i,\hat{f}_j] = A_{ij} \hat{f}_0.
\end{equation}
where $A_{ij} := c_{ij}^0(p_0)$ is a constant skew-symmetric matrix. Analogously, one can check that 
\begin{equation}
[\hat{f}_i,\hat{f}_0] = [\hat{f}_0,\hat{f}_0] = 0, \qquad \forall i=1,\ldots,2n.
\end{equation}
Thus the fields $\hat{f}_1,\ldots,\hat{f}_{2n}$ and $\hat{f}_0$ define a graded, nilpotent Lie algebra $\mathfrak{g} = \mathfrak{g}_1 \oplus \mathfrak{g}_2$ with
\begin{equation}
\mathfrak{g}_1:= \spn\{\hat{f}_1,\ldots,\hat{f}_{2n}\}, \qquad \mathfrak{g}_2 = \spn \{\hat{f}_0\}.
\end{equation}
Since $M=\R^{2n+1}$ is simply connected and the Lie algebra of vector fields $\mathfrak{g}$ is nilpotent, there exists a unique group structure on $M$ such that $\mathfrak{g}$ is its Lie algebra of left-invariant vector fields. The definition of the product law can be written explicitly in exponential coordinates on $G$ induced by the fields $\hat{f}_1,\ldots,\hat{f}_{2n}$, $\hat{f}_0$ through the Backer-Campbell-Hausdorff formula and is left to the reader. Thus $G:=(M,\hat{f})$ has the structure of a contact Carnot group. Finally, 
\begin{equation}
d\alpha(f_i,f_j) = f_i(\alpha(f_j))  - f_j(\alpha(f_i)) - \alpha([f_i,f_j]) = - c_{ij}^0 \alpha(f_0).
\end{equation}
Using the relation $A_{ij} = c_{ij}^0(p_0)$, it is sufficient to evaluate the above formula at $p_0$ to obtain
\begin{equation}
A_{ij} = -\left.\frac{d\alpha(f_i,f_j)}{\alpha(f_0)}\right|_{p_0}.
\end{equation}
Indeed $A$ is not degenerate, as a consequence of the non-degeneracy assumption on $d\alpha|_{\distr}$.
\end{proof}
	
\subsection{Adapted vs exponential coordinates} \label{s:adaptedvsexp}

Recall that, at the beginning of this section we put adapted coordinates $(x,z) \in \R^{2n}\times \R$  on $M$. This choice defined the family of non-homogeneous dilations $\delta_\epsilon$ that, in turn defined the nilpotent approximation $(M,\hat{f})$ as the ``limit'' of the $\epsilon$-blowup structures. Any choice of a global orthonormal frame $f_i$ and $f_0$ transverse to $\distr$ for the original structure induces a global orthonormal frame $\hat{f}_i$ and $\hat{f}_0$ (transverse to $\hat{\distr}$) for the nilpotent approximation, where
\begin{equation}
\hat{f}_i = \lim_{\epsilon\to 0} \epsilon \delta_{\frac{1}{\epsilon}*} f_i, \qquad \hat{f}_0 = \lim_{\epsilon\to 0} \epsilon^2 \delta_{\frac{1}{\epsilon}*} f_0.
\end{equation}
Since $G=(M,\hat{f})$ is a contact Carnot group, the fields $\hat{f}_1,\ldots,\hat{f}_{2n}$ and $\hat{f}_0$ induce exponential coordinates $(\theta,\rho) \in \R^{2n}\times\R$. Namely a point has coordinates $(\theta,\rho)$ if and only if
\begin{equation}
(x,z) = \mathrm{exp}_G\left(\sum_{i=1}^{2n} \theta_i\hat{f}_i + \rho \hat{f}_0\right).
\end{equation}
The next lemma clarifies the relation between adapted coordinates $(x,z)$ and exponential coordinates $(\theta,\rho)$ on the same base space $M=\R^{2n+1}$.

\begin{figure}
\centering
\psscalebox{0.9} 
{
\begin{pspicture}(0,-1.4)(16,0.8)
\footnotesize
\rput[l](0,0){\rnode{A}{\psframebox{\begin{minipage}[c]{9em}\center  adapted coordinates\\  $(x,z) \in \R^{2n}\times \R$ \end{minipage}}}}
\rput[l](4,0){\rnode{B}{\psframebox{\begin{minipage}[c]{9em}\center  dilations $\delta_\epsilon$ \\  $\delta_\epsilon(x,z) =(\epsilon x,\epsilon^2z)$ \end{minipage}}}}
\rput[l](8,0){\rnode{C}{\psframebox{\begin{minipage}[c]{11em}\center  nilpotent approximation \\  $\hat{f} = \displaystyle\lim_{\epsilon\to 0} \epsilon \delta_{\frac{1}{\epsilon}*} f$ \end{minipage}}}}
\rput[r](16,0){\rnode{D}{\psframebox{\begin{minipage}[c]{11em}\center  exponential coordinates\\  $(\theta,\rho) \in \R^{2n}\times \R$ \end{minipage}}}}
\ncline[nodesep=2pt]{->}{A}{B}
\ncline[nodesep=2pt]{->}{B}{C}
\ncline[nodesep=2pt]{->}{C}{D}
\ncbar[nodesep=2pt,angle=-90]{->}{D}{A}
\naput{change of coordinates $(x,z) = (B\theta, \theta^*S\theta + c\rho)$}
\end{pspicture}
}
\caption{Adapted coordinates on $M$ define the dilation map $\delta_\epsilon$ that, in turn, defines the nilpotent approximation $(M,\hat{f})$.}\label{fig:implications}
\end{figure}
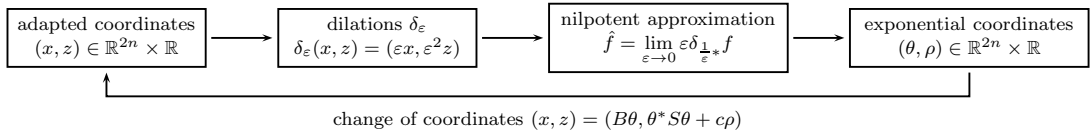

\begin{lemma}\label{l:linearchange}
Let $(x,z) \in \R^{2n}\times \R$ be adapted coordinates for the contact structure $(\R^{2n+1},f)$, and let $(\theta,\rho) \in \R^{2n} \times \R$ be exponential coordinates for the Carnot structure $(\R^{2n+1}, \hat{f})$, induced by some choice of $f_i,f_0$ (and consequently $\hat{f}_i,\hat{f}_0$). Then the two sets of coordinates are related by
\begin{equation}
x = B\theta, \qquad z = \theta^* S\theta + c \rho,
\end{equation}
where $B \in \mathrm{GL}(2n)$, $c \in \R\setminus\{0\}$ and $S \in \mathrm{Mat}(2n)$.
\end{lemma}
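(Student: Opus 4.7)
The plan is to compute everything directly in the adapted coordinates $(x,z)$ by first writing down explicit formulas for the nilpotent frame $\hat{f}_i,\hat{f}_0$ and then integrating the vector field that defines the exponential map.

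First I would expand $f_i$ and $f_0$ in adapted coordinates as
\begin{equation}
f_i = \sum_{j=1}^{2n} a_{ij}(x,z) \partial_{x_j} + b_i(x,z) \partial_z, \qquad f_0 = \sum_{j=1}^{2n} a_{0j}(x,z)\partial_{x_j} + b_0(x,z) \partial_z,
\end{equation}
and use the two structural hypotheses: (i) since $(x,z)$ is adapted, $\distr_{p_0} = \spn\{\partial_{x_1},\ldots,\partial_{x_{2n}}\}$, which forces $b_i(0) = 0$ for $i=1,\ldots,2n$ and the matrix $a_{ij}(0)$ to be invertible; (ii) since $f_0$ is transverse to $\distr$ at $p_0$, we have $b_0(0)\neq 0$. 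Writing $f_i$ and $f_0$ in Taylor expansion around the origin, and applying the definitions $\hat{f}_i = \lim_{\epsilon\to 0}\epsilon\,\delta_{1/\epsilon *} f_i$ and $\hat{f}_0 = \lim_{\epsilon\to 0}\epsilon^2 \delta_{1/\epsilon *} f_0$ (after a direct computation of $d\delta_{1/\epsilon}$ on the basis fields) leads to explicit closed forms
\begin{equation}
\hat{f}_i = \sum_{j=1}^{2n} B_{ji} \partial_{x_j} + L_i(x)\partial_z, \qquad \hat{f}_0 = c\, \partial_z,
\end{equation}
where $B \in \mathrm{GL}(2n)$ is the matrix with entries $B_{ji}=a_{ij}(0)$, $c = b_0(0) \neq 0$, and $L_i(x) = \sum_j \partial_{x_j} b_i(0)\, x_j$ is a linear function of $x$ (the $\epsilon\to 0$ limit kills higher-order remainders, while the factor $\frac{1}{\epsilon}$ in front of $b_i(\epsilon x,\epsilon^2 z)$ extracts precisely the linear part, since $b_i(0)=0$).

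Next I would compute $\exp_G$ by integrating the left-invariant vector field $V := \sum_i \theta_i \hat{f}_i + \rho \hat{f}_0$ from the origin for unit time. In adapted coordinates,
\begin{equation}
V = \sum_{j=1}^{2n} (B\theta)_j \partial_{x_j} + \bigl(L_\theta(x) + c\rho\bigr)\partial_z,\qquad L_\theta(x) := \sum_i \theta_i L_i(x).
\end{equation}
The $x$-component has constant coefficients, so the integral curve starting at $0$ satisfies $x(t) = tB\theta$, hence $x(1) = B\theta$. Substituting into the equation for $z$ gives $\dot{z}(t) = t\,L_\theta(B\theta) + c\rho$ (using linearity of $L_\theta$), which integrates to $z(1) = \tfrac{1}{2} L_\theta(B\theta) + c\rho$. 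Since $\theta \mapsto L_\theta(B\theta)$ is a quadratic form in $\theta$, we can write it as $\theta^* (CB)\theta$ where $C_{ij}=\partial_{x_j} b_i(0)$, and setting $S := \tfrac{1}{2}CB$ yields exactly
\begin{equation}
x = B\theta, \qquad z = \theta^* S\theta + c\rho,
\end{equation}
with $B$ invertible and $c\neq 0$, as claimed.

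The only point that requires a little care is the interchange of the limit defining $\hat{f}_i$ with the evaluation at a generic point $(x,z)$; this is the ``main obstacle'' and is handled by the bounded-with-derivatives hypothesis on the $f_i$, which guarantees that the Taylor remainders vanish uniformly on compact sets as $\epsilon\to 0$. All other steps are routine computations in coordinates.
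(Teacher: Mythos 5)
Your proposal is correct and follows essentially the same route as the paper: expand $f_i,f_0$ in adapted coordinates, use $b_i(0)=0$, invertibility of the horizontal part and $c(0)\neq 0$ to compute the blow-up limits $\hat{f}_i = \sum_j B_{ji}\partial_{x_j} + L_i(x)\partial_z$ and $\hat{f}_0 = c\,\partial_z$, then integrate $\sum_i\theta_i\hat{f}_i+\rho\hat{f}_0$ for unit time to get $x=B\theta$, $z=\theta^*S\theta+c\rho$ with $S=\tfrac12 CB$, which is exactly the paper's formula for $S_{ij}$ (the paper merely delegates the final integration to the proof of Lemma~\ref{l:expcoords}).
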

\begin{proof}
For $i=1,\ldots,2n$ we have, in adapted coordinates:
\begin{equation}
f_i = \sum_{j=1}^{2n} B_{ji}(x,z)\frac{\partial}{\partial x_j} + b_i(x,z) \frac{\partial}{\partial z}, \qquad
f_0 = \sum_{j=1}^{2n} C_{j}(x,z)\frac{\partial}{\partial x_j} + c(x,z) \frac{\partial}{\partial z},
\end{equation}
for some smooth functions $B_{ij},b_i,C_{j},c:\R^{2n+1} \to \R$ that satisfy:
\begin{equation}\label{eq:properties}
b_i(0,0) = 0, \qquad \det B_{ij}(0,0) \neq 0, \qquad c(0,0) \neq 0.
\end{equation}
By explicit computation we obtain
\begin{equation}
\hat{f}_i = \sum_{j=1}^{2n} B_{ji}(0,0) \frac{\partial}{\partial x_j} + \sum_{j=1}^{2n}\frac{\partial b_i}{\partial x_j}(0,0) x_j\frac{\partial}{\partial z}, \qquad \hat{f}_0 = c(0,0) \frac{\partial}{\partial z}.
\end{equation}
By definition of exponential coordinates (see the proof of Lemma~\ref{l:expcoords}) we obtain that 
\begin{equation}
x = B \theta, \qquad \text{and} \qquad z = \theta^* S\theta + c \rho,
\end{equation}
where $B$ is the matrix with components $B_{ij}(0,0)$, $c = c(0,0)$ and the matrix $S$ has components
\begin{equation}
S_{ij} = \frac{1}{2}\sum_{\ell=1}^{2n}\frac{\partial b_i}{\partial x_\ell}(0,0) B_{\ell j}(0,0), \qquad B \in \mathrm{GL}(2n)\qquad \text{by \eqref{eq:properties}}. \qedhere
\end{equation}
\end{proof}

The following proposition compares the geometry of the original structure with the $\epsilon$-blowup and is left to the reader.

\begin{proposition}\label{p:blowup}
The composition $\gamma \mapsto \gamma_\epsilon=\delta_{\frac{1}{\epsilon}}\gamma$ gives a homeomorphism between the set of admissible curves for $(M, f)$ and admissible curves for $(M, f^\epsilon).$ If $\gamma(0)=0, \gamma(1)=p$ and $\gamma$ is a geodesic for $(M, f)$, then $\gamma_\epsilon$ is a geodesic for $(M, f^\epsilon)$ with $\gamma_\epsilon(0)=0,\gamma_\epsilon(1)=\delta_\frac{1}{\epsilon}(p)$; the energies of these curves are related by $J_\epsilon(\gamma_\epsilon)=\epsilon^{-2}J(\gamma)$.
\end{proposition}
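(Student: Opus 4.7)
\medskip

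\textbf{Proof proposal.} The plan is to first verify that $\gamma \mapsto \gamma_\epsilon := \delta_{1/\epsilon}\circ \gamma$ intertwines horizontality for the two structures, then compute the energy, and finally deduce the statement about geodesics by a purely variational argument.

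First I would check admissibility. If $\gamma : [0,1]\to M$ is admissible for $(M,f)$, there exist $u_1,\dots,u_{2n}\in L^\infty([0,1])$ such that $\dot\gamma(t) = \sum_i u_i(t) f_i(\gamma(t))$ a.e.. Applying $(\delta_{1/\epsilon})_*$ pointwise and using that $\delta_{1/\epsilon}$ is a smooth diffeomorphism of $M$, one obtains
\begin{equation}
\dot\gamma_\epsilon(t) = (d\delta_{1/\epsilon})_{\gamma(t)} \dot\gamma(t) = \sum_{i=1}^{2n} u_i(t)\, \bigl((\delta_{1/\epsilon})_* f_i\bigr)(\gamma_\epsilon(t)) = \sum_{i=1}^{2n} \frac{u_i(t)}{\epsilon}\, f_i^\epsilon(\gamma_\epsilon(t)),
\end{equation}
where in the last equality we used the definition $f_i^\epsilon = \epsilon (\delta_{1/\epsilon})_* f_i$. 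Thus $\gamma_\epsilon$ is admissible for $(M,f^\epsilon)$ with controls $v_i = u_i/\epsilon$. The same computation with $\epsilon$ replaced by $1/\epsilon$ shows that the inverse map $\gamma_\epsilon \mapsto \delta_\epsilon \circ \gamma_\epsilon$ sends admissible curves for $(M,f^\epsilon)$ to admissible curves for $(M,f)$. Both maps are continuous with respect to the $W^{1,\infty}$-topology since $\delta_\epsilon$ and $\delta_{1/\epsilon}$ are diffeomorphisms whose differentials are bounded together with their inverses on bounded sets. Hence $\gamma \mapsto \gamma_\epsilon$ is a homeomorphism between the respective spaces of admissible curves.

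For the energies, a direct substitution using $v_i = u_i/\epsilon$ yields
\begin{equation}
J_\epsilon(\gamma_\epsilon) = \frac{1}{2}\int_0^1 \sum_{i=1}^{2n} v_i(t)^2\, dt = \frac{1}{2\epsilon^2}\int_0^1 \sum_{i=1}^{2n} u_i(t)^2\, dt = \epsilon^{-2} J(\gamma).
\end{equation}
Furthermore, $\gamma_\epsilon(0) = \delta_{1/\epsilon}(\gamma(0)) = \delta_{1/\epsilon}(0) = 0$ and $\gamma_\epsilon(1) = \delta_{1/\epsilon}(p)$, so the homeomorphism restricts to a homeomorphism between admissible curves from $0$ to $p$ (for the original structure) and admissible curves from $0$ to $\delta_{1/\epsilon}(p)$ (for the $\epsilon$-blowup).

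Finally, since geodesics are exactly the critical points of the energy functional restricted to the space of admissible curves with fixed endpoints, and the map $\gamma \mapsto \gamma_\epsilon$ is a homeomorphism of these constrained spaces under which $J_\epsilon = \epsilon^{-2} J$ (a strictly positive constant multiple, independent of $\gamma$), critical points are sent to critical points. Therefore $\gamma$ is a geodesic for $(M,f)$ joining $0$ to $p$ if and only if $\gamma_\epsilon$ is a geodesic for $(M,f^\epsilon)$ joining $0$ to $\delta_{1/\epsilon}(p)$, with energies related by the stated formula. The only mildly delicate point is the continuity of the correspondence in the $W^{1,\infty}$-topology, which however reduces at once to standard bounds on $\delta_\epsilon$ and $d\delta_\epsilon$ on compact subsets of $M$.
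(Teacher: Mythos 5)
Your proposal is correct, and in fact the paper offers no proof to compare it with: Proposition~\ref{p:blowup} is explicitly ``left to the reader'', so your computation fills that gap in exactly the expected way. The verification that $\dot\gamma_\epsilon=\sum_i (u_i/\epsilon)\,f_i^\epsilon(\gamma_\epsilon)$, the resulting control rescaling $v_i=u_i/\epsilon$, the energy identity $J_\epsilon(\gamma_\epsilon)=\epsilon^{-2}J(\gamma)$, and the endpoint bookkeeping are all as they should be, and continuity in $W^{1,\infty}$ in both directions is immediate because $\delta_{1/\epsilon}$ is linear in the adapted coordinates, hence globally Lipschitz with Lipschitz inverse.

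One small refinement in the last step: a homeomorphism of the constrained curve spaces does not by itself send critical points to critical points, so the phrase ``critical points are sent to critical points'' needs either of the following justifications. Either observe that in the control parametrization the correspondence is the linear isomorphism $u\mapsto u/\epsilon$, which intertwines the two endpoint maps and rescales the energy by the constant $\epsilon^{-2}>0$, hence carries constrained critical points to constrained critical points; or, more in line with Section~\ref{s:prel} of the paper, use the definition of geodesic as a local energy minimizer: since $\delta_{1/\epsilon}(\gamma|_I)=(\delta_{1/\epsilon}\gamma)|_I$ for every subinterval $I$, and your correspondence is a bijection between admissible curves with matching endpoints that multiplies the energy by a fixed positive constant, local minimizers correspond to local minimizers with no smoothness of the correspondence required. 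With either fix the argument is complete.
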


\subsection{Semicontinuity of the counting function}
Let $E_\epsilon, \hat{E}:T^*_{p_0} M\to M$ be, respectively, the sub-Riemannian exponential maps for $(M, f^{\epsilon})$ and $(M, \hat{f})$. We define now the \emph{counting functions} $\nu_{\epsilon},\hat{\nu}:M\to (0, \infty]$ as:
\begin{equation}
\nu_\epsilon(p)=\# E_\epsilon^{-1}(p)\qquad \textrm{and}\qquad  \hat{\nu}(p)=\#\hat{E}^{-1}(p).
\end{equation}
In other words, $\nu_\epsilon(p)$ counts the number of geodesics between $0$ and $p$ for the $\epsilon$-blowup and $\hat{\nu}(p)$ for the limit Carnot group. Setting $\nu=\nu_1$ (the counting function for the original structure $(M, f)$), we notice that Prop. \ref{p:blowup} implies indeed:
\begin{equation}
\nu_\epsilon(p)=\nu(\delta_\epsilon(p)).
\end{equation}
In fact given a geodesic $\gamma:I\to M$  for $(M, f)$ between $0$ and $\delta_\epsilon(p)$, then $\delta_{\frac{1}{\epsilon}}\gamma$ is a geodesic for $(M, f^\epsilon)$ with final point $\delta_{\frac{1}{\epsilon}}(\gamma(1))=\delta_{\frac{1}{\epsilon}}(\delta_\epsilon(p))=p$ (and vice-versa). The next theorem compares the asymptotics of $\nu(\delta_{\epsilon}(p))$ with the one of $\hat{\nu}(p)$.
\begin{theorem}[Counting in the limit]\label{t:limit}
Let $M$ be a contact sub-Rieman\-nian manifold. For the generic $p \in M$ sufficiently close to $p_0$:
\begin{equation}
\hat{\nu}(p)\leq \liminf_{\epsilon \to 0}\nu(\delta_\epsilon(p)).
\end{equation}
where $\delta_\epsilon$ is the non-homogeneous dilation defined in some set of adapted coordinates in a neighbourhood of $p_0$.
\end{theorem}
\begin{proof}
If $p$ is a regular value of $\hat{E}$, then the fiber if $\hat{E}^{-1}(p)$ is discrete, hence $\hat{\nu}(p)$ is finite by Thm. \ref{thm:disjoint}.
Consider an open bounded set $U\subset T_0^*M$, where bounded means that it is contained in a compact set $K$, such that:
\begin{equation}
\hat{E}^{-1}(p)\subset U\subset K.
\end{equation}
We claim that there exists $\epsilon_K>0$ such that $p$ is a regular value of $E_\epsilon|_{U}$ for every $\epsilon<\epsilon_K.$ If this was not true, then we can find a sequence $\{\epsilon_n\}_{n\in \mathbb{N}}$ converging to zero and a sequence $\{\lambda_n\}_{n\in \mathbb{N}}\subset K$ such that $E_{\epsilon_n}(\lambda_n)=p$ and $\rank (d_{\lambda_n}E_{\epsilon_n})<\dim (M)$. Then, by compactness of $K$, up to subsequences we can assume $\lambda_n\to \hat{\lambda}$ with
$\hat{E}(\hat{\lambda})=p$, by uniform convergence of $E_{\epsilon_n}|_K$ to $\hat{E}|_K$ with all derivatives (see \cite[Prop. 5.15]{ABR}). Moreover, by the same argument, $d_{\lambda_n}E_{\epsilon_n}\to d_{\hat{\lambda}}\hat{E} $ and since the set of points where the rank of $d\hat{E}$ is not maximal is closed, we also have $\rank (d_{\hat{\lambda}}\hat{E})<\dim (M)$, which contradicts the fact that $p$ was a regular value of $\hat{E}.$

Consider now the function $\bar{E}:\overline{U}\to M $ (where $\overline{U}=U\times[0, \epsilon_K]$) given by $(u, \epsilon)\mapsto E_\epsilon(u)$ (where we have set $E_0=\hat{E}$); the uniform convergence of $E_\epsilon$ with all derivatives on compact sets implies $\bar{E}$ is smooth (in fact $C^1$ is enough for us). 
By the above observation $\bar{X}=\bar{E}^{-1}(p)$ is a smooth submanifold of $\overline{U}$ and its dimension is one.  In fact: 
\begin{equation}
(d_{(u, \epsilon)}\bar{E})(\dot{u}, \dot{\epsilon})=(d_{u}E_{\epsilon})\dot{u}+\frac{\partial \bar{E}}{\partial \epsilon}(u, \epsilon)\dot{\epsilon}, \qquad (\dot u, \dot\epsilon) \in T_{(u,\epsilon)} \overline{U}.
\end{equation}
Since $p$ is a regular value of $E_\epsilon$ for all $\epsilon \in [0, \epsilon_K]$, the image of $d_{u}E_{\epsilon}$ is enough to generate $T_pM$.

On the other hand, we claim that \emph{zero} is a regular value for the the projection $\pi:\bar{X}\to [0, \epsilon_K]$ on the second factor. To prove this, observe that tangent space to $\bar{X}$ at $(u, 0)$ is:
\begin{equation}
T_{(u, 0)}\bar{X}=\left\{(\dot{u}, \dot{\epsilon})\,\big|\, (d_{u}\hat{E})\dot{u}+\frac{\partial \bar{E}}{\partial \epsilon}(u, 0)\dot{\epsilon}=0\right\},
\end{equation}
and since $\hat{E}$ a submersion at $p$:
\begin{equation}
T_{(u, 0)}\bar{X}\cap \ker{d\pi}\simeq T_u\hat{E}^{-1}(p)=\{0\}.
\end{equation}
Thus $T_{(u, 0)}\bar{X}$ must contain some vector $(\dot{u}, \dot{\epsilon})$ with $\dot\epsilon\neq 0$, i.e. zero is not critical for $\pi$, proving the claim. Then $\epsilon'>0$ small enough also is noncritical for $\pi$; in particular, by Ehresmann's theorem, $\pi|_{\pi^{-1}[0, \epsilon']}$ is a fibration ($U$ is contained in a compact set) and:
\begin{equation}
\forall \epsilon<\epsilon':\quad E_\epsilon|_U^{-1}(p)\simeq \hat{E}|_U^{-1}(p).
\end{equation}
Since $\nu_\epsilon(p)\geq \#  E_\epsilon|_U^{-1}(p)$ the conclusion follows (see Fig. \ref{fig:geo}).
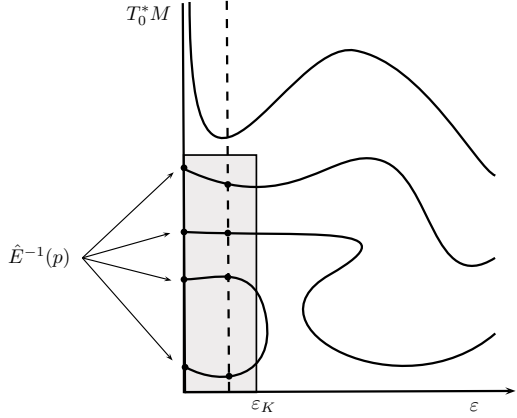
\begin{figure}
\scalebox{0.75} 
{
\begin{pspicture}(0,-3.7288477)(9.82291,3.7088478)
\definecolor{color689b}{rgb}{0.9333333333333333,0.9254901960784314,0.9254901960784314}
\psframe[linewidth=0.03,dimen=outer,fillstyle=solid,fillcolor=color689b](5.1810155,0.96884763)(3.8610156,-3.2711523)
\psline[linewidth=0.04cm](3.9010155,-3.2711523)(3.8610156,3.6488476)
\psline[linewidth=0.04cm,arrowsize=0.05291667cm 2.0,arrowlength=1.4,arrowinset=0.4]{->}(3.9010155,-3.2511523)(9.761016,-3.2311523)
\psbezier[linewidth=0.04](3.9010155,-1.2511524)(4.4210157,-1.2511524)(5.1793227,-0.92762464)(5.341016,-1.9511523)(5.5027084,-2.97468)(4.590543,-3.179227)(3.9155352,-2.8111525)
\psbezier[linewidth=0.04](3.8810155,-0.41115233)(6.0410156,-0.5111523)(8.061016,-0.25115234)(6.4610157,-1.1711524)(4.861016,-2.0911524)(7.7410154,-3.6311524)(9.401015,-2.2111523)
\psbezier[linewidth=0.04](3.8810155,0.70884764)(5.818831,-0.21115234)(6.519106,1.1660937)(7.402622,0.8488476)(8.286139,0.5316016)(8.281015,-1.5711523)(9.396336,-0.87115234)
\psbezier[linewidth=0.04](3.9810157,3.6688476)(3.9810157,-1.3311523)(5.7088614,3.0217755)(6.9101825,2.8088477)(8.111504,2.5959198)(9.078667,0.70884764)(9.401015,0.58884764)
\psdots[dotsize=0.12](3.8810155,0.7288477)
\psdots[dotsize=0.12](3.8810155,-0.41115233)
\psdots[dotsize=0.12](3.8810155,-1.2511524)
\psdots[dotsize=0.12](3.9010155,-2.8111525)
\psline[linewidth=0.04cm,linestyle=dashed,dash=0.16cm 0.16cm](4.6810155,-3.2311523)(4.6410155,3.6888475)
\psdots[dotsize=0.12](4.6610155,0.42884767)
\psdots[dotsize=0.12](4.6610155,-0.43115234)
\psdots[dotsize=0.12](4.6810155,-2.9711523)
\usefont{T1}{ptm}{m}{n}
\rput(9.032471,-3.5061524){$\epsilon$}
\usefont{T1}{ptm}{m}{n}
\rput(5.292471,-3.5061524){$\epsilon_K$}
\usefont{T1}{ptm}{m}{n}
\rput(3.2624707,3.4338477){$T_0^*M$}
\psline[linewidth=0.02cm,arrowsize=0.05291667cm 2.0,arrowlength=1.4,arrowinset=0.4]{<-}(3.7010157,0.64884764)(2.0810156,-0.87115234)
\psline[linewidth=0.02cm,arrowsize=0.05291667cm 2.0,arrowlength=1.4,arrowinset=0.4]{<-}(3.6410155,-0.43115234)(2.0810156,-0.87115234)
\psline[linewidth=0.02cm,arrowsize=0.05291667cm 2.0,arrowlength=1.4,arrowinset=0.4]{<-}(3.6610155,-1.2711524)(2.0810156,-0.87115234)
\psline[linewidth=0.02cm,arrowsize=0.05291667cm 2.0,arrowlength=1.4,arrowinset=0.4]{<-}(3.6810157,-2.6911523)(2.0810156,-0.87115234)
\usefont{T1}{ptm}{m}{n}
\rput(1.3224707,-0.84615237){$\hat{E}^{-1}(p)$}
\psdots[dotsize=0.12](4.6610155,-1.2111523)
\end{pspicture} 
}
\caption{Picture of $\bigcup_{\epsilon\in I} E_\epsilon^{-1}(p)\subset I\times T_0^*M$. Even if $\epsilon_K$ is small, some geodesics can still ``escape'' out of $K$. The shaded region denotes $[0,\epsilon_K]\times K$.}
\label{fig:geo}
\end{figure}
\end{proof}


%

\begin{theorem}\label{t:darboux}
Let $M$ be a contact manifold and $(x,z)$ be Darboux's coordinates on a neighbourhood $U$ of $q \in M$. There exist constants $C(q), R(q)$ such that, for the generic $p=(x,z)\in U$:
\begin{equation}
\liminf_{\epsilon \to 0}\nu(\delta_\epsilon(p)) \geq C(q)\frac{|z|\phantom{^2}}{\|x\|^2} + R(q).
\end{equation}
\end{theorem}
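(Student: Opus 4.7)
The plan is to combine Thm.~\ref{t:limit} with the infinitesimal lower bound of Thm.~\ref{thm:lower} applied to the nilpotent approximation $G_q=(M,\hat f)$ at $q$, and then to convert the resulting estimate from the exponential coordinates of $G_q$ to the Darboux coordinates $(x,z)$ via Lemma~\ref{l:linearchange}. More precisely, first I would invoke Thm.~\ref{t:limit} to obtain, for the generic $p=(x,z)\in U$ sufficiently close to $q$,
\begin{equation}
\lim_{\epsilon\to 0}\nu(\delta_\epsilon(p))\ge\hat\nu(p),
\end{equation}
where $\delta_\epsilon$ is the non-homogeneous dilation associated to the Darboux coordinates.

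Next, Lemma~\ref{l:linearchange}, applied to the orthonormal frame used to construct the nilpotent approximation at $q$, provides $B\in\mathrm{GL}(2n)$, $S\in\mathrm{Mat}(2n)$ and $c\in\R\setminus\{0\}$ (all depending only on the original structure at $q$) such that the exponential coordinates $(\theta,\rho)$ on $G_q$ are related to $(x,z)$ by $\theta=B^{-1}x$ and $\rho=c^{-1}(z-x^{*}Tx)$, with $T:=(B^{-1})^{*}SB^{-1}$. Provided the projections of $\theta$ onto the real invariant subspaces of the structural matrix $A=A(q)$ (see Prop.~\ref{p:tangiscontcarn}) are all nonzero, Thm.~\ref{thm:lower} applied on $G_q$ yields constants $C_1=C_1(q)$ and $R_1=R_1(q)$, depending only on the singular values of $A(q)$, such that
\begin{equation}
\hat\nu(p)\ge C_1\,\frac{|\rho|}{\|\theta\|^{2}}+R_1.
\end{equation}

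Then I would translate this into the $(x,z)$ variables by means of the elementary inequalities
\begin{equation}
\|\theta\|^{2}\le\|B^{-1}\|^{2}\|x\|^{2},\qquad \|x\|^{2}\le\|B\|^{2}\|\theta\|^{2},\qquad |\rho|\ge\frac{|z|-\|T\|\,\|x\|^{2}}{|c|},
\end{equation}
which combine to give
\begin{equation}
\frac{|\rho|}{\|\theta\|^{2}}\ge\frac{1}{|c|\,\|B^{-1}\|^{2}}\,\frac{|z|}{\|x\|^{2}}-\frac{\|T\|\,\|B\|^{2}}{|c|}.
\end{equation}
Setting $C(q):=C_1(q)/(|c|\,\|B^{-1}\|^{2})$ and $R(q):=R_1(q)-C_1(q)\,\|T\|\,\|B\|^{2}/|c|$ completes the argument.

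The only mildly delicate point is to verify that the genericity hypotheses of Thm.~\ref{t:limit} and Thm.~\ref{thm:lower} are simultaneously satisfied on an open dense subset of $U$: the first requires $p$ to be a regular value of $\hat E$ (open dense by Sard applied to the smooth map $\hat E$); the second requires that $\theta=B^{-1}x$ has nonzero projection onto each eigenspace of $A(q)$, which, since $B$ is a linear isomorphism, is the complement of a finite union of proper linear subspaces in $x$, hence open dense as well. The intersection of these two open dense subsets is the generic set on which the estimate holds.
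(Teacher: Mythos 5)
Your proposal is correct and follows essentially the same route as the paper: Thm.~\ref{t:limit} to pass from $\nu(\delta_\epsilon(p))$ to $\hat\nu(p)$, Thm.~\ref{thm:lower} applied to the nilpotent approximation at $q$ (whose Carnot structure and matrix $A(q)$ come from Prop.~\ref{p:tangiscontcarn}), and Lemma~\ref{l:linearchange} to transfer the bound from exponential to Darboux coordinates. The only difference is that you spell out the elementary inequalities converting $|\rho|/\|\theta\|^2$ into $|z|/\|x\|^2$ and the genericity conditions (regular value of $\hat E$ and nonvanishing projections of $\theta$), which the paper leaves implicit.
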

\begin{proof}
We consider on $U$ the original structure $(U,f)$ and the nilpotent structure $(U,\hat{f})$ defined in adapted (e.g. Darboux's) coordinates (see Fig.~\ref{fig:implications}). The classical Sard theorem implies that the generic $p \in U$ is a regular value for $\hat{E}:T_{q}^*U \to U$. Then, by Thm.~\ref{t:limit},
\begin{equation}
\liminf_{\epsilon \to 0}\nu(\delta_\epsilon(p)) \geq \hat{\nu}(p).
\end{equation}
Now choose some orthogonal local frame $f_1,\ldots,f_{2n}$ and $f_0$ transversal to $\distr$ for the original structure. This induces exponential coordinates $(\theta,\rho)$ on $U$ (see Sec.~\ref{s:adaptedvsexp}). By Prop.~\ref{p:tangiscontcarn}, the nilpotent structure $(U,\hat{f})$ is a contact Carnot group such that
\begin{equation}
[\hat{f}_i,\hat{f}_j] = A_{ij} \hat{f}_0, \qquad A_{ij} = \left.\frac{d\alpha(f_j,f_i)}{\alpha(f_0)}\right|_{q}.
\end{equation}
The generic point $p$ has exponential coordinates $(\theta,\rho)$ with all $\theta_j\neq 0$. Then, by Thm.~\ref{thm:lower} we have
\begin{equation}
\hat{\nu}(p) \geq C_1\frac{|\rho|\phantom{^2}}{\|\theta\|^2}+R_1,
\end{equation}
where $C_1=C_1(q)$ and $R_1=R_1(q)$ are computed in the proof of Thm.~\ref{thm:lower} in terms of the singular values of $A$. Indeed they depend on the point $q$ at which we consider the nilpotentization. Darboux's (adapted) coordinates $(x,z)$ and exponential coordinates $(\theta,\rho)$ are related by the transformation of Lemma~\ref{l:linearchange} and we obtain the result.
\end{proof}


\begin{theorem}\label{t:local}
Let $M$ be a contact sub-Riemannian manifold and $q\in M$. Then there exists a sequence $\{q_m\}_{m\in \N}$ in $M$ such that:
\begin{equation}
\lim_{m\to \infty}q_m= q\qquad \text{and}\qquad \lim_{m\to \infty}\nu(q_m)=\infty.
\end{equation}
\end{theorem}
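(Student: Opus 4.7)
The plan is to combine Theorem~\ref{t:darboux} with a diagonal extraction, exploiting the fact that the non-homogeneous dilations contract to the origin, $\delta_\epsilon(p)\to q$ as $\epsilon\to 0$. First I would fix Darboux coordinates $(x,z)\in\R^{2n}\times\R$ on a neighbourhood $U$ of $q$ centred at $q$, and let $C(q),R(q)$ be the constants produced by Theorem~\ref{t:darboux}. The ratio $|z|/\|x\|^2$ is unbounded on $U$ (it blows up near the vertical line $\{x=0\}$), and the generic set of points to which Theorem~\ref{t:darboux} applies is the complement of a negligible set. Hence for every $N\in\N$ I can pick a generic $p_N=(x_N,z_N)\in U$, with all components $x_{N,j}\neq 0$, such that
\[
C(q)\,\frac{|z_N|\phantom{^2}}{\|x_N\|^2}+R(q)\;\geq\; N.
\]

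By Theorem~\ref{t:darboux} this choice gives $\lim_{\epsilon\to 0}\nu(\delta_\epsilon(p_N))\geq N$, so I can select $\epsilon_N>0$ small enough to ensure simultaneously that $\nu(\delta_{\epsilon_N}(p_N))\geq N$ and that $\delta_{\epsilon_N}(p_N)$ lies within (Riemannian) distance $1/N$ from $q$. The proximity condition is automatic once $\epsilon_N$ is small: the points $p_N$ are bounded and $\delta_{\epsilon_N}$ contracts to the origin of the chart as $\epsilon_N\to 0$. Setting $q_N:=\delta_{\epsilon_N}(p_N)$ therefore produces a sequence with $q_N\to q$ and $\nu(q_N)\geq N$, which is the desired conclusion.

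The only subtle point, and what I expect to be the main obstacle, is to verify that the geodesics counted by $\nu(\delta_{\epsilon_N}(p_N))$ are genuinely \emph{local} in the sense of the excerpt, i.e.\ have image contained in a single coordinate chart around $q$. This is already built into the proof of Theorem~\ref{t:limit}: the normal extremals counted there lift to a fixed compact set $K\subset T_{p_0}^*M$ of initial covectors for the $\epsilon$-blow-up structure, and via the rescaling of Proposition~\ref{p:blowup} the corresponding geodesics of $M$ have length of order $\epsilon_N$ and hence remain inside $U$ once $\epsilon_N$ is small enough. A routine diagonal argument — choosing $\epsilon_N$ to satisfy the three conditions (count $\geq N$, distance $<1/N$, geodesics inside $U$) — then completes the construction.
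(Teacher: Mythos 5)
Your argument is correct and follows essentially the same route as the paper's proof: pick generic (i.e.\ Sard-regular for $\hat{E}$) points $p_m$ in Darboux coordinates with $|z_m|/\|x_m\|^2$ arbitrarily large, invoke Theorem~\ref{t:darboux}, and then shrink with the dilations, choosing $\epsilon_m\to 0$ so that $q_m=\delta_{\epsilon_m}(p_m)\to q$ while $\nu(q_m)\to\infty$. Your closing observation about locality --- that the geodesics counted come from covectors in a fixed compact set $K$, so by Proposition~\ref{p:blowup} they have length $O(\epsilon_m)$ and stay inside the chart --- is a subtlety the paper leaves implicit, and you resolve it correctly.
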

\begin{proof}
In Darboux's coordinates in a neighbourhood $U$ of $q$, for every $m\in \mathbb{N}$ pick a point $p_m=(x_m,z_m)$ such that: 1) $p_m$ is a regular value of $\hat{E}$ and 2) $\frac{|z_m|\phantom{^2}}{\|x_m\|^2}\geq m$. The existence of such $p_m$ is guaranteed by Sard's Lemma. Consider now $\delta_\epsilon(p_m)$. 

If $p_m$ is regular value for $\hat{E}$, then $\hat\nu(p_m)$ is finite. Hence one can choose a fixed $U_m$ in the proof of Thm. 45 containing all geodesics arriving at $p_m$, and thus there exists $\varepsilon_m$ such that
\begin{equation}
\# \hat{E}^{-1}(p_m) = \# E_\varepsilon|_{U_m}^{-1} \leq \nu(\delta_\varepsilon(p_m)), \qquad \forall \varepsilon \leq \varepsilon_m.
\end{equation}
Notice that we can assume $\lim_{m \to +\infty} \epsilon_m=0$. Setting $q_m = \delta_{\varepsilon_m}(p_m)$ yields the statement.
\end{proof}


\bibliographystyle{abbrv}
\bibliography{biblio-enumerative}

\end{document}